\pdfoutput=1
\providecommand{\noopsort}[1]{} 

\documentclass{amsart}

\usepackage[mathscr]{eucal}
\usepackage{amssymb}
\usepackage[usenames,dvipsnames]{xcolor} 
\usepackage[normalem]{ulem}
\usepackage{amsthm}
\usepackage{bbold}
\usepackage{comment}
\usepackage{enumitem}
\usepackage{amsmath}
\usepackage{tikz-cd}

\usepackage[unicode]{hyperref} 
\hypersetup{
    colorlinks, linkcolor=Blue,
    citecolor=Blue, urlcolor=Blue,
    breaklinks=true
}
\usepackage[nameinlink,capitalise,noabbrev]{cleveref}

\overfullrule=1mm


\numberwithin{equation}{section}
\setcounter{tocdepth}{1}


\usepackage[all]{xy}
\xyoption{line}
\usepackage{graphicx}
\usepackage{mathtools}

\newtheorem{ThmAlpha}{Theorem}

\swapnumbers 

\newtheorem{Thm}[equation]{Theorem}
\newtheorem*{Thm*}{Theorem}
\newtheorem{Prop}[equation]{Proposition}
\newtheorem{Lem}[equation]{Lemma}
\newtheorem{Cor}[equation]{Corollary}

\theoremstyle{remark}
\newtheorem{Def}[equation]{Definition}
\newtheorem*{Def*}{Definition}
\newtheorem{Ter}[equation]{Terminology}
\newtheorem{Not}[equation]{Notation}
\newtheorem{Exa}[equation]{Example}
\newtheorem{Hyp}[equation]{Hypothesis}
\newtheorem{Rec}[equation]{Recollection}
\newtheorem{Rem}[equation]{Remark}

\tikzset{
    labelrotatebelow/.style={anchor=north, rotate=90, inner sep=1.0mm}
}
\tikzset{
    labelrotateabove/.style={anchor=south, rotate=90, inner sep=1.0mm}
}

\newcommand{\nc}{\newcommand}
\nc{\dmo}{\DeclareMathOperator}

\renewcommand{\emptyset}{\varnothing}
\nc{\Mid}{\,\big|\,}
\nc{\SET}[2]{\big\{\,#1\Mid#2\,\big\}} 

\dmo{\Ker}{Ker}
\dmo{\End}{End}
\dmo{\Hom}{Hom}
\dmo{\im}{im}
\nc{\inv}{^{-1}}

\nc{\hook}{\hookrightarrow}
\nc{\adjto}{\rightleftarrows}
\nc{\xra}{\xrightarrow}

\dmo{\gen}{gen}
\dmo{\cl}{cl}
\dmo{\loc}{loc}
\dmo{\con}{con}
\dmo{\stmod}{stmod}
\dmo{\StMod}{StMod}
\dmo{\Proj}{Proj}
\dmo{\SH}{SH}
\nc{\SHc}{{\SH^c}}
\nc{\SHp}{{\SH_{(p)}}}
\nc{\SHcp}{{\SH^c_{(p)}}}

\dmo{\Ann}{Ann}
\dmo{\WeakAss}{WeakAss}
\dmo{\Ass}{Ass}
\dmo{\modname}{mod}
\dmo{\Mod}{Mod}
\nc{\MMod}[1]{\Mod(#1)}
\nc{\mmod}[1]{\modname(#1)}
\dmo{\Der}{D}
\dmo{\Derqc}{D_{qc}}
\dmo{\Derb}{D^{b}}
\dmo{\Derperf}{D^{perf}}
\nc{\Rder}{\mathrm{R}}
\nc{\Lder}{\mathrm{L}}

\dmo{\cone}{cone}
\dmo{\supp}{supp}
\dmo{\Supp}{Supp}
\nc{\SuppBIK}{\Supp_{\textup{BIK}}}
\nc{\kos}[2]{{#1/\!\!/#2}}
\dmo{\Cosupp}{Cosupp}
\nc{\haux}{\mathrm{h}}
\nc{\Supph}{\Supp^\haux}
\nc{\Spch}{\Spc^\haux}
\dmo{\Spec}{Spec}
\dmo{\Spech}{Spec^h}
\dmo{\Spc}{Spc}

\nc{\ideal}[1]{\langle #1\rangle}
\dmo{\thick}{thick}
\nc{\thickt}[1]{\thick_\otimes\ideal{#1}}
\nc{\Loc}[1]{\operatorname{Loc}\ideal{#1}}
\nc{\Loco}[1]{\operatorname{Loc}_{\otimes}\hspace{-0.3ex}\ideal{#1}}
\nc{\Colocid}[1]{\operatorname{Colocid}\ideal{#1}}

\dmo{\rmH}{H}
\nc{\HFp}{{\rmH \hspace{-0.15em}\bbF_{\hspace{-0.1em}p}}}

\nc{\frakp}{\mathfrak{p}}
\nc{\frakq}{\mathfrak{q}}
\nc{\fraka}{\mathfrak{a}}
\nc{\frakb}{\mathfrak{b}}
\nc{\bbN}{\mathbb{N}}
\nc{\bbZ}{\mathbb{Z}}
\nc{\bbF}{\mathbb{F}}
\nc{\cal}[1]{\mathcal{#1}}
\nc{\cat}[1]{\mathscr{#1}}
\nc{\unit}{\mathbb{1}}

\nc{\ie}{{i.e.}, }
\nc{\cf}{{cf.~}}
\nc{\aka}{{a.\,k.\,a.}\ }
\nc{\eg}{{\sl e.\,g.}}
\nc{\loccit}{{\sl loc.\,cit.}\ }

\newcounter{enum-resume-hack}

\usepackage{todonotes}


\begin{document}

\title[Support theories for non-Noetherian tt-categories]{Support theories for non-Noetherian \\ tensor triangulated categories}

\author{Changhan Zou}
\date{August 7, 2025}

\address{Changhan Zou, Mathematics Department, UC Santa Cruz, 95064 CA, USA}
\email{czou3@ucsc.edu}
\urladdr{\href{https://people.ucsc.edu/~czou3}{https://people.ucsc.edu/$\sim$czou3}}

\begin{abstract}
We extend the support theory of Benson--Iyengar--Krause to the non-Noetherian setting by introducing a new notion of small support for modules. This enables us to prove that the stable module category of a finite group is canonically stratified by the action of the Tate cohomology ring, despite the fact that this ring is rarely Noetherian. In the tensor triangular context, we compare the support theory proposed by W. Sanders (which extends the Balmer--Favi support theory beyond the weakly Noetherian setting) with our generalized BIK support theory. When the Balmer spectrum is homeomorphic to the Zariski spectrum of the endomorphism ring of the unit, the two support theories coincide as do their associated theories of stratification. We also prove a negative result which states that the Balmer--Favi--Sanders support theory can only stratify categories whose spectra are weakly Noetherian. This provides additional justification for the weakly Noetherian hypothesis in the work of Barthel, Heard and B. Sanders. On the other hand, the detection property and the local-to-global principle remain interesting in the general setting.
\end{abstract}

\maketitle

\vspace{-3ex}
{
\hypersetup{linkcolor=black}
\tableofcontents
}
\vspace{-3ex}

\section{Introduction}
The fundamental theorem of tensor triangular geometry \cite{Balmer05a} unifies major classification theorems in algebraic geometry, modular representation theory, and stable homotopy theory \cite{Thomason97,BensonCarlsonRickard97,HopkinsSmith98}. The theorem states that the radical thick ideals of an essentially small tensor triangulated category are classified by the Thomason subsets of its Balmer spectrum via the universal theory of support. Such a category often arises as the subcategory of compact objects inside a bigger rigidly-compactly generated tensor triangulated category. Understanding these big tt-categories leads to the problem of classifying their localizing ideals via some theory of support for big (non-compact) objects. The first such classification theorem was obtained by Neeman \cite{Neeman92a}, who proved that for a commutative Noetherian ring~$A$, the usual cohomological support (defined by tensoring with the residue fields) induces a bijection
\[
\{\text{localizing subcategories of }\Der(A)\} \xra{\sim} \{\text{subsets of }\Spec(A)\}.
\]
Neeman \cite{Neeman00} also showed that such a classification can fail if $A$ is not Noetherian. He exhibited a truncated polynomial ring $A$ in infinitely many variables with the property that $\Der(A)$ has lots of localizing subcategories while $\Spec(A)$ consists of a single point. In fact, Dwyer and Palmieri \cite{DwyerPalmieri08} constructed examples of nontrivial tensor-nilpotent objects in $\Der(A)$. This demonstrates that the cohomological support need not even detect vanishing of objects if the ring is non-Noetherian.

Nevertheless, various authors have constructed support theories for big categories under certain Noetherian hypotheses and used them to prove analogous classification theorems in different subjects. The current paper aims to study these notions of support without making any Noetherian assumptions.
\[ \ast \ast \ast \]

In a series of papers \cite{BensonIyengarKrause08,BensonIyengarKrause11a,BensonIyengarKrause11b} Benson, Iyengar, and Krause developed a theory of support for objects in a compactly generated triangulated category $\cat T$ equipped with an action of a graded-commutative Noetherian graded ring $R$. The BIK support function $\SuppBIK$ assigns to every object $t \in \cat T$ a subset of the homogeneous Zariski spectrum $\Spech(R)$. Suppose that $\cat T$ is a rigidly-compactly generated tensor triangulated category. The BIK support induces a map
\[
\{\text{localizing ideals of }\cat T\} \xra{\SuppBIK} \{\text{subsets of }\SuppBIK(\cat T)\}
\]
where $\SuppBIK(\cat T)\subseteq \Spech(R)$ is the space of supports. The category $\cat T$ is said to be \emph{BIK-stratified by $R$} if this map is a bijection. A major application of this machinery is that the stable module category $\StMod(kG)$ of a finite group~$G$ (or more generally, a finite group scheme) over a field $k$ of positive characteristic is BIK-stratified by the group cohomology ring $H^*(G,k)$ (see \cite{BensonIyengarKrause11a,BensonIyengarKrausePevtsova18}).

Note that this construction of support relies on an auxiliary action by a Noetherian ring. Thanks to the monoidal structure, the graded endomorphism ring~$\End_{\cat T}^*(\unit)$ of the unit object canonically acts on $\cat T$ but it has no reason to be Noetherian in general. For instance, $\End_{\StMod(kG)}^*(\unit)$ is the Tate cohomology ring $\smash{\hat{H}^*(G,k)}$ which is usually non-Noetherian. This motivated the author to develop a BIK-style support theory without assuming that the ring acting on the category is Noetherian. A key ingredient of this theory is a new notion (\cref{def:supp}) of small support for modules over commutative rings that are not necessarily Noetherian. This notion makes use of weakly associated primes which behave particularly well for non-Noetherian rings. For example, we show that torsion modules can still be recognized from their supports; see \cref{lem:supptorsion}.
\[ \ast \ast \ast \]

In the tensor triangular world, Balmer and Favi \cite{BalmerFavi11} proposed a notion of support for rigidly-compactly generated tensor triangulated categories which takes values in the Balmer spectrum $\Spc(\cat T^c)$ of compact objects. The construction of the Balmer--Favi support requires $\Spc(\cat T^c)$ to be \emph{weakly Noetherian}, meaning that for every $\cat P\in\Spc(\cat T^c)$ there exist Thomason subsets $U$ and $V$ with $\{\cat P\}=U\cap V^c$. A point satisfying this condition is called \emph{weakly visible}. Based on this notion of support, Barthel, Heard and B. Sanders \cite{BarthelHeardSander23b} developed a theory of stratification which applies to any category $\cat T$ whose spectrum $\Spc(\cat T^c)$ is weakly Noetherian. In particular, $\cat T$ is said to be \emph{stratified} if the map induced by the Balmer--Favi support
\[
\{\text{localizing ideals of }\cat T\} \xra{\Supp_{\text{BF}}} \{\text{subsets of }\Spc(\cat T^c)\}
\]
is a bijection. Moreover, the Balmer--Favi support theory is the ``universal" one for stratification in the weakly Noetherian context; see \cite[Theorem~7.6]{BarthelHeardSander23b} for a precise statement.

However, there are tensor triangulated categories whose spectra are not weakly Noetherian. A prominent example is the stable homotopy category. Another example is the derived category of a polynomial ring in infinitely many variables. Therefore, a more general notion of support is needed.

W. Sanders \cite{BillySanders17pp} has proposed a generalization of the Balmer--Favi support theory which does not require the spectrum to be weakly Noetherian. We call this support the \emph{tensor triangular support}. A crucial feature of this theory is that the support of an object is a closed subset with respect to a certain topology on the Balmer spectrum called the \emph{localizing topology}. The localizing topology is generated by subsets of the form $U\cap V^c$, where $U$ and $V$ are Thomason subsets. Hence the Balmer spectrum is weakly Noetherian precisely when its localizing topology is discrete. We call subsets closed with respect to the localizing topology \emph{localizing closed}. Therefore, the following definition recovers stratification in the sense of \cite{BarthelHeardSander23b} when $\Spc(\cat T^c)$ is weakly Noetherian:
\begin{Def*}
A rigidly-compactly generated tensor triangulated category $\cat T$ is \emph{stratified} if the map induced by the tensor triangular support
\begin{equation}\label{eq:Supp-strat}
\{\text{localizing ideals of }\cat T\} \xra{\Supp} \{\text{localizing closed subsets of }\Spc(\cat T^c)\}
\end{equation}
is a bijection.
\end{Def*}
At this point, one may wonder if there is any stratified category with non-weakly Noetherian spectrum. Perhaps surprisingly, the answer is no:
\begin{ThmAlpha}[\cref{thm:stratified-weakly-noetherian}]\label{thm:b}
If a rigidly-compactly generated tensor triangulated category $\cat T$ is stratified then the Balmer spectrum $\Spc(\cat T^c)$ is weakly Noetherian.
\end{ThmAlpha}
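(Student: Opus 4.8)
The plan is to prove the contrapositive: if some point $\cat P\in\Spc(\cat T^c)$ is not weakly visible, then the tensor triangular support fails to be a bijection in~\eqref{eq:Supp-strat} and $\cat T$ is not stratified; this suffices because $\Spc(\cat T^c)$ is weakly Noetherian precisely when every point is weakly visible. I would first record two point-set facts about the localizing topology that refine the spectral ones: its basic opens $U\cap V^c$ (with $U,V$ Thomason) are in fact \emph{clopen}, and they separate the points of $\Spc(\cat T^c)$ --- given $\cat P\neq\cat Q$ one produces such a set containing $\cat P$ but not $\cat Q$ out of a quasi-compact open of the spectral space separating the two. Hence the localizing topology is Hausdorff and zero-dimensional; in particular every singleton $\{\cat P\}$ is localizing-closed and $\{\cat P\}=\bigcap\{\,U\cap V^c : U,V\text{ Thomason},\ \cat P\in U\setminus V\,\}$. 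Now assume for contradiction that $\cat T$ is stratified. Since $\{\cat P\}$ is localizing-closed, surjectivity of $\Supp$ gives a non-zero localizing ideal $\cat M$ with $\Supp(\cat M)=\{\cat P\}$; then any non-zero localizing subideal of $\cat M$ has support $\{\cat P\}$ as well, so by injectivity equals $\cat M$ --- that is, $\cat M$ is generated, as a localizing ideal, by any one of its non-zero objects. Next, for each Thomason pair $(U,V)$ with $\cat P\in U\setminus V$, the identities $\Supp(e_U)=U$, $\Supp(f_V)=V^c$ and multiplicativity of the tensor triangular support give $\Supp(\Loco{e_U\otimes f_V})=U\cap V^c$; moreover, for $0\neq t\in\cat M$ we have $\Supp(e_U\otimes f_V\otimes t)=(U\cap V^c)\cap\{\cat P\}=\{\cat P\}\neq\emptyset$, so $e_U\otimes f_V\otimes t\neq 0$ and hence $\cat M=\Loco{e_U\otimes f_V\otimes t}\subseteq\Loco{e_U\otimes f_V}$. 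Using $e_U\otimes f_V\otimes e_{U'}\otimes f_{V'}=e_{U\cap U'}\otimes f_{V\cup V'}$, the ideals $\Loco{e_U\otimes f_V}$ form a codirected family, and comparing supports with the help of injectivity shows $\cat M=\bigcap\{\,\Loco{e_U\otimes f_V} : \cat P\in U\setminus V\,\}$.

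The crux is then to show this codirected family has a \emph{least} member, equivalently that the codirected family of basic clopens $\{\,U\cap V^c : \cat P\in U\setminus V\,\}$ has a least element --- which is exactly weak visibility of $\cat P$. Suppose it did not. Then the strictly decreasing net of idempotents $e_U\otimes f_V$ has a homotopy limit, and I would argue --- exploiting the associated Milnor-type exact sequence together with the rigid-compact generation of $\cat T$ to carry the computation out inside the compact objects over a neighbourhood of $\cat P$ --- that this homotopy limit generates a localizing ideal strictly contained in every $\Loco{e_U\otimes f_V}$ and yet still having support $\{\cat P\}$, contradicting the injectivity of $\Supp$. Hence the net stabilizes, $\cat M=\Loco{e_{U_0}\otimes f_{V_0}}$ for a single Thomason pair, and taking supports gives $\{\cat P\}=U_0\cap V_0^c$, so $\cat P$ is weakly visible.

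I expect the genuine obstacle to be exactly this last step --- the stabilization of the codirected system of Rickard idempotents --- since it is here that the argument must use that $\cat T$ is an honest rigidly-compactly generated tensor triangulated category and not merely the lattice-theoretic shadow of one, for which the statement fails. In spirit one is showing that a failure of weak visibility always forces on $\cat T$ the Neeman / Dwyer--Palmieri pathology of a profusion of distinct localizing ideals concentrated at a single point; making this precise, and in particular controlling the homotopy limit of the $e_U\otimes f_V$, is the technical heart of the proof. Everything before it is formal bookkeeping with supports and with the localizing topology.
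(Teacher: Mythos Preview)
Your setup is sound through the point where you produce a nonzero localizing ideal $\cat M$ with $\Supp(\cat M)=\{\cat P\}$ and identify it with $\bigcap_{W\ni\cat P}\Loco{g_W}$. The gap is exactly where you say it is, and it is fatal as stated. Your proposed contradiction --- that the homotopy limit of the $g_W$ generates a localizing ideal ``strictly contained in every $\Loco{g_W}$ and yet still having support $\{\cat P\}$'' --- is not a contradiction at all: $\cat M$ itself already has both properties, so producing another such ideal gets you nowhere unless you can show it differs from $\cat M$, and stratification says it cannot. To extract a contradiction you would need either an object with empty support that is nonzero, or two distinct ideals with the same support; your sketch produces neither. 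There are further technical obstacles: the indexing poset of weakly visible neighbourhoods of $\cat P$ need not admit a cofinal sequence, so Milnor-type arguments are unavailable, and tensor does not commute with homotopy limits, so you cannot easily compute $\Supp$ of the limit. Finally, the Neeman and Dwyer--Palmieri examples are not analogous to this situation: their spectra are single points, hence trivially weakly Noetherian, so failure of weak visibility plays no role there.

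The paper's argument is entirely different and avoids any limit construction. It uses the homological spectrum $\Spch(\cat T^c)$ and the pure-injectives $E_{\cat B}$. Two facts are key: first, $\Supp(E_{\cat B})=\{\phi(\cat B)\}$ for every homological prime $\cat B$ (\cref{lem:Supp-EB}); second, stratification forces $\Supp(t)=\phi(\Supph(t))$ for every $t$ (\cref{prop:stratified-supph}). Since homological support is \emph{exactly} additive over coproducts --- $\Supph(\coprod_i t_i)=\bigcup_i\Supph(t_i)$ with no closure --- one can realise an arbitrary subset $W\subseteq\Spc(\cat T^c)$ as $\Supp\big(\coprod_{\cat P\in W}E_{\cat B_{\cat P}}\big)$, whence $W$ is localizing closed. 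The passage through $\Supph$ is precisely what circumvents the closure that appears in the coproduct formula for $\Supp$ and that your approach cannot get past.
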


Nonetheless, without the weakly Noetherian assumption, we can still define a notion of local-to-global principle (\cref{def:ltg}) which is a useful consequence of stratification. In \cite[Theorem~3.21]{BarthelHeardSander23b} it was shown that if $\Spc(\cat T^c)$ is Noetherian then $\cat T$ satisfies the local-to-global principle. We strengthen their result as follows:
\begin{ThmAlpha}[\cref{thm:Hochweakscatter}]\label{thm:c}
A rigidly-compactly generated tensor triangulated category $\cat T$ satisfies the local-to-global principle if the Balmer spectrum $\Spc(\cat T^c)$ is Hochster weakly scattered.
\end{ThmAlpha}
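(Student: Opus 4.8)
The plan is to prove the local-to-global principle by a transfinite induction along a Cantor--Bendixson-type filtration of the Hochster dual $\Spc(\cat{T}^c)^\vee$, whose open subsets are precisely the Thomason subsets of $\Spc(\cat{T}^c)$ and whose locally closed points are precisely the weakly visible ones.

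I would first assemble the standard formal machinery of smashing idempotent functors attached to Thomason subsets: for Thomason $V\subseteq U$ the functor $\Gamma_U L_V\cong(-)\otimes\Gamma_U\unit\otimes L_V\unit$; the localization triangle $\Gamma_V t\to\Gamma_U t\to\Gamma_U L_V t$; the excision relations, notably $\Gamma_U L_V\cong\Gamma_U L_{U\cap V}$ and the triangle relating $\Gamma_{U_1}$, $\Gamma_{U_1\cup U_2}$ and $\Gamma_{U_2}L_{U_1\cap U_2}$; the continuity $\Gamma_{\bigcup_\alpha U_\alpha}t\cong\operatorname{hocolim}_\alpha\Gamma_{U_\alpha}t$ along increasing chains; and the identification $\Gamma_U L_V=\Gamma_{\cat{P}}$ whenever $\{\cat{P}\}=U\cap V^c$. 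Each $\Gamma_{\cat{P}}t$ lies in $\Loco{t}$ automatically, so the content of the local-to-global principle is the reverse inclusion; I would prove the slightly stronger statement that $\Gamma_U L_V t\in\cal{L}:=\Loco{\Gamma_{\cat{P}}t\mid\cat{P}\ \text{weakly visible}}$ for all Thomason $V\subseteq U$, from which the case $V=\emptyset$, $U=\Spc(\cat{T}^c)$ yields $t=\Gamma_{\Spc(\cat{T}^c)}t\in\cal{L}$.

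Write $X=\Spc(\cat{T}^c)$ and let $X^{(0)}\supseteq X^{(1)}\supseteq X^{(2)}\supseteq\cdots$ be the filtration of $X^\vee$ obtained by repeatedly deleting, at each stage, all points that are locally closed in the current subspace, and intersecting at limit ordinals; the hypothesis that $X$ is Hochster weakly scattered says exactly that $X^{(\kappa)}=\emptyset$ for some ordinal $\kappa$. I would then prove, by transfinite induction on $\alpha\le\kappa$, that $\Gamma_U L_V t\in\cal{L}$ whenever $V\subseteq U$ are Thomason with $U\setminus V\subseteq X\setminus X^{(\alpha)}$. For $\alpha=0$ this forces $U\setminus V=\emptyset$, hence $\Gamma_U L_V t=\Gamma_U L_U t=0$. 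At a successor $\alpha+1$, the new layer $D:=X^{(\alpha)}\setminus X^{(\alpha+1)}$ consists of points locally closed in $X^{(\alpha)}$, hence weakly visible, each with a witnessing interval $\{\cat{P}\}=U_{\cat{P}}\cap V_{\cat{P}}^c$; peeling these off using the localization triangles above should exhibit $\Gamma_U L_V t$ as an iterated extension built from pieces $\Gamma_{\cat{P}}t$ with $\cat{P}\in D$ --- all lying in $\cal{L}$ --- together with a residual term $\Gamma_U L_{V'}t$ whose support is contained in $X\setminus X^{(\alpha)}$, hence in $\cal{L}$ by the inductive hypothesis. At a limit ordinal $\alpha$, writing $\Gamma_U L_V t$ as a homotopy colimit along the increasing chain of Thomason subsets manufactured by the earlier stages, together with the fact that $\cal{L}$ is a localizing subcategory, completes the induction.

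The hard part is the successor step, and specifically the ``peeling'' of the layer $D$: one must choose the witnessing intervals $U_{\cat{P}},V_{\cat{P}}$ for the points of $D$ coherently enough that the resulting enlargements of the localizing Thomason subset leave behind a term supported inside $X\setminus X^{(\alpha)}$, so that the inductive hypothesis can absorb it. Since $D$ in its subspace topology need not be discrete, and the sets $X^{(\alpha)}$ are in general neither Thomason nor co-Thomason, I expect this to require a secondary transfinite induction (or a Zorn-type argument) over a well-ordering of $D$, conducted purely in terms of Thomason pairs $(V,U)$ rather than of $\Gamma$ and $L$ applied to the $X^{(\alpha)}$ directly --- and this is precisely where the \emph{weak} scatteredness of the Hochster dual is consumed. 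By contrast, the limit-ordinal bookkeeping, the homotopy-colimit continuity, and the dictionary between the Balmer and Hochster topologies are routine.
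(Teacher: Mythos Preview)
Your approach has a genuine gap at its foundation, and the paper's proof proceeds quite differently.

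The problem is the claimed equivalence ``$X$ is Hochster weakly scattered $\Leftrightarrow$ the filtration $X^{(\alpha)}$ obtained by removing locally closed points terminates at $\emptyset$.'' Weak scatteredness of $X^\vee$ only guarantees that each nonempty \emph{Hochster-closed} subset $S$ of $X$ has a \emph{weakly isolated} point $x$: some Thomason $U$ satisfies $\{x\}\subseteq U\cap S\subseteq\overline{\{x\}}^{X^\vee}=\gen(x)$. This is strictly weaker than $x$ being isolated in $S$, and it does not force $x$ to be locally closed in $S$ (the slice $U\cap S$ may contain many proper Hochster-specializations of $x$, and no smaller open need cut $\{x\}$ out of $\overline{\{x\}}^S$). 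So even when $X^{(\alpha)}$ happens to be Hochster-closed, weak scatteredness does not hand you a locally closed point to delete. Worse, your subsequent claim that points of the layer $D$ are ``hence weakly visible'' in $X$ needs locally closed in $X^{(\alpha)}$ to imply locally closed in $X^\vee$, which is only clear when $X^{(\alpha)}$ is Hochster-closed --- a property you yourself note fails. If it held, every point of $X$ would be weakly visible and $X$ would be weakly Noetherian; but then Hochster weakly scattered would coincide with Hochster scattered (see \cref{rem:hoch-scatter}), and the theorem would not extend the Noetherian case at all. Without weakly visible $\cat P$ there is no idempotent $g_{\cat P}$, and the peeling step has nothing to peel.

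The paper's argument avoids filtrations entirely. Given a weakly visible cover $\{W_i\}$, set $\cat L:=\Loco{g_{W_i}\mid i\in I}$ and define the Thomason subset
\[
Y\ :=\ \bigcup_{a\in\cat L\cap\cat T^c}\supp(a),
\]
for which $e_Y\in\cat L$. If $Y\subsetneq\Spc(\cat T^c)$, a \emph{single} application of Hochster weak scatteredness to the Hochster-closed set $Y^c$ yields $\cat P\notin Y$ and a compact $b$ with $\cat P\in\supp(b)\cap Y^c\subseteq\gen(\cat P)$. Choosing $j$ with $\cat P\in W_j=U\cap V^c$ and shrinking $\supp(b)$ into $U$, one gets $\supp(b)\cap Y^c\subseteq W_j$, hence $e_{\supp(b)}\otimes f_Y\in\Loco{g_{W_j}}\subseteq\cat L$; together with $e_{\supp(b)}\otimes e_Y\in\cat L$ and the localization triangle this forces $e_{\supp(b)}\in\cat L$, so $\supp(b)\subseteq Y$, contradicting $\cat P\in\supp(b)\setminus Y$. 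The set $Y$ already encodes the maximal Thomason piece absorbed by $\cat L$, and the weakly isolated point is used as-is, never upgraded to a weakly visible one; no transfinite induction or secondary Zorn argument is needed.
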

The relation between Noetherian and Hochster weakly scattered spectral spaces can be depicted as follows (see \cref{rem:hoch-scatter}):
\[
\text{Noetherian} \implies \text{Hochster scattered} \iff
\begin{gathered}
\text{Hochster weakly scattered} \\ + \\ \text{weakly Noetherian}.
\end{gathered} 
\]

An immediate consequence of \cref{thm:b} is that the stable homotopy category is not stratified. Nevertheless, we can show that it satisfies the local-to-global principle and hence the detection property. As an application, we show that any spectrum supported on the line at height infinity in the chromatic picture must be dissonant (\cref{cor:dccbik}). Note that the Balmer--Favi support does not ``see" this point since it is not weakly visible.

In order to prove \cref{thm:b}, we study relations between the \emph{homological support} proposed by Balmer \cite{Balmer20_bigsupport} and the tensor triangular support, which were established in \cite{BarthelHeardSander23a} under the weakly Noetherian hypothesis. Our generalizations of the comparison results in \cite{BarthelHeardSander23a} may be of independent interest; see \cref{sec:strat}.
\[ \ast \ast \ast \]

For a rigidly-compactly generated tensor triangulated category $\cat T$, we now have the tensor triangular support $\Supp$ which takes values in the Balmer spectrum $\Spc(\cat T^c)$ and the canonical BIK support $\SuppBIK$ which takes values in the homogeneous Zariski spectrum $\Spech(\End_{\cat T}^*(\unit))$. Moreover, there is a comparison map
\[
\rho \colon \Spc(\cat T^c)\to\Spech(\End_{\cat T}^*(\unit))
\]
introduced in \cite{Balmer10a}. It is then natural to ask how the two support theories are related. We prove the following:
\begin{ThmAlpha}[\cref{thm:bikcomparison}]\label{thm:d}
Let $\cat T$ be a rigidly-compactly generated tensor triangulated category such that $\rho$ is a homeomorphism. Then $\rho(\Supp(t))=\SuppBIK(t)$ for any $t\in\cat T$.
\end{ThmAlpha}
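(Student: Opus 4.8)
The plan is to reduce the identity $\rho(\Supp(t))=\SuppBIK(t)$ to a comparison of Bousfield (co)localisation functors and then transport it across the homeomorphism. Write $R=\End_{\cat{T}}^*(\unit)$. Since $\rho$ is a homeomorphism it restricts to a bijection between the Thomason subsets of $\Spc(\cat{T}^c)$ and those of $\Spech(R)$; for a Thomason subset $Y$ I will write $e_Y\to\unit\to f_Y$ for the associated idempotent triangle in $\cat{T}$, and $\Gamma_W\unit\to\unit\to L_W\unit$ for the Benson--Iyengar--Krause torsion/localisation triangle attached to a specialisation-closed subset $W\subseteq\Spech(R)$. The strategy is: (1) show these idempotent triangles correspond under $\rho$; (2) feed this into the two definitions of support.

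\emph{Step 1 (idempotents agree).} For a finitely generated homogeneous ideal $\fraka=(f_1,\dots,f_n)\subseteq R$, the definition of the comparison map \cite{Balmer10a} gives $\supp(\cone(f_i))=\{\cat{P}\mid f_i\in\rho(\cat{P})\}=\rho\inv(V((f_i)))$, where $V(\frakb)=\{\frakq\mid\frakb\subseteq\frakq\}$; since the Balmer support of compact objects turns tensor products into intersections and $\kos{\unit}{\fraka}=\cone(f_1)\otimes\cdots\otimes\cone(f_n)$, we get $\supp(\kos{\unit}{\fraka})=\rho\inv(V(\fraka))$. By the classification theorem \cite{Balmer05a} (all thick $\otimes$-ideals of the rigid category $\cat{T}^c$ being radical), for every Thomason $Y$ the thick $\otimes$-ideal generated by $\{\kos{\unit}{\fraka}\mid \fraka\ \text{f.g.},\ V(\fraka)\subseteq\rho(Y)\}$ equals $\{x\in\cat{T}^c\mid\supp(x)\subseteq Y\}$. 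The localising $\otimes$-ideal it generates is, on the one hand, the one whose idempotent triangle is $e_Y\to\unit\to f_Y$ (by construction of the finite localisation associated with $Y$), and, on the other hand, it is the BIK torsion ideal $\Gamma_{\rho(Y)}\cat{T}$, since the Koszul objects $\kos{\unit}{\fraka}$ with $V(\fraka)\subseteq\rho(Y)$ generate it. Hence $e_Y\to\unit\to f_Y$ and $\Gamma_{\rho(Y)}\unit\to\unit\to L_{\rho(Y)}\unit$ are the same idempotent triangle, and since $e_Y\otimes(-)$, $f_Y\otimes(-)$, $\Gamma_W$, $L_W$ are all smashing, one obtains natural isomorphisms $e_Y\otimes t\cong\Gamma_{\rho(Y)}t$ and $f_Y\otimes t\cong L_{\rho(Y)}t$ for all $t\in\cat{T}$.

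\emph{Step 2 (supports agree).} By W.~Sanders' definition \cite{BillySanders17pp}, $\cat{P}\notin\Supp(t)$ exactly when there are Thomason subsets $U\ni\cat{P}$, $V\not\ni\cat{P}$ with $e_U\otimes f_V\otimes t=0$, which by Step 1 translates (with $\frakp=\rho(\cat{P})$) into: there are Thomason subsets $\frakU\ni\frakp$, $\frakV\not\ni\frakp$ of $\Spech(R)$ with $\Gamma_{\frakU}L_{\frakV}t=0$. It remains to match this with $\frakp\notin\SuppBIK(t)$, i.e.\ with the vanishing of the local cohomology object $\Gamma_\frakp t=\Gamma_{V(\frakp)}L_{\cal{Z}(\frakp)}t$ of \cref{thm:a}, where $\cal{Z}(\frakp)=\{\frakq\mid\frakq\not\subseteq\frakp\}$. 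Note that $\cal{Z}(\frakp)=\bigcup_{s\notin\frakp}V((s))$ is itself Thomason (a union of the basic Thomason subsets $V((s))$), whereas $V(\frakp)=\bigcap_{\fraka\subseteq\frakp,\,\text{f.g.}}V(\fraka)$ is a filtered intersection of Thomason subsets that is not Thomason unless $\frakp$ is weakly visible. One implication is formal: any Thomason subset containing $\frakp$ contains $V(\frakp)$ and any Thomason subset avoiding $\frakp$ lies in $\cal{Z}(\frakp)$, so the standard identities $L_{W_1}L_{W_2}=L_{W_1\cup W_2}$ and $\Gamma_{V(\frakp)}L_W=0$ (once $V(\frakp)\subseteq W$), applied to the triangle $\Gamma_{\frakU}L_{\frakV}t\to L_{\frakV}t\to L_{\frakU}L_{\frakV}t$, give $\Gamma_{V(\frakp)}L_{\cal{Z}(\frakp)}(\Gamma_{\frakU}L_{\frakV}t)\cong\Gamma_\frakp t$; hence $\Gamma_\frakp t\ne 0$ forces $\Gamma_{\frakU}L_{\frakV}t\ne 0$ for every admissible pair. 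For the converse I would take $\frakV=\cal{Z}(\frakp)$ and let $\frakU=V(\fraka)$ range over the cofinal filtered family of finitely generated subideals $\fraka\subseteq\frakp$, reducing the problem to descending the vanishing of $\Gamma_\frakp t=\Gamma_{V(\frakp)}L_{\cal{Z}(\frakp)}t$ to that of some $\Gamma_{V(\fraka)}L_{\cal{Z}(\frakp)}t$. This descent is where the Noetherian hypothesis is classically invoked; here it is replaced by the small-support calculus of \cref{def:supp}: the object $\Gamma_\frakp t$ is simultaneously $\frakp$-torsion and $\frakp$-local, so by \cref{lem:supplocaltorsion} its (non)vanishing is faithfully recorded by its small support, and the compatibility of small support with filtered limits and colimits of specialisation-closed subsets allows one to pass between the single point $\frakp$ and a genuine Thomason neighbourhood.

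I expect the converse implication of Step 2 --- reconciling the pointwise BIK recipe $\Gamma_{V(\frakp)}L_{\cal{Z}(\frakp)}$ with the Thomason-subset recipe underlying the tensor triangular support, in the absence of any Noetherian hypothesis --- to be the main obstacle, and the new notion of small support (via weakly associated primes) to be exactly what clears it; Step 1, by contrast, is essentially bookkeeping with Balmer's classification theorem once one observes $\supp(\kos{\unit}{\fraka})=\rho\inv(V(\fraka))$.
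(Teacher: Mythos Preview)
Your Step~1 is correct and matches the paper's \cref{rem:BIK-finite}: one identifies $\varGamma_{\cal V(\fraka)}L_{\cal Z(\frakp)}t$ with $e_{\rho\inv(\cal V(\fraka))}\otimes f_{\rho\inv(\cal Z(\frakp))}\otimes t$, and this is indeed bookkeeping with Balmer's classification once $\supp(\kos{\unit}{\fraka})=\rho\inv(\cal V(\fraka))$ is established.

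The gap is in Step~2, and it stems from using the wrong definition of $\SuppBIK$. You write that $\frakp\notin\SuppBIK(t)$ means the vanishing of $\Gamma_{V(\frakp)}L_{\cal Z(\frakp)}t$. That is the \emph{classical} Noetherian BIK definition, but it is \emph{not} the definition used in this paper. Here, $\SuppBIK$ is defined (\cref{def:SuppBIK}) so that $\frakp\notin\SuppBIK(t)$ precisely when $\varGamma_{\cal V(\fraka)}L_{\cal Z(\frakp)}t=0$ for some finitely generated $\fraka\subseteq\frakp$; equivalently (\cref{rem:suppbik}), when $\varGamma_{\cal V}L_{\cal Z}t=0$ for some Thomason subsets $\cal V,\cal Z$ with $\frakp\in\cal V\cap\cal Z^c$. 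But this is exactly the condition you yourself reached at the start of Step~2 after translating $\cat P\notin\Supp(t)$ through Step~1 and the homeomorphism~$\rho$. So the proof is already finished at that point: the ``main obstacle'' you identify---descending the vanishing of $\Gamma_{V(\frakp)}L_{\cal Z(\frakp)}t$ to some Thomason $\Gamma_{V(\fraka)}L_{\cal Z(\frakp)}t$---simply does not arise, because the non-Noetherian $\SuppBIK$ was deliberately defined to avoid it. The small-support calculus of \cref{sec:supp-module} plays no role in the proof of this theorem; its role in the paper is to make the BIK localisation functors well-behaved and to recover the classical definition in the Noetherian case, not to bridge the two support theories.

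In short: your Step~1 is the entire content of the proof, and once you recognise that your translated condition \emph{is} the paper's definition of $\SuppBIK$, Step~2 collapses to a tautology. The paper's proof of \cref{thm:bikcomparison}(b) is correspondingly short.
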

Inspired by \eqref{eq:Supp-strat}, we say that $\cat T$ is \emph{cohomologically stratified} if the map induced by the canonical BIK support
\[
\{\text{localizing ideals of }\cat T\} \xra{\SuppBIK} \{\text{closed subsets of }\SuppBIK(\cat T)\}
\]
is a bijection, where $\SuppBIK(\cat T)$ inherits the localizing topology on $\Spech(\End^*_{\cat T}(\unit))$. A corollary of \cref{thm:d} is that if the comparison map $\rho$ is a homeomorphism then $\cat T$ is stratified if and only if it is cohomologically stratified; see \cref{cor:coh-strat}. However, a category can be cohomologically stratified without $\rho$ being a homeomorphism. This is the case for the following example:
\begin{ThmAlpha}[\cref{thm:stmod}]\label{thm:e}
The stable module category $\StMod(kG)$ is cohomologically stratified.
\end{ThmAlpha}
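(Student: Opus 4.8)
The plan is to bootstrap from the theorem of Benson, Iyengar and Krause \cite{BensonIyengarKrause11a} that $\StMod(kG)$ is BIK-stratified by the group cohomology ring $H^*(G,k)$ with space of supports $\Proj H^*(G,k)$, and to transport this statement along the canonical graded ring map $\eta\colon H^*(G,k)\to\hat{H}^*(G,k)=\End_{\StMod(kG)}^*(\unit)$ through which the $H^*(G,k)$-action on $\StMod(kG)$ factors. (We may assume $p$ divides $|G|$, since otherwise $\StMod(kG)=0$.) The first step is to record that the canonical BIK support of \cref{thm:a} is natural in the acting ring: writing $\eta^\sharp\colon\Spech(\hat{H}^*(G,k))\to\Spech(H^*(G,k))$ for the induced map, one has $\eta^\sharp(\SuppBIK(t))=\supp_{H^*(G,k)}(t)$ for every $t\in\StMod(kG)$, with $\eta^\sharp$ injective on $\SuppBIK(t)$. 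This ought to fall out of the construction of $\SuppBIK$ via small supports and weakly associated primes, since restriction of scalars along $\eta$ intertwines the two families of torsion and localization functors; in particular $\SuppBIK(\StMod(kG))=\SuppBIK(\unit)$ is carried bijectively onto $\Proj H^*(G,k)$.

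The crux is then a computation of the homogeneous spectrum of the Tate cohomology ring. The map $\eta$ is an isomorphism in non-negative degrees, so all the subtlety is in the negative part $\hat{H}^{<0}(G,k)$. When $G$ does not have periodic cohomology (equivalently, the $p$-rank of $G$ is at least $2$), the product of any two elements of $\hat{H}^{<0}(G,k)$ vanishes by the work of Benson and Carlson on products in negative Tate cohomology; in particular $\hat{H}^{<0}(G,k)$ consists of nilpotent elements and so lies in the nilradical of $\hat{H}^*(G,k)$. It follows that every homogeneous prime $\frakp$ of $\hat{H}^*(G,k)$ contains $\hat{H}^{<0}(G,k)$ and hence is determined by the homogeneous prime $\frakp\cap H^*(G,k)$, while conversely every homogeneous prime of $H^*(G,k)$ arises this way; thus $\eta^\sharp$ is a homeomorphism $\Spech(\hat{H}^*(G,k))\xrightarrow{\sim}\Spech(H^*(G,k))$. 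When $G$ does have periodic cohomology, $\hat{H}^*(G,k)$ contains a unit in negative degree, and one checks directly that $\eta^\sharp$ identifies $\Spech(\hat{H}^*(G,k))$ with the finite discrete space $\Proj H^*(G,k)$. In either case $\Spech(\hat{H}^*(G,k))$ is homeomorphic to a Noetherian spectral space.

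To finish, recall that a Noetherian spectral space has discrete localizing topology, since each of its points is weakly visible. Hence by the previous step the localizing topology on $\Spech(\hat{H}^*(G,k))$ is discrete, and so is the topology it induces on the subspace $\SuppBIK(\StMod(kG))=\SuppBIK(\unit)$. Consequently the closed subsets of $\SuppBIK(\StMod(kG))$ in the localizing topology are exactly the arbitrary subsets of $\SuppBIK(\unit)$, which under $\eta^\sharp$ correspond bijectively to the arbitrary subsets of $\Proj H^*(G,k)$. Combining this with the naturality of the first paragraph and the classification of \cite{BensonIyengarKrause11a}, the map
\[
\{\text{localizing ideals of }\StMod(kG)\}\xra{\SuppBIK}\{\text{closed subsets of }\SuppBIK(\StMod(kG))\}
\]
is a bijection, that is, $\StMod(kG)$ is cohomologically stratified. (Note that $\eta^\sharp$ — equivalently Balmer's comparison map $\rho$ in this case — is typically not surjective, missing the point corresponding to the ideal of positive-degree elements, so $\rho$ is not a homeomorphism and \cref{thm:d} does not apply; the argument above sidesteps it.)

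The main obstacle is the spectral computation of the second paragraph: identifying $\Spech(\hat{H}^*(G,k))$ with a Noetherian spectral space even though $\hat{H}^*(G,k)$ is, in all non-periodic cases, non-Noetherian. This is precisely where the vanishing of products in negative Tate cohomology is needed, and where the periodic case has to be isolated and handled directly. By contrast, the naturality statement of the first paragraph, though indispensable, should be a formal consequence of the framework built for \cref{thm:a}.
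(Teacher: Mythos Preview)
Your strategy matches the paper's in its broad outline: split into the periodic ($p$-rank $1$) and non-periodic ($p$-rank $\ge 2$) cases, use nilpotence of the negative Tate cohomology (Benson--Krause/Benson--Carlson) to identify $\Spech(\hat{H}^*(G,k))$ with a Noetherian space in the non-periodic case, and then transport a known stratification result to the canonical BIK support. Where you diverge is in the choice of intermediary. You propose to compare $\SuppBIK$ (with respect to $\hat{H}^*(G,k)$) directly with the classical BIK support with respect to $H^*(G,k)$ via the change-of-rings map $\eta^\sharp$, and then invoke \cite{BensonIyengarKrause11a}. The paper instead compares $\SuppBIK$ with the tensor triangular support $\Supp$ via Balmer's comparison map $\rho$, and then invokes that $\StMod(kG)$ is stratified in the Balmer--Favi sense. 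Concretely, the paper shows (using \cref{rem:BIK-finite} and the Noetherianity of $\Spech(\hat{H}^*(G,k))$) that for every $\frakp=\rho(\cat P)\in\im\rho$ one has $\frakp\in\SuppBIK(t)\iff\cat P\in\Supp(t)$, and separately that the unique closed point $\mathfrak m$ lies outside $\SuppBIK(\unit)$.

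The gap in your write-up is the naturality claim of your first paragraph. You assert that ``restriction of scalars along $\eta$ intertwines the two families of torsion and localization functors'' and that this ``ought to fall out of the construction,'' but this is precisely where the work lies. The subcategories $\cat T_{\cal V}$ are defined via big supports of graded Hom-modules, and relating $\Supp_{H^*(G,k)}M$ to $\Supp_{\hat{H}^*(G,k)}M$ for an $\hat{H}^*(G,k)$-module $M$ under restriction of scalars is not formal: the relevant localizations invert different multiplicative sets. What actually makes the comparison go through is that both families of BIK (co)localizations are \emph{finite} (co)localizations on $\StMod(kG)$ associated to Thomason subsets of $\Spc(\stmod(kG))$ pulled back along $\rho$ (respectively $\eta^\sharp\circ\rho$); this is exactly the content of \cref{rem:BIK-finite} and \cref{prop:bikalgebraic}, and it is how the paper makes the identification precise. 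In particular, showing $\SuppBIK(\unit)=\im\rho$ (excluding the irrelevant point $\mathfrak m$) requires this tensor-triangular input and is not a formal consequence of the module-theoretic framework. Your route is viable, but to carry it out rigorously you would end up reproving the same identification the paper obtains via $\rho$ and $\Supp$.
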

\[ \ast \ast \ast \]

The paper is organized as follows. In \cref{sec:pre} we record some basic facts about spectral spaces. In particular, we discuss the localizing topology (and its relation with the constructible topology), which will be used throughout this paper. In \cref{sec:supp-module} we define the notion of (small) support for modules over (non-Noetherian) commutative rings. In \cref{sec:bik} we establish the non-Noetherian BIK support theory. In \cref{sec:supp} we study how the tensor triangular support behaves under geometric functors. In \cref{sec:detection} we prove that the detection property for the tensor triangular support is an algebraically local property. In \cref{sec:ltg} we introduce a local-to-global principle and prove \cref{thm:c}. In \cref{sec:strat} we establish \cref{thm:b}. Finally, we prove \cref{thm:d} and \cref{thm:e} in \cref{sec:comparison}.

\subsection*{Acknowledgements}
The author is grateful to Beren Sanders for inspiring discussions and his constant support. He thanks Paul Balmer for useful conversations and the organizers of the Oberwolfach workshop \emph{Tensor-Triangular Geometry and Interactions} for their invitation to present some part of this work.

\medskip
\section{Preliminaries on spectral spaces}\label{sec:pre}
We start by recalling some basic concepts concerning spectral spaces.

\begin{Def}
Let $X$ be a spectral space in the sense of \cite{DickmannSchwartzTressl19}. We call a subset of $X$ \emph{Thomason} if it is a union of closed subsets, each of which has quasi-compact complement. The Thomason subsets form the open subsets of a dual spectral topology on $X$ called the \emph{Hochster dual topology}.\footnote{The Hochster dual topology is called the \emph{inverse topology} in \cite{DickmannSchwartzTressl19}.} We write $X^*$ for $X$ equipped with the Hochster dual topology.
\end{Def}

\begin{Def}\label{def:weaklyvisible}
Let $X$ be a spectral space. A subset $W$ of $X$ is said to be \emph{weakly visible} if there exist Thomason subsets $U$ and $V$ such that $W=U\cap V^c$. In particular, we say a point~$x\in X$ is weakly visible if the singleton $\{x\}$ is weakly visible. The spectral space $X$ is said to be \emph{weakly Noetherian} if every point of $X$ is weakly visible.
\end{Def}

\begin{Exa}\label{exa:weaklynoe}
Every Noetherian spectral space and every profinite space is weakly Noetherian; see \cite[Remarks~2.2 and 2.4]{BarthelHeardSander23b}.
\end{Exa}

\begin{Rem}\label{rem:Thomason-closed}
Let $X$ be a spectral space. A Thomason subset of $X$ is a union of Thomason closed subsets of $X$. Indeed, this follows from \cite[Lemma~3.3]{Sanders13} which states that the Thomason closed subsets are precisely the closed subsets with quasi-compact complements. It also follows that the closure $\overline{\{y\}}$ of a point $y\in X$ is the intersection of all Thomason closed subsets containing $y$, since the quasi-compact open subsets form a basis for the spectral topology. Therefore
\[
\gen(x) \coloneqq \SET{y\in X}{x\in \overline{\{y\}}}
\]
is the complement of the largest Thomason subset not containing $x$. Given the discussion above, we see that a point $x\in X$ is weakly visible if and only if
\[
\{x\}=Z\cap\gen(x)
\]
for some Thomason closed subset $Z$.
\end{Rem}

\begin{Rem}
The notion of a weakly visible subset leads to the following definition introduced by W. Sanders \cite{BillySanders17pp}:
\end{Rem}

\begin{Def}\label{def:loc-top}
Let $X$ be a spectral space. The weakly visible subsets of $X$ form a basis of open subsets for a topology on $X$ called the \emph{localizing topology} of $X$. We write $X_{\loc}$ for $X$ equipped with the localizing topology, and for any subset $S$ of $X$ we write $\overline{S}^{\loc}$ for the closure of $S$ in $X_{\loc}$.
\end{Def}

\begin{Rem}\label{rem:loc-top}
Note that a spectral space $X$ is weakly Noetherian if and only if its localizing topology is discrete. We will call a subset of $X$ \emph{localizing closed} if it is closed with respect to the localizing topology. Thus, $X$ is weakly Noetherian if and only if every subset of $X$ is localizing closed.
\end{Rem}

\begin{Rem}\label{rem:loc-top-con-top}
Recall that the \emph{constructible topology} on a spectral space $X$ is the topology generated by the sets $U\cap V^c$ with $U$ and $V$ Thomason \emph{closed} subsets of $X$. We write $X_{\con}$ for $X$ equipped with the constructible topology. From the definitions we see that the localizing topology is finer than the constructible topology. Note that the constructible topology is discrete if and only if the spectral space is finite; see \cite[Example~1.3.12 and Theorem~1.3.14]{DickmannSchwartzTressl19}. Therefore, any infinite weakly Noetherian spectral space provides an example whose localizing topology is strictly finer than the constructible topology. An explicit example is:
\end{Rem}

\begin{Exa}
Let $S^+$ denote the one-point compactification of a discrete infinite space $S$. We denote the point at infinity by $\infty$. Note that the space $S^+$ is a profinite space. We now define a partial order on $S^+$ by
\[
s\le t \iff s=\infty \text{ or } s=t\in S.
\]
This is a spectral order and hence yields a Priestly space whose associated spectral space is denoted by $S_{\infty}$; see \cite[1.6.13]{DickmannSchwartzTressl19} for details. Note that the constructible topology on~$S_{\infty}$ coincides with the original topology on $S^+$. However, by \cite[1.6.15(iv) and (v)]{DickmannSchwartzTressl19} the localizing topology on~$S_{\infty}$ is discrete and is therefore strictly finer than the constructible topology on $S_{\infty}$.
\end{Exa}

\begin{Rem}
On the other hand, there are examples where the localizing and constructible topologies coincide:
\end{Rem}

\begin{Exa}
Consider the space $\bbN\cup\{\infty\}$ of extended natural numbers whose nonempty open subsets are of the form $[n,\infty]$ for $n\in\bbN$. This is a spectral space and we denote its Hochster dual by $X$. The space $X$ coincides with the Balmer spectrum $\Spc(\SHcp)$ of the $p$-local stable homotopy category $\SHp$; see \cite{HopkinsSmith98} and \cite[Corollary~9.5]{Balmer10a}. Since every Thomason subset of $X$ is closed, the localizing topology coincides with the constructible topology.
\end{Exa}

\begin{Rem}
In general, the localizing topology is not a spectral topology. In fact, the localizing topology is quasi-compact if and only if it coincides with the constructible topology. Indeed, this follows from the fact that a continuous surjection from a quasi-compact space to a Hausdorff space is a topological quotient, in light of the continuous bijection $X_{\loc}\to X_{\con}$.
\end{Rem}

\begin{Rem}
To conclude this section, we introduce a class of spectral spaces whose definition is somewhat technical but it turns out that some results which hold for Noetherian spectral spaces extend to this class of spaces; see \cref{thm:Hochweakscatter}.
\end{Rem}

\begin{Def}
Let $S$ be a subset of a topological space $X$. A point $x$ is an \emph{isolated point} of $S$ if there exists an open subset $U$ of $X$ such that $\{x\}=U\cap S$. More generally, a point $x$ is a \emph{weakly isolated point} of $S$ if there exists an open subset $U$ of $X$ such that $\{x\}\subseteq U\cap S\subseteq \overline{\{x\}}$. A topological space $X$ is said to be \emph{(weakly) scattered} if every nonempty closed subset of $X$ has a (weakly) isolated point. See \cite{NiefieldRosenthal87} for further discussion.
\end{Def}

\begin{Def}\label{def:hoch-scatter}
A spectral space $X$ is \emph{Hochster (weakly) scattered} if its Hochster dual $X^*$ is (weakly) scattered.
\end{Def}

\begin{Rem}\label{rem:hoch-scatter}
A spectral space is Hochster scattered if and only if it is weakly Noetherian and Hochster weakly scattered; see \cite[Lemma~7.16]{BillySanders17pp}. All Noetherian spectral spaces are Hochster scattered; see \cite[Lemma~7.17(1)]{BillySanders17pp}, for example.
\end{Rem}

\begin{Exa}
Let $R$ be a non-Noetherian absolutely flat commutative ring. The Zariski spectrum $\Spec(R)$ is not Noetherian by \cite[Lemma~3.6]{Stevenson14a}. Nevertheless, if $R$ is semi-artinian then $\Spec(R)$ is Hochster scattered. Indeed, since $\Spec(R)$ carries the constructible topology (that is, $\Spec(R)=\Spec(R)_{\con}$) and has Cantor-Bendixson rank (by the proof of \cite[Theorem~6.4]{Stevenson17}), it then follows from \cite[Lemma~7.17(2)]{BillySanders17pp} that $\Spec(R)$ is Hochster scattered.
\end{Exa}

\section{Small support for modules}\label{sec:supp-module}
We now introduce a notion of small support for graded modules over graded-commutative graded rings that are not necessarily Noetherian. Our definition uses weakly associated primes, which behave better than associated primes in the absence of the Noetherian assumption.

\begin{Not}
For this section, $R$ will denote a $\bbZ$-graded graded-commutative ring. Ideals and modules will always be graded. The abelian category of $R$-modules and degree-zero homomorphisms will be denoted $\MMod{R}$. We write $\Spech(R)$ for the homogeneous Zariski spectrum of $R$. It is a spectral space. For any subset $S$ of $\Spech(R)$, we write $\cl(S)$ for the specialization closure of $S$. Given any ideal $\fraka$ of~$R$ and any prime ideal $\frakp$ in $\Spech(R)$, we write $\cal V(\fraka)$ for the set of prime ideals containing $\fraka$ and $\gen(\frakp)$ for the generalization closure of~$\frakp$. The complement of $\gen(\frakp)$ is denoted by $\cal Z(\frakp)$. We refer the reader to \cite[Section~1.5]{BrunsHerzog98} and \cite[Section~2]{DellAmbrogioStevenson13} for more on graded commutative algebra.
\end{Not}

\begin{Def}
The \emph{big support} of an $R$-module $M$ is defined as
\[
\Supp_R M\coloneqq\SET{\frakp\in\Spech(R)}{M_{\frakp}\neq0}
\]
where $M_{\frakp}$ is the graded localization of~$M$ at $\frakp$. 
\end{Def}

\begin{Def}
Let $M$ be an $R$-module. A prime $\frakp\in\Spech(R)$ is said to be \emph{associated} to~$M$ if there exists a homogeneous element~$m$ in $M$ such that $\frakp=\Ann(m)$, the annihilator of~$m$. We denote the set of associated primes of $M$ by $\Ass(M)$. More generally, a prime $\frakp$ is said to be \emph{weakly associated} to $M$ if there exists a homogeneous element $m$ in $M$ such that $\frakp$ is minimal among the primes containing $\Ann(m)$. The set of weakly associated primes of $M$ is denoted by $\WeakAss(M)$.
\end{Def}

\begin{Lem}\label{lem:weakass}
Let $M$ be an $R$-module and $\frakp\in\Spech(R)$. The following hold:
\begin{enumerate}
\item $\Ass(M)\subseteq\WeakAss(M)\subseteq\Supp_R M$. If $R$ is Noetherian then we have $\Ass(M)=\WeakAss(M)$.
\item $M=0$ if and only if $\WeakAss(M)=\emptyset$.
\item $\WeakAss(M_{\frakp})=\WeakAss(M)\cap\gen(\frakp)$.
\item $\cl(\WeakAss(M))=\Supp_R M$.
\end{enumerate}
\end{Lem}

\begin{proof}
For parts (a), (b), and (c), the proofs in \cite[\href{https://stacks.math.columbia.edu/tag/0589}{Lemma 0589}, \href{https://stacks.math.columbia.edu/tag/058A}{Lemma 058A}, \href{https://stacks.math.columbia.edu/tag/0588}{Lemma 0588}, \href{https://stacks.math.columbia.edu/tag/05C9}{Lemma 05C9}]{stacks-project} carry over to the graded setting. Part (d) is a direct consequence of (b) and (c) since $\Supp_R M$ is always specialization closed. See also \cite[IV, Exercise~17, page 289]{Bourbaki_CommAlg1-7}.
\end{proof}

\begin{Rem}
When $R$ is not Noetherian, there can exist a nonzero $R$-module $M$ such that $\Ass(M)=\emptyset$; see \cite[\href{https://stacks.math.columbia.edu/tag/05BX}{Remark 05BX}]{stacks-project}, for example. We now define the (small) support for modules:
\end{Rem}

\begin{Def}\label{def:supp}
The \emph{support} of an $R$-module $M$ is the set
\[
\supp_R M\coloneqq\WeakAss(M).
\]
\end{Def}

\begin{Rem}
From \cref{lem:weakass}(b) we see that the support detects vanishing of a module:
\begin{equation}\label{eq:suppdetection}
\supp_R M=\emptyset \iff M=0.
\end{equation}
\end{Rem}

\begin{Rem}\label{rem:suppdiffer}
\cref{def:supp} is different from the small support defined in \cite{BensonIyengarKrause08} when $R$ is Noetherian. Assume that $R$ is Noetherian. We have $\supp_R M=\Ass(M)$ by \cref{lem:weakass}(a). In \cite[Section~2]{BensonIyengarKrause08} the authors defined the small support of a module $M$ by choosing a minimal injective resolution of $M$ and then invoking the structure theorem for injective modules over Noetherian rings. More precisely, their small support of $M$ is defined as
\[
\left\{\frakp\in\Spech(R)\left|
{\begin{gathered}
\exists i \ge 0 \colon I^i \text{ has a direct summand isomorphic to} \\
\text{a shifted copy of the injective hull $E(R/\frakp)$}
\end{gathered}}\right.\right\}
\]
where $I^*$ is any minimal injective resolution of $M$. By \cite[Theorem~3.6.3]{BrunsHerzog98} this is equal to $\bigcup_{i \ge 0}\Ass(I^i)$, which contains $\Ass(M)$ since $M$ is a submodule of $I^0$. Nevertheless, our notion of support still satisfies some of the properties derived in \cite[Section~2]{BensonIyengarKrause08}:
\end{Rem}

\begin{Lem}\label{lem:suppproperties}
Let $M$ be an $R$-module and $\cal V \subseteq \Spech(R)$ a specialization closed subset. The following hold:
\begin{enumerate}
\item $\supp_R M \subseteq \cl(\supp_R M)=\Supp_R M$.
\item $\supp_R M_{\frakp}=\supp_R M\cap\gen(\frakp)$.
\item $\supp_R M\subseteq\cal V\iff M_{\frakp}=0 \text{ for all }\frakp\notin\cal V$.
\end{enumerate}
\end{Lem}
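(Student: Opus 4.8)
The plan is to derive all three parts from the properties of weakly associated primes recorded in \cref{lem:weakass} together with \cref{def:supp}, handling part (b) first since the other two follow from it rather quickly. For part (b), the key observation is that localization is exact and takes a minimal injective resolution of $M$ over $R$ to a minimal injective resolution of $M_{\frakp}$ over $R_{\frakp}$; this is the graded analogue of the standard fact (\cite[Theorem~3.6.3 and the discussion around it]{BrunsHerzog98}), and the only subtlety is to check that minimality is preserved, which holds because an essential extension localizes to an essential extension and an injective graded $R_\frakp$-module. Granting this, if $I^*$ is a minimal injective resolution of $M$ then $(I^*)_\frakp$ is a minimal injective resolution of $M_\frakp$, so
\[
\supp_R M_{\frakp}=\bigcup_{i=0}^{\infty}\WeakAss\big((I^i)_{\frakp}\big)=\bigcup_{i=0}^{\infty}\big(\WeakAss(I^i)\cap\gen(\frakp)\big)=\supp_R M\cap\gen(\frakp),
\]
where the middle equality is \cref{lem:weakass}(c). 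One caveat: $\WeakAss$ on the right should be understood over $R_\frakp$, but since primes of $R_\frakp$ correspond to primes of $R$ contained in $\frakp$ and this correspondence is compatible with annihilators of homogeneous elements, the identification with a subset of $\Spech(R)$ is harmless.

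For part (c), the forward implication: if $\supp_R M\subseteq\cal U$ and $\frakq\notin\cal U$, then since $\cal U$ is specialization closed its complement is generalization closed, so $\gen(\frakq)\subseteq\cal U^c$, hence $\supp_R M\cap\gen(\frakq)=\emptyset$; by part (b) this says $\supp_R M_{\frakq}=\emptyset$, and then \eqref{eq:suppdetection} gives $M_{\frakq}=0$. Conversely, if $M_{\frakq}=0$ for all $\frakq\notin\cal U$, I want to show $\supp_R M\subseteq\cal U$. Take $\frakp\in\supp_R M$; if $\frakp\notin\cal U$ then $M_{\frakp}=0$, so $\supp_R M_{\frakp}=\emptyset$, but part (b) gives $\supp_R M_{\frakp}=\supp_R M\cap\gen(\frakp)\ni\frakp$, a contradiction. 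Hence $\frakp\in\cal U$.

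For part (a), the inclusion $\supp_R M\subseteq\cl(\supp_R M)$ is formal. For $\cl(\supp_R M)=\Supp_R M$: the containment $\supp_R M\subseteq\Supp_R M$ follows from \cref{lem:weakass}(a), since each $\WeakAss(I^i)\subseteq\Supp_R I^i\subseteq\Supp_R M$ when $\frakp\in\WeakAss(I^i)$—more carefully, $\frakp\in\supp_R M$ implies some homogeneous element of some $I^i$ is killed exactly at a prime below $\frakp$, forcing $(I^i)_\frakp\neq0$ and hence $M_\frakp\neq 0$ by exactness of localization applied to the resolution (the localized resolution computes $M_\frakp$, which is nonzero once any term is). Since $\Supp_R M$ is specialization closed, $\cl(\supp_R M)\subseteq\Supp_R M$. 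For the reverse, if $\frakp\in\Supp_R M$ then $M_\frakp\neq0$, so by \eqref{eq:suppdetection} and part (b), $\emptyset\neq\supp_R M_\frakp=\supp_R M\cap\gen(\frakp)$, which means $\supp_R M$ contains a generalization of $\frakp$, i.e.\ $\frakp\in\cl(\supp_R M)$. Finally $\Supp_R M\subseteq\cal V(\Ann M)$ because $M_\frakp\neq0$ forces $\Ann(M)\subseteq\frakp$ (a homogeneous element not in $\Ann M$ localizes to something annihilated only by $\frakp$-elements), which is routine graded commutative algebra.

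The main obstacle I expect is the graded localization statement underlying part (b): verifying that the graded localization of a minimal injective resolution is again minimal. This requires knowing that graded injective modules localize to graded injective modules and that minimality (i.e.\ each syzygy mapping essentially into the next injective) is preserved under localization in the graded setting. Both are in the literature in the ungraded case and the graded versions go through verbatim using \cite[Section~1.5]{BrunsHerzog98} and \cite[Section~2]{DellAmbrogioStevenson13}, but it is the one point where the argument is not purely formal bookkeeping with \cref{lem:weakass}.
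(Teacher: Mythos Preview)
Your proof is correct and rests on the same key technical input as the paper's: that the graded localization $I^*_\frakp$ of a minimal injective resolution of $M$ is again a minimal injective resolution of $M_\frakp$, after which everything reduces to the properties of weakly associated primes in \cref{lem:weakass}. The only difference is organizational: the paper proves part~(a) first via \cref{lem:weakass}(d), computing $\cl(\supp_R M)=\bigcup_i\cl(\WeakAss(I^i))=\bigcup_i\Supp_R I^i=\Supp_R M$ directly, and then reads off (c) from (a) in one line; you instead establish (b) first and then derive both (a) and (c) from it, never invoking \cref{lem:weakass}(d). Both routes are short and equivalent in substance.
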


\begin{proof}
The equality in (a) is just \cref{lem:weakass}(d) and the inclusion is clear. Part (b) is \cref{lem:weakass}(c). Part (c) follows immediately from (a).
\end{proof}

\begin{Def}
Let $\cal V$ be a specialization closed subset of $\Spech(R)$. Define the full subcategory
\[
\MMod{R}_{\cal V}\coloneqq\SET{M\in\MMod{R}}{\supp_R M\subseteq\cal V}.
\]
\end{Def}

\begin{Lem}\label{lem:modrv}
If $\cal V \subseteq \Spech(R)$ is specialization closed then $\MMod{R}_{\cal V}$ is a localizing Serre subcategory of $\MMod{R}$. That is, $\MMod{R}_{\cal V}$ is closed under coproducts, and for any exact sequence $0\to M'\to M\to M''\to 0$ of $R$-modules, $M$ is in $\MMod{R}_{\cal V}$ if and only if $M'$ and $M''$ are in $\MMod{R}_{\cal V}$.
\end{Lem}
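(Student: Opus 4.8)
The plan is to reduce the entire statement to the characterization of membership in $\MMod{R}_{\cal V}$ in terms of vanishing of localizations, and then invoke the two basic properties of graded localization: exactness and commutation with direct sums. Concretely, since $\cal V$ is specialization closed, \cref{lem:suppproperties}(c) tells us that an $R$-module $M$ lies in $\MMod{R}_{\cal V}$ if and only if $M_{\frakq}=0$ for every $\frakq\in\Spech(R)$ with $\frakq\notin\cal V$. So it suffices to show that the class of $R$-modules whose localization at every prime outside $\cal V$ vanishes is closed under arbitrary direct sums and satisfies the two-out-of-three property for short exact sequences.

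For direct sums, I would use that graded localization at a prime commutes with arbitrary coproducts, so $\big(\bigoplus_i M_i\big)_{\frakq}\cong\bigoplus_i (M_i)_{\frakq}$; if each $M_i\in\MMod{R}_{\cal V}$ then every $(M_i)_{\frakq}$ is zero for $\frakq\notin\cal V$, hence so is the coproduct, and therefore $\bigoplus_i M_i\in\MMod{R}_{\cal V}$. For a short exact sequence $0\to M'\to M\to M''\to 0$, exactness of graded localization gives an exact sequence $0\to M'_{\frakq}\to M_{\frakq}\to M''_{\frakq}\to 0$ for every $\frakq$, and for such a sequence one has $M_{\frakq}=0$ if and only if $M'_{\frakq}=0$ and $M''_{\frakq}=0$ (if $M_{\frakq}=0$, then $M'_{\frakq}$ is a submodule and $M''_{\frakq}$ a quotient of it, so both vanish; conversely $M'_{\frakq}=0$ forces $M_{\frakq}\cong M''_{\frakq}$, which then also vanishes if $M''_{\frakq}=0$). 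Ranging over all $\frakq\notin\cal V$ yields $M\in\MMod{R}_{\cal V}$ if and only if both $M'$ and $M''$ lie in $\MMod{R}_{\cal V}$, which is precisely closure under subobjects, quotients, and extensions.

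I do not expect a genuine obstacle here: once membership is rephrased through \cref{lem:suppproperties}(c), the argument is formal and uses only that graded localization is exact and preserves coproducts. The only point requiring a word of care is confirming that these standard properties of localization hold verbatim in the $\bbZ$-graded graded-commutative setting rather than the ungraded one, but this is covered by the references collected in the notation paragraph (\cite[Section~1.5]{BrunsHerzog98}, \cite[Section~2]{DellAmbrogioStevenson13}).
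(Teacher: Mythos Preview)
Your proposal is correct and is exactly the paper's argument: the paper's proof reads in full ``This follows from \cref{lem:suppproperties}(c) and the fact that the localization functor is exact and preserves direct sums.'' You have simply unpacked these two ingredients in detail.
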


\begin{proof}
This follows from \cref{lem:suppproperties}(c) and the fact that the localization functor at any prime $\frakp$ is exact and preserves coproducts.
\end{proof}

\begin{Def}
Let $M$ be an $R$-module and $\fraka$ an ideal of $R$. The module $M$ is said to be \emph{$\fraka$-torsion} if every element of $M$ is annihilated by a power of $\fraka$.
\end{Def}

\begin{Lem}\label{lem:supptorsion}
Let $M$ be an $R$-module and $\fraka$ an ideal of $R$. We have:
\begin{enumerate}
\item If $M$ is $\fraka$-torsion then $\supp_R M\subseteq\cal V(\fraka)$.
\item $\supp_R M\subseteq\cal V(\fraka)$ if and only if $M$ is $\frakb$-torsion for any finitely generated ideal $\frakb\subseteq \fraka$.
\end{enumerate}
\end{Lem}

\begin{proof}
For part (a), if $M$ is $\fraka$-torsion then we have $M_{\frakp}=0$ for every $\frakp\notin\cal V(\fraka)$ and thus $\supp_R M\subseteq\Supp_R M\subseteq \cal V(\fraka)$. For part (b), recall that by definition $\supp_R M=\WeakAss(M)$. Hence if $\supp_R M\subseteq\cal V(\fraka)$ then
\[
\fraka \subseteq \bigcap_{\frakp\in\WeakAss(M)}\frakp.
\]
Note that for any homogeneous element $m$ of $M$, we have
\[
\sqrt{\Ann(m)} = \bigcap_{\frakp \text{ minimal over }\Ann(m)} \frakp.
\]
By definition, if $\frakp$ is minimal over $\Ann(m)$ then $\frakp \in \WeakAss(M)$. Therefore
\[
\fraka \subseteq \bigcap_{\frakp\in\WeakAss(M)}\frakp \subseteq \bigcap_{\frakp \text{ minimal over }\Ann(m)} \frakp =\sqrt{\Ann(m)}.
\]
It follows that $M$ is $\frakb$-torsion for any finitely generated ideal $\frakb\subseteq\fraka$. The other direction follows from part (a).
\end{proof}

\begin{Rem}
We see from the lemma above that for a finitely generated ideal $\fraka$, an $R$-module $M$ is $\fraka$-torsion if and only if $\supp_R M \subseteq \cal V(\fraka)$. However, this does not always hold when $\fraka$ is not finitely generated. Indeed, in \cite{Rohrer19} the author studied two torsion functors for an ideal $\fraka$ of a commutative ring $R$ (in the ungraded setting): the \emph{small $\fraka$-torsion functor} $\varGamma_{\fraka}$ and the \emph{large $\fraka$-torsion functor} $\overline{\varGamma}_{\fraka}$, which are defined as
\[
\varGamma_{\fraka} M \coloneqq \SET{m\in M}{\fraka^n \subseteq \Ann(m) \text{ for some }n\in \bbN}
\]
and
\[
\overline{\varGamma}_{\fraka} M \coloneqq \SET{m\in M}{\fraka \subseteq \sqrt{\Ann(m)}}.
\]
Hence $M = \varGamma_{\fraka} M$ if and only if $M$ is $\fraka$-torsion. On the other hand, $M =  \overline{\varGamma}_{\fraka} M$ if and only if $\supp_R M \subseteq \cal V(\fraka)$; this follows from \cite[(3.3)(B)]{Rohrer19} and \cref{lem:suppproperties}(a). It is clear that $\varGamma_{\fraka}$ is a subfunctor of $\overline{\varGamma}_{\fraka}$ and that $\varGamma_{\fraka}=\overline{\varGamma}_{\fraka}$ if $\fraka$ is finitely generated. However, these two functors do not coincide in general; see \cite[Section~4]{Rohrer19}.
\end{Rem}

\section{Non-Noetherian BIK support}\label{sec:bik}
Benson, Iyengar, and Krause \cite{BensonIyengarKrause08} developed a theory of support for any compactly generated triangulated category equipped with a central action by a Noetherian graded-commutative graded ring. In this section we show how the Noetherian hypothesis can be removed.

\begin{Ter}
For the rest of the section, we fix a \emph{compactly generated $R$-linear triangulated category} $\cat T$. That is, a compactly generated triangulated category $\cat T$ equipped with a homomorphism of graded rings $R\to Z(\cat T)$ where $Z(\cat T)$ is the graded center of $\cat T$. The full subcategory of compact objects in $\cat T$ is denoted by $\cat T^c$. Given any two objects $x$ and $t$ in $\cat T$, the graded abelian group
\[
\Hom_{\cat T}^*(x,t) \coloneqq \coprod_{n\in\bbZ}\Hom_{\cat T}(x,\Sigma^n t)
\]
has a graded $Z(\cat T)$-module structure and hence is a graded module over $R$; see \cite[Section~4]{BensonIyengarKrause08} for further details.
\end{Ter}

\begin{Rem}
Recall that a localizing subcategory $\cat L$ of $\cat T$ is \emph{strictly localizing} if the inclusion $\cat L\hook \cat T$ admits a right adjoint. This is equivalent to $\cat L$ being the kernel of a Bousfield localization on $\cat T$; see \cite[Proposition~9.1.8]{Neeman01}, for example.
\end{Rem}

\begin{Lem}
Let $\cal V\subseteq\Spech(R)$ be specialization closed. The subcategory
\[
\cat T_{\cal V}\coloneqq\SET{t\in\cat T}{\supp_R \Hom_{\cat T}^*(x,t) \subseteq \cal V\text{ for each }x\in \cat T^c}\] 
is strictly localizing.
\end{Lem}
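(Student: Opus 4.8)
The goal is to show that $\cat T_{\cal V}$ is strictly localizing, i.e.\ that it is a localizing subcategory whose inclusion into $\cat T$ admits a right adjoint. The plan is to exhibit $\cat T_{\cal V}$ as the kernel of a Bousfield localization functor, built from the module-theoretic functor $F_{\cal V}$ (equivalently $\Rder F_{\cal V}$) studied in \cref{sec:supp-module}, and then invoke \cite[Proposition~9.1.8]{Neeman01}. First I would check that $\cat T_{\cal V}$ is a localizing subcategory: it is clearly closed under suspension, and since each $\Hom_{\cat T}^*(x,-)$ is a cohomological functor to $\MMod R$ that preserves coproducts (as $x$ is compact), closure under triangles and arbitrary coproducts follows from \cref{lem:modrv}, which says $\MMod R_{\cal V}$ is a localizing Serre subcategory. (For triangles one uses that if two of three terms in an exact triangle land in $\cat T_{\cal V}$, the long exact sequence in $\Hom_{\cat T}^*(x,-)$ together with the two-out-of-three property in the Serre subcategory $\MMod R_{\cal V}$ forces the third into $\cat T_{\cal V}$.)

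The substantive step is producing the right adjoint. The standard approach — the one BIK use in the Noetherian case — is via Brown representability / well-generatedness: $\cat T_{\cal V}$ is generated as a localizing subcategory by a \emph{set} of objects, hence is itself well-generated, hence strictly localizing by \cite[Proposition~8.4.2 / Theorem~8.4.4]{Neeman01} (a localizing subcategory of a well-generated category generated by a set is strictly localizing). So the key is to find a generating set. Here I would mimic BIK's construction: for each $\frakp \in \cal V$ and each compact generator $x$ (ranging over a set of compact generators of $\cat T$), and each cohomological degree, one builds ``Koszul-type'' or local-cohomology objects associated to $\frakp$ and $x$ that lie in $\cat T_{\cal V}$; alternatively, one can argue abstractly that $\cat T_{\cal V} = \bigcap_{x \in \cat T^c}(\text{kernel of } t \mapsto L_{\cal V}\Hom_{\cat T}^*(x,t))$ where $L_{\cal V}$ is the Bousfield localization on $\MMod R$ (or on $\Der(\MMod R)$) killing $\MMod R_{\cal V}$, so that $\cat T_{\cal V}$ is the kernel of a (set-indexed) family of coproduct-preserving homological functors to locally presentable abelian categories, and such kernels are always strictly localizing in a well-generated category. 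I would favor this abstract route since it avoids reconstructing the Koszul-object machinery.

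The main obstacle is verifying that $\cat T_{\cal V}$ is \emph{generated by a set}, which is what ultimately licenses the existence of the right adjoint; in the Noetherian setting this rests on the structure theory of injective modules (every injective is a coproduct of $E(R/\frakp)$'s), which is exactly what fails here. I expect to circumvent it using \cref{lem:exact} and \cref{rem:rfv-right-adjoint}: since $\Rder F_{\cal Z(\frakp)}$ gives a well-behaved colocalization on $\Der^+(\MMod R)$, and $F_{\cal V}$ is a right adjoint to the inclusion on the module level (\cref{rem:fv}), one transports this to $\cat T$ by defining, for $t \in \cat T$, a candidate $\Gamma_{\cal V} t$ via a homotopy-colimit/Bousfield construction over the compact generators and checking the universal property directly — the condition $\supp_R \Hom_{\cat T}^*(x, \Gamma_{\cal V} t) \subseteq \cal V$ and $\supp_R \Hom_{\cat T}^*(x, \mathrm{cofib}) \cap \cal V = \emptyset$ follow from \cref{lem:suppproperties}(c) applied degreewise. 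Once the localization triangle $\Gamma_{\cal V} t \to t \to L_{\cal V} t$ is in hand with $\Gamma_{\cal V} t \in \cat T_{\cal V}$ and $\Hom_{\cat T}(\cat T_{\cal V}, L_{\cal V} t) = 0$, Neeman's proposition finishes the proof.
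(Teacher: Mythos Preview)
Your ``abstract route'' (b) is correct and is exactly what the paper does. The paper's proof is a single line: \cref{lem:suppproperties}(c) generalizes \cite[Lemma~2.3(1)]{BensonIyengarKrause08} --- the equivalence $\supp_R M \subseteq \cal V \iff M_{\frakq}=0$ for all $\frakq\notin\cal V$ --- so the proof of \cite[Proposition~4.5]{BensonIyengarKrause08} carries over verbatim. That BIK argument is precisely your option (b): via \cref{lem:suppproperties}(c), $\cat T_{\cal V}$ is the common kernel of the coproduct-preserving homological functors $t \mapsto \Hom_{\cat T}^*(x,t)_{\frakq}$ as $x$ ranges over a set of compact generators and $\frakq$ over $\cal V^c$, and such a kernel in a compactly generated triangulated category admits a Bousfield localization by the standard machinery.

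Everything after ``I would favor this abstract route'' should be cut; it reflects a misdiagnosis. BIK's proof of their Proposition~4.5 does \emph{not} rest on the Matlis structure theory of injectives over Noetherian rings --- that is used elsewhere in \cite{BensonIyengarKrause08}, but for this particular statement the only input is their Lemma~2.3(1), which is exactly what \cref{lem:suppproperties}(c) supplies in the non-Noetherian setting. Consequently there is no obstacle to circumvent: you do not need to exhibit a generating set for $\cat T_{\cal V}$ by hand, nor to build $\varGamma_{\cal V}t$ directly via \cref{lem:exact} and homotopy colimits. The abstract route already delivers the right adjoint, and the paragraph of proposed workarounds is both unnecessary and would be substantially harder to make rigorous than the one-line argument it is meant to replace.
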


\begin{proof}
Note that $\cat T_{\cal V}$ is a localizing subcategory of $\cat T$ by \cref{lem:modrv}. The proof of \cite[Proposition~4.5]{BensonIyengarKrause08} then carries over verbatim, thanks to \cref{lem:suppproperties}(c).
\end{proof}

\begin{Rem}\label{rem:bik-exact-triangle}
For an object $t\in \cat T$ and a specialization closed subset $\cal V\subseteq\Spech(R)$, there exists (by the lemma above) a localization triangle
\[
\varGamma_{\cal V}t \to t \to L_{\cal V}t
\]
where $L_{\cal V}$ and $\varGamma_{\cal V}$ are the corresponding Bousfield localization and colocalization functor, respectively. We think of $\varGamma_{\cal V}t$ as the part of $t$ supported on $\cal V$ and $L_{\cal V}t$ as the part of $t$ supported away from $\cal V$. Recall from \cref{rem:suppdiffer} that our $\supp_R$ differs from the small support defined in \cite{BensonIyengarKrause08} when $R$ is Noetherian. Nevertheless, they have the same specialization closure, namely $\Supp_R$; see \cref{lem:suppproperties}(a) and \cite[Lemma~2.2(1)]{BensonIyengarKrause08}. Thus the category $\cat T_{\cal V}$ defined above is the same as the one defined in \cite[Lemma~4.3]{BensonIyengarKrause08}.
Therefore, the localization functor $L_{\cal V}$ is the same as the one in \cite[Definition~4.6]{BensonIyengarKrause08}. In particular, these localization functors satisfy the following composition rules:
\end{Rem}

\begin{Lem}\label{lem:localcomp}
Let $\cal V$ and $\cal W$ be specialization closed subsets of $\Spech(R)$. The following hold:
\begin{enumerate}
\item $\varGamma_{\cal V}\varGamma_{\cal W} \cong \varGamma_{\cal V \cap \cal W} \cong \varGamma_{\cal W}\varGamma_{\cal V}$.
\item $L_{\cal V}L_{\cal W} \cong L_{\cal V \cup \cal W} \cong L_{\cal W}L_{\cal V}$.
\item $\varGamma_{\cal V}L_{\cal W} \cong L_{\cal W}\varGamma_{\cal V}$.
\end{enumerate}
\end{Lem}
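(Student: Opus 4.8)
The plan is to obtain all three isomorphisms from the standard formalism of commuting Bousfield (co)localizations, whose only non‑formal input is a single compatibility statement for the functors $L_{\cal V},\varGamma_{\cal V}$. Recall the general principle: if $L$ and $L'$ are Bousfield localizations of a triangulated category with kernels $\cat A$, $\cat A'$ and colocalization functors $\varGamma$, $\varGamma'$, and if $L$ maps $\cat A'$ into itself and $L'$ maps $\cat A$ into itself, then $LL'\cong L'L$ is the localization with kernel $\Loc{\cat A\cup\cat A'}$, $\varGamma\varGamma'\cong\varGamma'\varGamma$ is the colocalization onto $\cat A\cap\cat A'$, and $L\varGamma'\cong\varGamma'L$. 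Quantifying over all specialization closed subsets, \cref{lem:localcomp} reduces to: (i) $\cat T_{\cal V}\cap\cat T_{\cal W}=\cat T_{\cal V\cap\cal W}$; (ii) $\Loc{\cat T_{\cal V}\cup\cat T_{\cal W}}=\cat T_{\cal V\cup\cal W}$; and — the crux — (iii) each $L_{\cal V}$ maps both $\cat T_{\cal W}$ and $\cat T_{\cal V\cup\cal W}$ into $\cat T_{\cal W}$.

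Point (i) is immediate from the definition of $\cat T_{\cal V}$ and the trivial fact that $\supp_R M\subseteq\cal V\cap\cal W$ if and only if $\supp_R M\subseteq\cal V$ and $\supp_R M\subseteq\cal W$. For (ii), monotonicity of $\supp_R$ gives $\cat T_{\cal V},\cat T_{\cal W}\subseteq\cat T_{\cal V\cup\cal W}$, hence one inclusion; for the other, given $t\in\cat T_{\cal V\cup\cal W}$ the localization triangle $\varGamma_{\cal V}t\to t\to L_{\cal V}t$ (\cref{rem:bik-exact-triangle}) has $\varGamma_{\cal V}t\in\cat T_{\cal V}$ and, by (iii), $L_{\cal V}t\in\cat T_{\cal W}$, so $t$ is an extension of an object of $\cat T_{\cal W}$ by one of $\cat T_{\cal V}$ and thus lies in $\Loc{\cat T_{\cal V}\cup\cat T_{\cal W}}$. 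For anyone preferring an explicit argument over the black box, (a) can be produced directly: apply $\varGamma_{\cal V}$ to $\varGamma_{\cal W}t\to t\to L_{\cal W}t$; by (iii) the object $\varGamma_{\cal V}\varGamma_{\cal W}t$ lies in $\cat T_{\cal V}\cap\cat T_{\cal W}=\cat T_{\cal V\cap\cal W}$; and since $\cat T_{\cal V}^{\perp}\cup\cat T_{\cal W}^{\perp}\subseteq\cat T_{\cal V\cap\cal W}^{\perp}$, the octahedral axiom shows the cone of $\varGamma_{\cal V}\varGamma_{\cal W}t\to t$ lies in $\cat T_{\cal V\cap\cal W}^{\perp}$, so the canonical map identifies $\varGamma_{\cal V}\varGamma_{\cal W}t$ with $\varGamma_{\cal V\cap\cal W}t$; part (b) is the dual argument, and (c) follows by assembling the two families of triangles into a $3\times 3$ diagram and reading off the corners from (a), (b).

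The main obstacle is (iii). In the Noetherian setting this is part of \cite[\S6]{BensonIyengarKrause08}, proved using the structure theorem for injective modules. In the present generality I would transcribe the arguments of \cite[\S\S5--6]{BensonIyengarKrause08}, replacing that structure theorem by the module‑theoretic results of \cref{sec:supp-module}. Concretely: for a single homogeneous element $x$ one still has the Koszul objects and $\varGamma_{\cal V(x)}$ as a homotopy colimit of them, so $\varGamma_{\cal V(x)}$ preserves any localizing subcategory automatically (these being closed under shifts, cones, and coproducts); the "local" functors $L_{\cal Z(\frakq)}$ can be shown directly — using \cref{lem:suppproperties} and \cref{lem:supplocaltorsion}(a) — to have $\frakq$-local $\Hom$-modules, whence $\Hom_{\cat T}^*(C,L_{\cal Z(\frakq)}Y)\cong\Hom_{\cat T}^*(C,Y)_{\frakq}$ for compact $C$; and the passage from these elementary pieces to an arbitrary specialization closed (in particular, possibly non‑Thomason) subset $\cal V$ — exactly the step at which the Noetherian proof invokes the injective decomposition — is carried by \cref{lem:exact}. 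Checking that these ingredients really suffice to run the Benson--Iyengar--Krause bootstrapping with no Noetherian input is the delicate part; once (iii) is in place, the rest is the formal localization yoga above.
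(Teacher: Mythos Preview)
The paper's proof is a one-line citation to \cite[Proposition~6.1]{BensonIyengarKrause08}. The point, made explicit in \cref{rem:bik-exact-triangle} just before the lemma, is that the subcategories $\cat T_{\cal V}$ are defined purely in terms of the \emph{big} support $\Supp_R$ of modules and therefore coincide literally with those of \cite{BensonIyengarKrause08}; hence the (co)localization functors are the same, and the composition identities of their Proposition~6.1 apply.

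Your route is genuinely different: rather than citing, you unpack the argument, correctly isolating the compatibility statement (iii) as the only non-formal input, and you propose to re-derive it using the small-support machinery of \cref{sec:supp-module} in place of the Noetherian structure theorem for injectives. This is a more self-contained account and makes explicit what the citation leaves implicit. Two parts of your sketch would need tightening, though. First, the assertion that $\varGamma_{\cal V(x)}$ is a homotopy colimit of Koszul constructions and hence preserves arbitrary localizing subcategories is immediate in the tensor-triangulated situation (where $\varGamma_{\cal V(x)}t\simeq e_{\cal V(x)}\otimes t$) but needs an argument in the bare $R$-linear setting of this section, where no tensor product is available. Second, \cref{lem:exact} is a statement about $\Der^{+}(\MMod{R})$, not about $\cat T$; how exactly it ``carries'' the passage to an arbitrary specialization closed $\cal V$ inside $\cat T$ is not spelled out and is not obvious. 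If the paper's implicit claim is right---that the proof of \cite[Proposition~6.1]{BensonIyengarKrause08} already only uses the big-support description of $\cat T_{\cal V}$ together with an elementary characterization of $\cat T_{\cal V}^{\perp}$, neither of which needs Noetherianness---then the reconstruction you outline is unnecessary for this particular lemma, and the \cref{sec:supp-module} tools only become essential later (e.g.\ for \cref{prop:bikalgebraic}).
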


\begin{proof}
See \cite[Proposition~6.1]{BensonIyengarKrause08}.
\end{proof}

\begin{Rem}
In \cref{rem:bik-exact-triangle} we have noted that the construction of the localization functors in \cite[Definition~4.6]{BensonIyengarKrause08} depends only on the notion of big support of modules, which does not require the ring $R$ to be Noetherian. However, to show that such a localization functor is a finite localization we want to have a more concrete description of the category $\cat T_{\cal V}$. For example, the objects in $\cat T_{\cal V(\fraka)}$ for a finitely generated ideal~$\fraka$ should be those $t\in\cat T$ with the property that $\Hom_{\cat T}^*(x,t)$ is $\fraka$-torsion for every $x\in\cat T^c$. This was established in \cite[Lemma~2.4(2)]{BensonIyengarKrause08} under the Noetherian hypothesis on the ring $R$. Thanks to \cref{lem:supptorsion}, this remains true for general commutative rings. Our next goal is to show that for any finitely generated ideal~$\fraka$ of $R$ and any prime ideal $\frakp\in\Spech(R)$ the localization functors corresponding to~$\cal V(\fraka)$ and $\cal Z(\frakp)$ are finite localizations, that is, the localizing subcategories $\cat T_{\cal V(\fraka)}$ and $\cat T_{\cal Z(\frakp)}$ are generated by compact objects in $\cat T$. Let us first recall the notion of Koszul objects.
\end{Rem}

\begin{Def}
Let $r\in R$ be a homogeneous element of degree $d$ and let $t$ be an object of $\cat T$. We denote by $\kos t{r}$ any object that fits into an exact triangle
\[
t \xra{r} \Sigma^d t \to \kos t{r}.
\]
This is called a \emph{Koszul object of $r$ on $t$}. Given a finite sequence $\underline{r}=(r_1,\ldots,r_n)$ of homogeneous elements, a \emph{Koszul object of $\underline{r}$ on $t$} is defined iteratively and denoted by $\kos t{\underline{r}}$. For a finitely generated ideal $\fraka$ of $R$, we write $\kos t{\fraka}$ for any Koszul object of any finite sequence of homogeneous generators for $\fraka$; see \cite[Definition~5.10]{BensonIyengarKrause08} for further discussion. A Koszul object $\kos t{\fraka}$ depends on the choice of generating sequence for the ideal $\fraka$. Nevertheless, the thick subcategory generated by $\kos t{\fraka}$ depends only on the radical of $\fraka$ by \cite[Lemma~3.4(2)]{BensonIyengarKrause11a}. Note also that $\kos t{\fraka}$ is compact if $t$ is compact.
\end{Def}

\begin{Lem}\label{lem:suppkos}
For every object $t \in \cat T$ and every finitely generated ideal $\fraka$ of $R$ we have $\kos t{\fraka}\in\cat T_{\cal V(\fraka)}$.
\end{Lem}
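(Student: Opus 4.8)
The plan is to reduce the claim to a computation with $\Hom$-modules and the behaviour of Koszul objects on cohomology, using the characterization of $\cat T_{\cal V(\fraka)}$ via torsion that was recorded in the remark preceding the lemma (namely, $t\in\cat T_{\cal V(\fraka)}$ iff $\Hom^*_{\cat T}(x,t)$ is $\fraka$-torsion for every $x\in\cat T^c$, which holds by \cref{lem:supplocaltorsion}(c) since $\fraka$ is finitely generated). So it suffices to show that for a single homogeneous $r\in R$ of degree $d$ and any $x\in\cat T^c$, the module $\Hom^*_{\cat T}(x,\kos t r)$ is annihilated by a power of $r$ — in fact by $r^2$ — and then iterate over a finite generating sequence for $\fraka$.

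First I would apply the cohomological functor $\Hom^*_{\cat T}(x,-)$ to the defining triangle $t\xra{r}\Sigma^d t\to \kos t r$, obtaining a long exact sequence of $R$-modules in which the maps $\Hom^*_{\cat T}(x,t)\to\Hom^*_{\cat T}(x,\Sigma^d t)$ and $\Hom^*_{\cat T}(x,\Sigma^d t)\to \Hom^*_{\cat T}(x,\Sigma^d\kos t r)$ are (up to the standard identification and sign) multiplication by $r$; this uses that $r$ acts centrally, i.e.\ through $R\to Z(\cat T)$, so that the map induced on $\Hom$ by $r\colon t\to\Sigma^d t$ agrees with the $R$-module action of $r$. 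From exactness, every element of $\Hom^*_{\cat T}(x,\kos t r)$ that maps to zero in $\Hom^*_{\cat T}(x,\Sigma^{d+1}t)$ comes from $\Hom^*_{\cat T}(x,\Sigma^d t)$ modulo the image of $r$; chasing the sequence shows $r\cdot\Hom^*_{\cat T}(x,\kos t r)$ lands in the image of $\Hom^*_{\cat T}(x,\Sigma^d t)\to\Hom^*_{\cat T}(x,\Sigma^d\kos t r)$, and multiplying once more by $r$ kills it since $r$ acts as zero on that image (it factors through $\Hom^*_{\cat T}(x,t)\xra{r}\Hom^*_{\cat T}(x,\Sigma^d t)$ followed by the next map, and two consecutive maps in the long exact sequence compose to zero). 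Hence $r^2\cdot\Hom^*_{\cat T}(x,\kos t r)=0$, so $\Hom^*_{\cat T}(x,\kos t r)$ is $\ideal{r}$-torsion, i.e.\ $\kos t r\in\cat T_{\cal V(\ideal r)}$ by \cref{lem:supplocaltorsion}(c).

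For the general finitely generated ideal $\fraka=\ideal{r_1,\dots,r_n}$, recall that $\kos t{\fraka}$ is built iteratively: $\kos t{(r_1,\dots,r_i)}=\kos{(\kos t{(r_1,\dots,r_{i-1})})}{r_i}$. Set $t_0=t$ and $t_i=\kos{t_{i-1}}{r_i}$. By the single-element case applied with $r=r_i$ and $t=t_{i-1}$, we get $r_i^2\cdot\Hom^*_{\cat T}(x,t_i)=0$ for every $x\in\cat T^c$. I would then argue inductively that $\Hom^*_{\cat T}(x,t_n)$ is annihilated by a suitable power of $\fraka$: at each step, the long exact sequence for $t_i\xra{r_i}\Sigma^{d_i}t_i\to t_{i+1}$ shows that any ideal annihilating $\Hom^*_{\cat T}(x,t_i)$ and $\Hom^*_{\cat T}(x,\Sigma^{d_i}t_i)$, together with $r_{i+1}^2$, annihilates $\Hom^*_{\cat T}(x,t_{i+1})$ after passing to a large enough power; concretely, if $\frakb^{N}$ kills $\Hom^*_{\cat T}(x,t_i)$ then $(\frakb+\ideal{r_{i+1}})^{N+2}$ kills $\Hom^*_{\cat T}(x,t_{i+1})$. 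Since each $r_i\in\fraka$, after $n$ steps some power $\fraka^{M}$ annihilates $\Hom^*_{\cat T}(x,\kos t{\fraka})$ for all compact $x$, so $\Hom^*_{\cat T}(x,\kos t{\fraka})$ is $\fraka$-torsion and $\kos t{\fraka}\in\cat T_{\cal V(\fraka)}$ by \cref{lem:supplocaltorsion}(c).

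The main obstacle is the bookkeeping in the inductive step — keeping track of exactly which power of which ideal annihilates the $\Hom$-module at each stage of the Koszul tower, and making sure the identification of the connecting maps with multiplication by $r_i$ is correctly normalized (including the degree shifts $d_i$ and signs coming from the graded-commutativity of $R$ and the suspensions). None of this is deep, but it requires care; alternatively one can sidestep the explicit exponents by noting that $\cat T_{\cal V(\fraka)}$ is a thick subcategory (indeed localizing) containing $\Sigma^{d}t_i$ whenever it contains $t_i$, so it suffices to know $r^2$-torsion at a single stage and then invoke \cref{lem:modrv} / the triangulated structure to propagate membership through the tower without chasing exponents at all.
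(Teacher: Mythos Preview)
Your proposal is correct and follows essentially the same approach as the paper: show that $\Hom^*_{\cat T}(x,\kos t{\fraka})$ is $\fraka$-torsion for every compact $x$ and then invoke \cref{lem:supplocaltorsion}. The only difference is that the paper simply cites \cite[Lemma~5.11(1)]{BensonIyengarKrause08} for the torsion claim (whose proof does not use Noetherianity), whereas you reprove that lemma by hand via the long exact sequence and induction on the generators.
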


\begin{proof}
By \cite[Lemma~5.11(1)]{BensonIyengarKrause08} the $R$-module $\Hom_{\cat T}^*(x,\kos t{\fraka})$ is $\fraka$-torsion for every~$x\in\cat T^c$. \cref{lem:supptorsion} then yields the desired result.
\end{proof}

\begin{Prop}\label{prop:bikalgebraic}
For any finitely generated ideal $\fraka$ of $R$ and any $\frakp\in \Spech(R)$, the categories $\cat T_{\cal V(\fraka)}$ and $\cat T_{\cal Z(\frakp)}$ are compactly generated subcategories of $\cat T$:
\begin{enumerate}
\item $\cat T_{\cal V(\fraka)}=\Loc{\kos x{\fraka}\mid x\in\cat T^c}$.
\item $\cat T_{\cal Z(\frakp)}=\Loc{\kos x{r}\mid x\in\cat T^c \text{ and } r\in R\setminus \frakp\text{ homogeneous}}.$
\end{enumerate}
\end{Prop}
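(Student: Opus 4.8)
The plan is to run the standard ``finite localization'' argument of Benson, Iyengar and Krause, feeding in the non-Noetherian results of \cref{sec:supp-module} where they used their Noetherian counterparts. In both cases write $\cat S$ for the localizing subcategory appearing on the right-hand side of the claimed equality. Since a Koszul object of a compact object is compact, $\cat S$ is generated by a set of compact objects of $\cat T$, hence is a smashing subcategory; so for every $t\in\cat T$ we obtain a localization triangle $\varGamma_{\cat S}t\to t\to L_{\cat S}t$ with $\varGamma_{\cat S}t\in\cat S$ and $L_{\cat S}t$ in the right orthogonal $\cat S^{\perp}$. The inclusion ``$\subseteq$'' is the easy one: in case (a) each $\kos x{\fraka}$ lies in $\cat T_{\cal V(\fraka)}$ by \cref{lem:suppkos}, in case (b) each $\kos x{r}$ with $r\notin\frakp$ lies in $\cat T_{\cal V(r)}\subseteq\cat T_{\cal Z(\frakp)}$ (note $\cal V(r)\subseteq\cal Z(\frakp)$ when $r\notin\frakp$), and $\cat T_{\cal V(\fraka)}$ and $\cat T_{\cal Z(\frakp)}$ are localizing.

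For the reverse inclusion it is enough to show that $\cat S^{\perp}\cap\cat T_{\cal V(\fraka)}=0$ in case (a) and $\cat S^{\perp}\cap\cat T_{\cal Z(\frakp)}=0$ in case (b). Indeed, if $t\in\cat T_{\cal V(\fraka)}$ then $\varGamma_{\cat S}t$, and hence $L_{\cat S}t$, lie in $\cat T_{\cal V(\fraka)}$; vanishing of the intersection then forces $L_{\cat S}t=0$, so $t\simeq\varGamma_{\cat S}t\in\cat S$, and likewise for $\cal Z(\frakp)$. So I would fix $s$ in the relevant intersection and prove $\Hom_{\cat T}^*(x,s)=0$ for every compact $x$, which gives $s=0$ as $\cat T$ is compactly generated. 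The one concrete computation needed is this: applying $\Hom_{\cat T}^*(-,s)$ to a Koszul triangle $y\xra{r}\Sigma^{|r|}y\to\kos y{r}$ yields a long exact sequence from which $\Hom_{\cat T}^*(\kos y{r},s)=0$ is seen to be equivalent to multiplication by $r$ being bijective on the graded module $\Hom_{\cat T}^*(y,s)$; iterating over a generating sequence handles $\kos x{\fraka}$.

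It remains to conclude in each case. In case (a), since $s\in\cat T_{\cal V(\fraka)}$ the module $\Hom_{\cat T}^*(x,s)$ is $\fraka$-torsion by \cref{lem:supplocaltorsion}(c); writing $\kos x{\fraka}$ as an iterated Koszul object on a homogeneous generating sequence of $\fraka$ and tracking the long exact sequences above shows, using that $\fraka$-torsion modules form a Serre subcategory (\cref{lem:modrv} together with \cref{lem:supplocaltorsion}(c)), that every intermediate graded module is again $\fraka$-torsion, and a downward induction converts $\Hom_{\cat T}^*(\kos x{\fraka},s)=0$ into $\Hom_{\cat T}^*(x,s)=0$ --- an element acting bijectively on a module torsion for that element kills it. In case (b), $s\in\cat S^{\perp}$ says every homogeneous $r\in R\setminus\frakp$ acts bijectively on $M\coloneqq\Hom_{\cat T}^*(x,s)$, so $M$ is $\frakp$-local (the homogeneous elements outside $\frakp$ being exactly those inverted by the graded localization at $\frakp$); hence $\supp_R M\subseteq\gen(\frakp)$ by \cref{lem:supplocaltorsion}(a), while $s\in\cat T_{\cal Z(\frakp)}$ gives $\supp_R M\subseteq\cal Z(\frakp)=\gen(\frakp)^{c}$, so $\supp_R M=\emptyset$ and $M=0$ by \eqref{eq:suppdetection}.

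I do not expect a serious obstacle here: once the smashing localization triangle is in hand, the argument is purely formal except for the elementary Koszul computation, and the only point where Noetherianity would have been invoked classically --- the description of $\fraka$-torsion and $\frakp$-local modules in terms of their support --- is exactly what \cref{lem:supplocaltorsion} provides without it. The part requiring the most care is the bookkeeping in case (a) for an iterated (rather than one-element) Koszul object.
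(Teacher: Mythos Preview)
Your proposal is correct and follows essentially the same approach as the paper: in both cases one shows $\cat S\subseteq\cat T_{\cal V}$, uses the compactly generated localization triangle, and reduces to proving $\cat S^{\perp}\cap\cat T_{\cal V}=0$ by analyzing the action of $R$ on $\Hom_{\cat T}^*(x,s)$ via Koszul exact sequences together with \cref{lem:supplocaltorsion}. The only cosmetic difference is in case~(b): the paper argues directly that $\Hom_{\cat T}^*(x,s)_{\frakp}=0$ and then kills each element using bijectivity of some $r\notin\frakp$, whereas you first deduce that $\Hom_{\cat T}^*(x,s)$ is $\frakp$-local and then intersect $\gen(\frakp)$ with $\cal Z(\frakp)$ via \cref{lem:supplocaltorsion}(a); these are two phrasings of the same vanishing.
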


\begin{proof}
By \cref{lem:suppkos} we have $\cat S\coloneqq\Loc{\kos x{\fraka}\mid x\in\cat T^c}\subseteq\cat T_{\cal V(\fraka)}$. Since $\cat S$ is a compactly generated subcategory of $\cat T_{\cal V(\fraka)}$, it is strictly localizing \cite[Proposition~9.1.19]{Neeman01}. Denoting the corresponding colocalization functor by $\varGamma$, we have for any $t\in\cat T_{\cal V(\fraka)}$ an exact triangle
\[
\varGamma t \to t \to s
\]
for some $s\in \cat S^{\perp}$. It remains to prove $s=0$. Let $r_1,\ldots,r_n$ be a sequence of homogeneous generators for $\fraka$. Note that for any $x\in\cat T^c$ we have
\[
\Hom_{\cat T}^*(\kos x{(r_1,\ldots,r_n)},s)=0.
\]
We claim
\[
\Hom_{\cat T}^*(\kos x{(r_1,\ldots,r_{n-1})},s)=0.
\]
Let $m\in\Hom_{\cat T}^*(\kos x{(r_1,\ldots,r_{n-1})},s)$. Since $s\in\cat T_{\cal V(\fraka)}=\bigcap_{i=1}^{n}\cat T_{\cal V(r_i)}\subseteq\cat T_{\cal V(r_n)}$, there exists a positive integer $k$ with $r_n^k m=0$ by \cref{lem:supptorsion}. Let $d$ be the degree of $r_n$. Applying $\Hom_{\cat T}^*(-,s)$ to the exact triangle
\[
\kos x{(r_1,\ldots,r_{n-1})} \\
\xra{r_n}\Sigma^d \kos x{(r_1,\ldots,r_{n-1})}\to \kos x{(r_1,\ldots,r_n)}
\]
yields an exact sequence
\begin{multline*}
0=\Hom_{\cat T}^*(\kos x{\fraka},s) \to \Hom_{\cat T}^*(\kos x{(r_1,\ldots,r_{n-1})},s)[-d] \\
\xra{r_n} \Hom_{\cat T}^*(\kos x{(r_1,\ldots,r_{n-1})},s) \to \Hom_{\cat T}^*(\kos x{\fraka},s)[-1]=0.
\end{multline*}
Thus multiplying by $r_n$ is an isomorphism, so $m=0$. The claim follows. An induction yields $\Hom_{\cat T}^*(x,s)=0$. This is true for all $x\in\cat T^c$. Therefore $s=0$, which establishes (a).

For part (b), set $\cat S\coloneqq\Loc{\kos x{r}\mid x\in\cat T^c\text{ and } r\in R\setminus \frakp\text{ is homogeneous}} \subseteq \cat T_{\cal Z(\frakp)}$. Similarly, for any $t\in\cat T_{\cal Z(\frakp)}$ we have an exact triangle
\[
\varGamma t \to t \to s
\]
with $s\in \cat S^{\perp}$, where $\varGamma$ is the corresponding colocalization functor. It remains to show that $s=0$. Since $s\in\cat T_{\cal Z(\frakp)}$, we have $\supp_R\Hom_{\cat T}^*(x,s)\subseteq\cal Z(\frakp)$ and hence $\Hom_{\cat T}^*(x,s)_{\frakp}=0$ by \cref{lem:suppproperties}(b) and \eqref{eq:suppdetection}, for every $x\in\cat T^c$. It follows that for any homogeneous element $m\in\Hom_{\cat T}^*(x,s)$ there exists some homogeneous element~$r\notin\frakp$ with $rm=0$. Let $d$ be the degree of $r$. The exact sequence
\[
0=\Hom_{\cat T}^*(\kos x{r},s)[-1] \to \Hom_{\cat T}^*(x,s)[-d] \xra{r} \Hom_{\cat T}^*(x,s) \to \Hom_{\cat T}^*(\kos x{r},s)=0
\]
implies that $m=0$. Therefore $s=0$, which completes the proof.
\end{proof}

\begin{Rem}
Now we are ready to define the BIK support for objects in $\cat T$.
\end{Rem}

\begin{Def}\label{def:SuppBIK}
The \emph{BIK support} of an object $t$ in $\cat T$ is defined as
\[
\SuppBIK(t)\coloneqq\left\{\frakp\in\Spech(R)\left|
{\begin{gathered}
\varGamma_{\cal V(\fraka)}L_{\cal Z(\frakp)}t\neq0\text{ for any finitely} \\
\text{ generated ideal }\fraka\text{ contained in }\frakp
\end{gathered}}\right.\right\}.
\]
\end{Def}

\begin{Rem}
If $\frakp\in\Spech(R)$ is finitely generated then \cref{lem:localcomp}(a) implies that $\SuppBIK(t) = \SET{ \frakp \in \Spech(R)}{\varGamma_{\cal V(\frakp)} L_{\cal Z(\frakp)} t \neq 0}$. Therefore the support $\SuppBIK$ recovers the one in \cite{BensonIyengarKrause08} when $R$ is Noetherian.
\end{Rem}

\begin{Rem}
We give the following description of the BIK support which is more flexible.
\end{Rem}

\begin{Prop}\label{prop:suppbikdef}
We have
\[
\SuppBIK(t) = \left\{\frakp\in\Spech(R) \left|
{\begin{gathered}
\varGamma_{\cal V}L_{\cal Z}t\neq0\text{ for any Thomason} \\
\text{ subsets }\cal V,\cal Z\text{ such that }\frakp\in\cal V\cap\cal Z^c
\end{gathered}}\right.\right\}
\]
for any $t\in\cat T$.
\end{Prop}

\begin{proof}
Recall from \cref{rem:Thomason-closed} that a Thomason subset of a spectral space is a union of Thomason closed subsets and that $\cal Z(\frakp)=\gen(\frakp)^c$ is the largest Thomason subset not containing $\frakp$. Moreover, the Thomason closed subsets of $\Spech(R)$ are exactly subsets of the form $\cal V(\fraka)$ for some finitely generated ideal $\fraka$. Therefore the result follows from \cref{lem:localcomp}.
\end{proof}

\begin{Not}
For $S \subseteq \Spech(R)$ we write $\min S$ for the set of prime~ideals in $S$ that are minimal (with respect to inclusion) among the prime ideals in $S$.
\end{Not}

\begin{Thm}\label{thm:suppbik}
For any $t\in\cat T$ we have
\[
\bigcup_{x\in\cat T^c} \min \supp_R\Hom_{\cat T}^*(x,t) \subseteq \SuppBIK(t) \subseteq \bigcup_{x\in \cat G} \Supp_R \Hom_{\cat T}^*(x,t)
\]
where $\cat G$ is any set of compact generators for $\cat T$.
\end{Thm}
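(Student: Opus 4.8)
The plan is to prove the two inclusions separately, after first replacing the generating set $\cat G$ on the right‑hand side by all of $\cat T^c$. For this preliminary step I would note that $\Supp_R\Hom^*_{\cat T}(-,t)$ sends finite direct sums to unions and, by applying the exact localizations $(-)_{\frakq}$ to the long exact sequence arising from $\Hom^*_{\cat T}(-,t)$, sends the middle object of an exact triangle into the union of the supports of the outer two. Hence $\SET{x\in\cat T^c}{\Supp_R\Hom^*_{\cat T}(x,t)\subseteq\bigcup_{g\in\cat G}\Supp_R\Hom^*_{\cat T}(g,t)}$ is a thick subcategory of $\cat T^c$; since it contains $\cat G$ and $\cat T^c$ is the thick subcategory of $\cat T$ generated by the compact generators $\cat G$, it is all of $\cat T^c$. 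Thus $\bigcup_{x\in\cat T^c}\Supp_R\Hom^*_{\cat T}(x,t)=\bigcup_{g\in\cat G}\Supp_R\Hom^*_{\cat T}(g,t)$, and it suffices to prove
\[
\bigcup_{x\in\cat T^c}\min\Hom^*_{\cat T}(x,t)\subseteq\SuppBIK(t)\subseteq\bigcup_{x\in\cat T^c}\Supp_R\Hom^*_{\cat T}(x,t).
\]

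For the right inclusion I would argue by contraposition. If $\Hom^*_{\cat T}(x,t)_{\frakp}=0$ for every $x\in\cat T^c$, then by \cref{lem:suppproperties}(b) and \eqref{eq:suppdetection} we have $\supp_R\Hom^*_{\cat T}(x,t)\subseteq\gen(\frakp)^c=\cal Z(\frakp)$ for every compact $x$, i.e.\ $t\in\cat T_{\cal Z(\frakp)}$, i.e.\ $L_{\cal Z(\frakp)}t=0$. Taking $\fraka=(0)$ in \cref{def:SuppBIK} — a finitely generated ideal contained in $\frakp$ with $\cal V(\fraka)=\Spech(R)$, so that $\varGamma_{\cal V(\fraka)}=\mathrm{id}$ — gives $\varGamma_{\cal V(\fraka)}L_{\cal Z(\frakp)}t=0$, whence $\frakp\notin\SuppBIK(t)$.

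The left inclusion carries the real content. Fix a compact object $x$, a prime $\frakp\in\min\Hom^*_{\cat T}(x,t)$, write $M\coloneqq\Hom^*_{\cat T}(x,t)$, and fix a finitely generated ideal $\fraka\subseteq\frakp$; the goal is $\varGamma_{\cal V(\fraka)}L_{\cal Z(\frakp)}t\neq0$. Minimality of $\frakp$ gives $\supp_R M\cap\gen(\frakp)=\{\frakp\}$, hence $\supp_R M_{\frakp}=\{\frakp\}$ by \cref{lem:suppproperties}(b); so $M_{\frakp}\neq0$, and since $\{\frakp\}\subseteq\cal V(\fraka)$, \cref{lem:supplocaltorsion}(c) shows $M_{\frakp}$ is $\fraka$-torsion. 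Now $\kos x{\fraka}$ is compact and lies in $\cat T_{\cal V(\fraka)}$ by \cref{lem:suppkos}, and the third term of the localization triangle $\varGamma_{\cal V(\fraka)}L_{\cal Z(\frakp)}t\to L_{\cal Z(\frakp)}t\to L_{\cal V(\fraka)}L_{\cal Z(\frakp)}t$ is right‑orthogonal to $\cat T_{\cal V(\fraka)}$, so $\Hom^*_{\cat T}(\kos x{\fraka},\varGamma_{\cal V(\fraka)}L_{\cal Z(\frakp)}t)\cong\Hom^*_{\cat T}(\kos x{\fraka},L_{\cal Z(\frakp)}t)$; it therefore suffices to show this module — equivalently, its localization at $\frakp$ — is nonzero. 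Applying $\Hom^*_{\cat T}(\kos x{\fraka},-)$ to the triangle $\varGamma_{\cal Z(\frakp)}t\to t\to L_{\cal Z(\frakp)}t$ and using that $\Hom^*_{\cat T}(\kos x{\fraka},\varGamma_{\cal Z(\frakp)}t)$ is supported in $\cal Z(\frakp)=\gen(\frakp)^c$, hence vanishes after localizing at $\frakp$, yields $\Hom^*_{\cat T}(\kos x{\fraka},L_{\cal Z(\frakp)}t)_{\frakp}\cong\Hom^*_{\cat T}(\kos x{\fraka},t)_{\frakp}$. Finally, unwinding the iterated exact triangles defining $\kos x{\fraka}$ and applying $\Hom^*_{\cat T}(-,t)$ presents $\Hom^*_{\cat T}(\kos x{\fraka},t)$ as obtained from $M$ by an $n$-fold iteration attached to a homogeneous generating sequence $r_1,\dots,r_n$ of $\fraka$, the $i$-th step replacing a module $N$ by a module $E$ that fits, up to degree shifts, in a short exact sequence $0\to N/r_iN\to E\to\ker(r_i\colon N\to N)\to0$, where $\ker(r_i\colon N\to N)$ denotes $\SET{m\in N}{r_im=0}$. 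Localizing at $\frakp$ and using exactness, one checks by induction — starting from the nonzero $\fraka$-torsion module $M_{\frakp}$ — that every $E$ produced is again nonzero and $\fraka$-torsion: $\fraka$-torsion because $E$ is an extension of two (shifted) subquotients of the $\fraka$-torsion module $N$ and $\fraka$ is finitely generated; nonzero because $\ker(r_i\colon N\to N)\neq0$ for any nonzero $N$ on which $r_i$ acts locally nilpotently, so that a nonzero quotient of $E$ is nonzero and hence $E\neq0$. Therefore $\Hom^*_{\cat T}(\kos x{\fraka},t)_{\frakp}\neq0$, so $\varGamma_{\cal V(\fraka)}L_{\cal Z(\frakp)}t\neq0$, i.e.\ $\frakp\in\SuppBIK(t)$.

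I expect the main obstacle to be this last inclusion: the nonvanishing of $\varGamma_{\cal V(\fraka)}L_{\cal Z(\frakp)}t$ cannot be detected in one stroke, and the right device is to probe it with the compact Koszul object $\kos x{\fraka}$ and then localize at $\frakp$ — which trivializes the colocalized part coming from $\varGamma_{\cal Z(\frakp)}$ — thereby reducing matters to the elementary algebraic fact that a Koszul complex on a finite generating sequence of $\fraka$ does not vanish on a nonzero $\fraka$-torsion module. The minimality hypothesis is used precisely to guarantee that $M_{\frakp}$ is $\fraka$-torsion for every finitely generated $\fraka\subseteq\frakp$. By contrast, the right inclusion is soft, because its hypothesis forces $t$ itself into $\cat T_{\cal Z(\frakp)}$, so no localization argument is needed there.
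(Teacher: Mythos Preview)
Your proof is correct and follows essentially the same approach as the paper's: both reduce the right inclusion to $t\in\cat T_{\cal Z(\frakp)}$ (the paper does this directly via the equivalence $\frakp\notin\bigcup_{x\in\cat G}\Supp_R\Hom^*_{\cat T}(x,t)\iff t\in\cat T_{\cal Z(\frakp)}$, which amounts to your explicit thick-subcategory argument), and for the left inclusion both probe $\varGamma_{\cal V(\fraka)}L_{\cal Z(\frakp)}t$ with the compact Koszul object $\kos x{\fraka}$, localize at $\frakp$ to kill the $\varGamma_{\cal Z(\frakp)}$-part, and run the same induction on the Koszul long exact sequence using that $M_{\frakp}$ is torsion with respect to every homogeneous element of $\frakp$. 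The only differences are cosmetic: the order of the two reduction isomorphisms is swapped, and you phrase the Koszul induction via the associated short exact sequences rather than the long exact sequence.
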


\begin{proof}
Suppose $\frakp \in \min\supp_R\Hom_{\cat T}^*(x,t)$ for some $x\in\cat T^c$. By \cref{lem:suppproperties}(a) we have $\Hom_{\cat T}^*(x,t)_{\frakp}\neq0$ and hence $\supp_R\Hom_{\cat T}^*(x,t)_{\frakp}=\{\frakp\}$ by the minimality of $\frakp$ and \cref{lem:suppproperties}(b). It then follows from \cref{lem:supptorsion} that $\Hom_{\cat T}^*(x,t)_{\frakp}$ is $a$-torsion for any homogeneous element $a\in\frakp$. Now let $r\in\frakp$ be a homogeneous element of degree $d$. Applying $\Hom_{\cat T}^*(-,t)_{\frakp}$ to the exact triangle
\[
x\xra{r}\Sigma^d x\to \kos x{r}
\]
yields a long exact sequence
\begin{multline*}
\cdots\to\Hom_{\cat T}^*(x,t)_{\frakp}[-1]\to\Hom_{\cat T}^*(\kos x{r},t)_{\frakp} \\
\to\Hom_{\cat T}^*(x,t)_{\frakp}[-d]\xra{r}\Hom_{\cat T}^*(x,t)_{\frakp}\to\cdots.
\end{multline*}
Hence $\Hom_{\cat T}^*(\kos x{r},t)_{\frakp}$ is also $a$-torsion for any homogeneous element $a\in\frakp$ and $\Hom_{\cat T}^*(\kos x{r},t)_{\frakp}\neq0$. An induction shows that $\Hom_{\cat T}^*(\kos x{\fraka},t)_{\frakp}\neq0$ for any finitely generated ideal $\fraka\subseteq\frakp$. On the other hand, since $\varGamma_{\cal Z(\frakp)}t\in\cat T_{\cal Z(\frakp)}$ it follows that
\[
\supp_R\Hom_{\cat T}^*(\kos x{\fraka},\varGamma_{\cal Z(\frakp)}t)\subseteq \cal Z(\frakp)
\]
which implies $\supp_R\Hom_{\cat T}^*(\kos x{\fraka},\varGamma_{\cal Z(\frakp)}t)_{\frakp}=\emptyset$ by \cref{lem:suppproperties}(b). In view of \eqref{eq:suppdetection}, we then have $\Hom_{\cat T}^*(\kos x{\fraka},\varGamma_{\cal Z(\frakp)}t)_{\frakp}=0$ and thus
\[
\Hom_{\cat T}^*(\kos x{\fraka},L_{\cal Z(\frakp)}t)_{\frakp}\cong\Hom_{\cat T}^*(\kos x{\fraka},t)_{\frakp}\neq0
\]
by \cref{rem:bik-exact-triangle}. Since $\kos x{\fraka}\in\cat T_{\cal V(\fraka)}$ (\cref{lem:suppkos}), we have
\[
\Hom_{\cat T}^*(\kos x{\fraka},\varGamma_{\cal V(\fraka)}L_{\cal Z(\frakp)}t)_{\frakp}\cong\Hom_{\cat T}^*(\kos x{\fraka},L_{\cal Z(\frakp)}t)_{\frakp}\neq0
\]
and therefore $\varGamma_{\cal V(\fraka)}L_{\cal Z(\frakp)}t\neq0$. This is true for every finitely generated ideal $\fraka\subseteq\frakp$, so $\frakp\in\SuppBIK(t)$, which establishes the first inclusion. For the second, note that
\[
\frakp\notin\bigcup_{x\in \cat G}\Supp_R \Hom_{\cat T}^*(x,t) \iff t\in\cat T_{\cal Z(\frakp)} \iff L_{\cal Z(\frakp)}t=0.
\]
This implies $\frakp\notin\SuppBIK(t)$.
\end{proof}

\begin{Rem}
The theorem above generalizes \cite[Corollary~5.3]{BensonIyengarKrause08}. In \cite[Theorem~5.2]{BensonIyengarKrause08} it was shown that the first inclusion is an equality under the assumption that $R$ is Noetherian. It is not clear if the equality still holds without the Noetherian assumption.
\end{Rem}

\begin{Def}
We say that the BIK support satisfies the \emph{detection property} if $\SuppBIK(t)=\emptyset$ implies $t=0$ for every $t\in\cat T$.
\end{Def}

\begin{Cor}\label{cor:dccbik}
If the descending chain condition holds for the prime ideals of $R$ then the BIK support satisfies the detection property.
\end{Cor}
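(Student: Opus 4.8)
The plan is to derive the corollary directly from the first inclusion in \cref{thm:suppbik}, combined with the module-level detection statement \eqref{eq:suppdetection} and the fact that $\cat T$ is compactly generated. The order-theoretic input is the only new ingredient: the descending chain condition on the prime ideals of $R$ means precisely that the poset $\Spech(R)$ (ordered by inclusion) contains no strictly descending infinite chain, and this is equivalent to the statement that every \emph{nonempty} subset of $\Spech(R)$ has a minimal element.

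First I would fix an object $t\in\cat T$ with $\SuppBIK(t)=\emptyset$ and aim to show that $\Hom_{\cat T}^*(x,t)=0$ for every $x\in\cat T^c$. Apply the minimal-element property above to the set $\supp_R\Hom_{\cat T}^*(x,t)\subseteq\Spech(R)$: if this support were nonempty, it would contain a minimal element, i.e. $\min\Hom_{\cat T}^*(x,t)\neq\emptyset$. But the first inclusion of \cref{thm:suppbik} gives
\[
\min\Hom_{\cat T}^*(x,t)\subseteq\bigcup_{y\in\cat T^c}\min\Hom_{\cat T}^*(y,t)\subseteq\SuppBIK(t)=\emptyset,
\]
a contradiction. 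Hence $\supp_R\Hom_{\cat T}^*(x,t)=\emptyset$, and by \eqref{eq:suppdetection} this forces $\Hom_{\cat T}^*(x,t)=0$.

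Finally, since this vanishing holds for every compact object $x$ and $\cat T$ is compactly generated, we conclude $t=0$, which is exactly the detection property. I do not anticipate any genuine obstacle here: all of the substantive work is already carried out in \cref{thm:suppbik} and \cref{lem:weakass}; the only point requiring a word of care is the equivalence, for the poset $\Spech(R)$, between the descending chain condition and the existence of minimal elements in nonempty subsets, after which the argument is a purely formal chase.
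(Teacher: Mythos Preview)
Your proof is correct and follows exactly the same approach as the paper's own proof, which simply observes that the DCC hypothesis forces $\min S\neq\emptyset$ for every nonempty $S\subseteq\Spech(R)$ and then invokes \cref{thm:suppbik} together with \eqref{eq:suppdetection}. You have merely unpacked this terse argument in full detail, including the final step that $\Hom_{\cat T}^*(x,t)=0$ for all compact $x$ forces $t=0$ by compact generation.
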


\begin{proof}
The hypothesis implies that if a subset $S$ of $\Spech(R)$ is nonempty then $\min S$ is nonempty. The statement then follows from \cref{thm:suppbik} and \eqref{eq:suppdetection}.
\end{proof}

\begin{Rem}
If $R$ is Noetherian then the descending chain condition on the prime ideals of $R$ holds (see \cite[Corollary~12.4.5(1)]{DickmannSchwartzTressl19}, for example) and thus the BIK support has the detection property. However, we do not know if the detection property is always satisfied when $R$ is not Noetherian.
\end{Rem}

\begin{Rem}
In the following we record some basic properties of the BIK support, which are inspired by \cite[Theorems~4.2 and 4.7]{BillySanders17pp}.
\end{Rem}

\begin{Prop}\label{prop:suppbik}
The following hold:
\begin{enumerate}
\item $\SuppBIK(0)=\emptyset$.
\item $\SuppBIK(t)=\SuppBIK(\Sigma t)$ for every $t\in\cat T$.
\item $\SuppBIK(c)\subseteq\SuppBIK(a)\cup\SuppBIK(b)$ if $a\to b\to c\to \Sigma a$ is an exact triangle in $\cat T$.
\item $\SuppBIK(t_1\oplus t_2)=\SuppBIK(t_1)\cup\SuppBIK(t_2)$ for any $t_1,t_2\in\cat T$.
\item $\SuppBIK(\varGamma_{\cal V}t)=\cal V\cap\SuppBIK(t)$ and $\SuppBIK(L_{\cal V}t)=\cal V^c\cap\SuppBIK(t)$ for any $t\in\cat T$ and any specialization closed subset $\cal V$ of $\Spech(R)$. 
\end{enumerate}
\end{Prop}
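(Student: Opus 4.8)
The plan is: settle (a)--(d) by formal manipulations with the functors $\varGamma_{\cal V(\fraka)}L_{\cal Z(\frakp)}$, and then reduce (e) to the two containments $\SuppBIK(\varGamma_{\cal V}t)\subseteq\cal V$ and $\SuppBIK(L_{\cal V}t)\subseteq\cal V^c$.

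For (a): the zero ideal is finitely generated and contained in every $\frakp$, and $\varGamma_{\cal V(0)}L_{\cal Z(\frakp)}(0)=0$, so no prime lies in $\SuppBIK(0)$. Part (b) holds because $\varGamma_{\cal V}$ and $L_{\cal Z}$ are exact and therefore commute with $\Sigma$. For (c), if $\frakp\notin\SuppBIK(a)\cup\SuppBIK(b)$, choose finitely generated ideals $\fraka_1,\fraka_2\subseteq\frakp$ with $\varGamma_{\cal V(\fraka_i)}L_{\cal Z(\frakp)}$ killing $a$, respectively $b$; then $\fraka=\fraka_1+\fraka_2$ is finitely generated and contained in $\frakp$, and by \cref{lem:localcomp}(a) the functor $\varGamma_{\cal V(\fraka)}\cong\varGamma_{\cal V(\fraka_1)}\varGamma_{\cal V(\fraka_2)}\cong\varGamma_{\cal V(\fraka_2)}\varGamma_{\cal V(\fraka_1)}$ composed with $L_{\cal Z(\frakp)}$ annihilates both $a$ and $b$; applying this exact functor to the triangle $a\to b\to c\to\Sigma a$ forces $\varGamma_{\cal V(\fraka)}L_{\cal Z(\frakp)}c=0$, i.e.\ $\frakp\notin\SuppBIK(c)$. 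For (d): since $\varGamma_{\cal V(\fraka)}L_{\cal Z(\frakp)}$ is additive, $\varGamma_{\cal V(\fraka)}L_{\cal Z(\frakp)}(t_1\oplus t_2)\cong\varGamma_{\cal V(\fraka)}L_{\cal Z(\frakp)}t_1\oplus\varGamma_{\cal V(\fraka)}L_{\cal Z(\frakp)}t_2$, which vanishes precisely when both summands do; this yields the inclusion $\supseteq$, and $\subseteq$ is (c) applied to the rotation $\Sigma^{-1}t_2\to t_1\to t_1\oplus t_2\to t_2$ of the split triangle, together with (b).

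For (e), note first that $\SuppBIK(\varGamma_{\cal V}t)\subseteq\cal V$: indeed $\varGamma_{\cal V}t\in\cat T_{\cal V}$, so for each $x\in\cat T^c$ we have $\supp_R\Hom^*_{\cat T}(x,\varGamma_{\cal V}t)\subseteq\cal V$, hence $\Supp_R\Hom^*_{\cat T}(x,\varGamma_{\cal V}t)=\cl(\supp_R\Hom^*_{\cat T}(x,\varGamma_{\cal V}t))\subseteq\cal V$ (using \cref{lem:suppproperties}(a) and that $\cal V$ is specialization closed), and then the second inclusion of \cref{thm:suppbik} gives the claim. Once the companion containment $\SuppBIK(L_{\cal V}t)\subseteq\cal V^c$ is also known, the rest is formal: writing $A=\SuppBIK(\varGamma_{\cal V}t)$, $B=\SuppBIK(L_{\cal V}t)$ and $C=\SuppBIK(t)$, applying (b) and (c) to the localization triangle $\varGamma_{\cal V}t\to t\to L_{\cal V}t$ and to its two rotations gives $C\subseteq A\cup B$, $A\subseteq B\cup C$ and $B\subseteq A\cup C$; intersecting each with $\cal V$, respectively $\cal V^c$, and using $A\subseteq\cal V$ and $B\subseteq\cal V^c$ collapses the three to $A=\cal V\cap C$ and $B=\cal V^c\cap C$, which is (e).

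The step I expect to be the main obstacle is the containment $\SuppBIK(L_{\cal V}t)\subseteq\cal V^c$. Here the idea is: given $\frakp\in\cal V$, produce a finitely generated ideal $\fraka\subseteq\frakp$ with $\varGamma_{\cal V(\fraka)}L_{\cal Z(\frakp)}L_{\cal V}t=0$. By \cref{lem:localcomp}(b) and (c) this object equals $L_{\cal Z(\frakp)\cup\cal V}\bigl(\varGamma_{\cal V(\fraka)}t\bigr)$; since $\varGamma_{\cal V(\fraka)}t\in\cat T_{\cal V(\fraka)}$, the assignment $\cal W\mapsto\cat T_{\cal W}$ is monotone, and $L_{\cal Z(\frakp)\cup\cal V}$ kills $\cat T_{\cal Z(\frakp)\cup\cal V}$, it suffices to choose $\fraka$ with $\cal V(\fraka)\subseteq\cal Z(\frakp)\cup\cal V$, i.e.\ with $\cal V(\fraka)\cap\gen(\frakp)\subseteq\cal V$. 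As $\cal V$ is specialization closed and $\frakp\in\cal V$, the intersection $\bigcap_{\fraka}\bigl(\cal V(\fraka)\cap\gen(\frakp)\bigr)$ over all finitely generated $\fraka\subseteq\frakp$ equals $\cal V(\frakp)\cap\gen(\frakp)=\{\frakp\}\subseteq\cal V$; the Thomason-closed sets $\cal V(\fraka)$ are quasi-compact (indeed clopen) in the constructible topology and $\gen(\frakp)$ is constructible-closed, so the sets $\cal V(\fraka)\cap\gen(\frakp)\cap\cal V^c$ form a downward-directed system of constructible-closed sets with empty intersection, and -- provided $\cal V$ is itself open in the constructible topology, e.g.\ when $\cal V$ is a Thomason subset -- a compactness argument produces a single finitely generated $\fraka$ with $\cal V(\fraka)\cap\gen(\frakp)\cap\cal V^c=\emptyset$, as desired. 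Establishing this compactness step, and in particular pinning down the precise topological hypothesis on $\cal V$ under which it runs, is where I expect the real work to lie.
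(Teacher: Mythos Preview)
Your arguments for (a)--(d) are essentially the paper's, stated in the same way.

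For (e) the paper takes a much shorter route than you do. It simply records the two identities
\[
\varGamma_{\Spech(R)}L_{\cal V}(\varGamma_{\cal V}t)=0
\qquad\text{and}\qquad
\varGamma_{\cal V}L_{\emptyset}(L_{\cal V}t)=0
\]
and reads off $\SuppBIK(\varGamma_{\cal V}t)\subseteq\cal V$ and $\SuppBIK(L_{\cal V}t)\subseteq\cal V^c$ directly from the description of $\SuppBIK$ in \cref{rem:suppbik}, then runs the same triangle argument you give. There is no appeal to \cref{thm:suppbik} for the first containment, and no compactness argument for the second: the two vanishings above are instances of ``$\varGamma_{\cal U}L_{\cal W}s=0$ with $\frakp\in\cal U\cap\cal W^c$'', and the paper treats that as immediately giving $\frakp\notin\SuppBIK(s)$.

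That said, the point you flag as ``the main obstacle'' is not imaginary. The paper's one-line deduction for $\SuppBIK(L_{\cal V}t)\subseteq\cal V^c$ uses $\cal U=\cal V$, which is only specialization closed, whereas \cref{rem:suppbik} is stated for Thomason $\cal U$. Unwinding what is needed to pass from $\varGamma_{\cal V}(L_{\cal V}t)=0$ to the existence of a finitely generated $\fraka\subseteq\frakp$ with $\varGamma_{\cal V(\fraka)}L_{\cal Z(\frakp)}(L_{\cal V}t)=0$ leads exactly to your condition $\cal V(\fraka)\cap\gen(\frakp)\subseteq\cal V$, and your observation that this is automatic when $\cal V$ is Thomason (via compactness of the constructible topology) is correct. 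So you have put your finger on a step the paper's proof leaves implicit; your compactness argument is essentially what is required to justify the paper's ``Thus'' for arbitrary specialization closed $\cal V$, and your hesitation about the general case is warranted rather than a failure of technique. The difference is one of packaging: the paper absorbs the difficulty into a tacit extension of \cref{rem:suppbik}, while you expose it explicitly.
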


\begin{proof}
Part (a) is clear. Parts (b) and (d) follow from the fact that the localization and colocalization functors preserve finite direct sums and are exact.

For part (c), if $\frakp\notin\SuppBIK(a)\cup\SuppBIK(b)$, then there exist finitely generated ideals $\fraka,\frakb\subseteq\frakp$ such that $\varGamma_{\cal V(\fraka)}L_{\cal Z(\frakp)}a=0$ and $\varGamma_{\cal V(\frakb)}L_{\cal Z(\frakp)}b=0$. Thus by \cref{lem:localcomp} we have
\[
\varGamma_{\cal V(\fraka+\frakb)}L_{\cal Z(\frakp)}a=\varGamma_{\cal V(\fraka)}\varGamma_{\cal V(\frakb)}L_{\cal Z(\frakp)}a=0=\varGamma_{\cal V(\fraka)}\varGamma_{\cal V(\frakb)}L_{\cal Z(\frakp)}b=\varGamma_{\cal V(\fraka+\frakb)}L_{\cal Z(\frakp)}b.
\]
It then follows from the exactness of localization functors that $\varGamma_{\cal V(\fraka+\frakb)}L_{\cal Z(\frakp)}c=0$. Therefore $\frakp\notin\SuppBIK(c)$, which establishes (c).

For (e), note that $\varGamma_{\Spech(R)}L_{\cal V}(\varGamma_{\cal V}t)=0=\varGamma_{\cal V}L_{\emptyset}(L_{\cal V}t)$. Thus $\SuppBIK(\varGamma_{\cal V}t)\subseteq \cal V$ and $\SuppBIK(L_{\cal V}t)\subseteq \cal V^c$. Applying (c) to the exact triangle $\varGamma_{\cal V}t\to t\to L_{\cal V}t$ we obtain
\[\SuppBIK(\varGamma_{\cal V}t)\subseteq\SuppBIK(t)\cup\SuppBIK(L_{\cal V}t)\subseteq\SuppBIK(t)\cup\cal V^c.
\]
Intersecting with $\cal V$ leads to $\SuppBIK(\varGamma_{\cal V}t)=\SuppBIK(t)\cap\cal V$. A similar argument shows that $\SuppBIK(L_{\cal V}t)=\SuppBIK(t)\cap\cal V^c$, which completes the proof.
\end{proof}

\begin{Not}
For any class $\cat E$ of objects in $\cat T$ we write $\Loc{\cat E}$ for the localizing subcategory generated by $\cat E$ and define $\SuppBIK(\cat E) \coloneqq\bigcup_{t\in\cat E}\SuppBIK(t)$.
\end{Not}

\begin{Prop}\label{prop:suppbikloc}
The following hold:
\begin{enumerate}
\item $\SuppBIK(t)$ is localizing closed (recall \cref{rem:loc-top}) for any $t\in\cat T$.
\item $\SuppBIK(\cat L)$ is localizing closed for any localizing subcategory $\cat L$ of $\cat T$.
\item If the BIK support satisfies the detection property then for any class of objects $\cat E$ of $\cat T$ we have
\[
\SuppBIK(\Loc{\cat E})=\overline{\SuppBIK(\cat E)}^{\loc}
\]
where $\overline{\SuppBIK(\cat E)}^{\loc}$ denotes the closure of $\SuppBIK(\cat E)$ in $\Spech(R)$ with respect to the localizing topology.
\item If the BIK support satisfies the detection property then for any set of objects $\{t_i\}_{i\in I}$ of $\cat T$ we have
\begin{equation}\label{eq:suppbik-coproduct}
\SuppBIK(\coprod_{i\in I}t_i)=\overline{\bigcup_{i\in I}\SuppBIK(t_i)}^{\loc}.
\end{equation}
\end{enumerate}
\end{Prop}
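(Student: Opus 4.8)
The plan is to prove the four parts in the order (a), (b), (d), (c). For part (a) I would use the reformulation in \cref{rem:suppbik}: a prime $\frakp$ lies outside $\SuppBIK(t)$ exactly when there exist Thomason subsets $\cal V,\cal Z$ with $\frakp\in\cal V\cap\cal Z^c$ and $\varGamma_{\cal V}L_{\cal Z}t=0$. Since the same pair then certifies $\frakq\notin\SuppBIK(t)$ for every $\frakq\in\cal V\cap\cal Z^c$, we get
\[
\SuppBIK(t)^c=\bigcup\SET{\cal V\cap\cal Z^c}{\cal V,\cal Z\text{ Thomason and }\varGamma_{\cal V}L_{\cal Z}t=0},
\]
a union of weakly visible sets; by \cref{def:loc-top} this is open in the localizing topology, so $\SuppBIK(t)$ is localizing closed.

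For part (b) the obstruction is that an infinite union of localizing closed sets need not be localizing closed, so part (a) does not apply directly to $\SuppBIK(\cat L)=\bigcup_{t\in\cat L}\SuppBIK(t)$. Instead I would exhibit $\SuppBIK(\cat L)$ as the support of a single object: for each $\frakp\in\SuppBIK(\cat L)$ pick $t_\frakp\in\cat L$ with $\frakp\in\SuppBIK(t_\frakp)$, and set $t_0\coloneqq\coprod_{\frakp\in\SuppBIK(\cat L)}t_\frakp$. The index set is a subset of $\Spech(R)$, hence a set, so $t_0$ is defined and lies in $\cat L$ (localizing subcategories being closed under coproducts). Each $t_\frakp$ is a direct summand of $t_0$, so $\SuppBIK(t_\frakp)\subseteq\SuppBIK(t_0)$ by \cref{prop:suppbik}(d); hence $\SuppBIK(\cat L)\subseteq\SuppBIK(t_0)\subseteq\SuppBIK(\cat L)$, and part (a) applied to $t_0$ finishes the argument.

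For part (d) the inclusion $\overline{\bigcup_i\SuppBIK(t_i)}^{\loc}\subseteq\SuppBIK(\coprod_i t_i)$ is formal: each $t_i$ is a direct summand of $\coprod_i t_i$, so $\bigcup_i\SuppBIK(t_i)\subseteq\SuppBIK(\coprod_i t_i)$ by \cref{prop:suppbik}(d), and the latter is localizing closed by part (a), hence contains the localizing closure. For the reverse inclusion, given $\frakp\notin\overline{\bigcup_i\SuppBIK(t_i)}^{\loc}$ I would pick a basic localizing-open neighbourhood of $\frakp$ missing that set; since $\cal Z(\frakp)$ is the largest Thomason subset omitting $\frakp$ and the Thomason closed subsets containing $\frakp$ are exactly the $\cal V(\fraka)$ with $\fraka\subseteq\frakp$ finitely generated, this neighbourhood may be taken to be $\cal V(\fraka)\cap\cal Z(\frakp)^c$. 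Then \cref{prop:suppbik}(e) gives $\SuppBIK(\varGamma_{\cal V(\fraka)}L_{\cal Z(\frakp)}t_i)=\cal V(\fraka)\cap\cal Z(\frakp)^c\cap\SuppBIK(t_i)=\emptyset$, so the detection hypothesis forces $\varGamma_{\cal V(\fraka)}L_{\cal Z(\frakp)}t_i=0$ for every $i$. Since $\varGamma_{\cal V(\fraka)}$ and $L_{\cal Z(\frakp)}$ preserve coproducts (their defining subcategories $\cat T_{\cal V(\fraka)}$ and $\cat T_{\cal Z(\frakp)}$ being compactly generated by \cref{prop:bikalgebraic}), it follows that $\varGamma_{\cal V(\fraka)}L_{\cal Z(\frakp)}(\coprod_i t_i)=0$, and hence $\frakp\notin\SuppBIK(\coprod_i t_i)$ by \cref{def:SuppBIK}.

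For part (c), the inclusion $\overline{\SuppBIK(\cat E)}^{\loc}\subseteq\SuppBIK(\Loc{\cat E})$ is immediate from $\cat E\subseteq\Loc{\cat E}$ and part (b), as the right-hand side is then localizing closed and contains $\SuppBIK(\cat E)$. For the reverse inclusion I would set $\cal W\coloneqq\overline{\SuppBIK(\cat E)}^{\loc}$ and verify that $\cat M\coloneqq\SET{t\in\cat T}{\SuppBIK(t)\subseteq\cal W}$ is a localizing subcategory of $\cat T$: closure under suspensions and exact triangles comes from \cref{prop:suppbik}(b) and (c), closure under summands from \cref{prop:suppbik}(d), and closure under arbitrary coproducts from part (d) (because $\overline{\bigcup_i\SuppBIK(t_i)}^{\loc}\subseteq\cal W$ whenever each $\SuppBIK(t_i)\subseteq\cal W$). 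Since $\cat E\subseteq\cat M$ we conclude $\Loc{\cat E}\subseteq\cat M$, \ie $\SuppBIK(t)\subseteq\cal W$ for all $t\in\Loc{\cat E}$, which is the desired inclusion. The main structural obstacle is the single-object reduction in part (b); the only non-formal input beyond it is the fact that vanishing of $\SuppBIK$ forces an object to vanish, used in parts (d) and (c), which is exactly the detection hypothesis.
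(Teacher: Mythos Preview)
Your proof is correct; parts (a) and (b) are identical to the paper's argument. The difference is in the order of (c) and (d): the paper proves (c) first, by a direct argument, and then derives (d) from (c) via the chain
\[
\overline{\textstyle\bigcup_i\SuppBIK(t_i)}^{\loc}=\SuppBIK(\Loc{t_i})=\SuppBIK(\Loc{\textstyle\coprod_i t_i})=\overline{\SuppBIK(\textstyle\coprod_i t_i)}^{\loc}=\SuppBIK(\textstyle\coprod_i t_i),
\]
whereas you prove (d) first and then deduce (c) by showing that $\cat M=\SET{t}{\SuppBIK(t)\subseteq\cal W}$ is localizing. Both routes hinge on the same key point: for the implication ``$\varGamma_{\cal V}L_{\cal Z}$ kills each $e\in\cat E$, hence kills $\Loc{\cat E}$'' (the paper's version) or ``$\varGamma_{\cal V(\fraka)}L_{\cal Z(\frakp)}$ kills each $t_i$, hence kills $\coprod_i t_i$'' (your version) one needs these functors to preserve coproducts. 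Your argument makes this explicit by reducing to the basic neighborhoods $\cal V(\fraka)\cap\cal Z(\frakp)^c$ and invoking \cref{prop:bikalgebraic} for compact generation; the paper leaves this reduction implicit. In that sense your write-up is a touch more careful, but the two arguments are essentially the same.
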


\begin{proof}
(a): If $\frakp\not\in\SuppBIK(t)$ then by \cref{prop:suppbikdef} there exist Thomason subsets $\cal V$ and $\cal Z$ with $\frakp\in\cal V\cap\cal Z^c$ and $\varGamma_{\cal V}L_{\cal Z}t=0$. Hence $\cal V\cap \cal Z^c$ is an open neighborhood of $\frakp$ (with respect to the localizing topology) for which $\cal V\cap \cal Z^c \cap \SuppBIK(t)=\emptyset$. Therefore, $\SuppBIK(t)$ is localizing closed.

(b): We write $S$ for $\SuppBIK(\cat L)$. For every $\frakp\in S$ we choose an object $t(\frakp)\in\cat L$ such that $\frakp\in\SuppBIK(t(\frakp))$. Now we have
\[
S=\bigcup_{\frakp\in S}\SuppBIK(t(\frakp))=\SuppBIK(\coprod_{\frakp\in S}t(\frakp)),
\]
where the second equality follows from \cref{prop:suppbik}(d) and that $\coprod_{\frakp\in S}t(\frakp)$ belongs to $\cat L$. Therefore, $\SuppBIK(\cat L)$ is localizing closed by (a).

(c): Note that (b) implies
\[
\SuppBIK(\Loc{\cat E})\supseteq\overline{\SuppBIK(\cat E)}^{\loc}.
\]
If $\frakp\notin\overline{\SuppBIK(\cat E)}^{\loc}$ then there exist Thomason subsets $\cal V$ and $\cal Z$ with $\frakp\in\cal V\cap\cal Z^c$ and $\SuppBIK(\cat E)\cap \cal V\cap\cal Z^c=\emptyset$. By \cref{prop:suppbik}(e) we have
\[
\SuppBIK(\varGamma_{\cal V}L_{\cal Z}(\cat E))=\SuppBIK(\cat E)\cap \cal V\cap\cal Z^c=\emptyset.
\]
The detection property then implies $\varGamma_{\cal V}L_{\cal Z}(\cat E)=0$ and hence $\varGamma_{\cal V}L_{\cal Z}(\Loc{\cat E})=0$, which implies $\frakp\notin\SuppBIK(\Loc{\cat E})$. We thus obtain
\[
\SuppBIK(\Loc{\cat E})=\overline{\SuppBIK(\cat E)}^{\loc}.
\]
(d): Let $\{t_i\}_{i\in I}$ be a set of objects in $\cat T$. Observe that
\begin{align*}
\overline{\bigcup_{i\in I}\SuppBIK(t_i)}^{\loc} &= \SuppBIK(\Loc{t_i\mid i\in I})=\SuppBIK(\Loc{\coprod_{i\in I}t_i}) \\
&=\overline{\SuppBIK(\coprod_{i\in I}t_i)}^{\loc} = \SuppBIK(\coprod_{i\in I}t_i).\qedhere
\end{align*}
\end{proof}

\begin{Exa}\label{eg:biktt}
If $\cat T$ is a rigidly-compactly generated tensor triangulated category then the graded endomorphism ring $\End^*_{\cat T}(\unit)$ of the unit object is graded-commutative (see \cite[Example~2.5(2)]{DellAmbrogioStevenson13}) and it canonically acts on $\cat T$ (see \cite[Section~7]{BensonIyengarKrause11b}). Therefore, we can always use this canonical action to obtain a BIK support theory on $\cat T$. Moreover, the tensor structure gives another way of describing the BIK support as follows.
\end{Exa}

\begin{Not}
Recall that a localizing ideal $\cat L$ of a tensor triangulated category $\cat T$ is a localizing subcategory of $\cat T$ with the property that $\cat L \otimes \cat T \subseteq \cat L$. For any class $\cat E$ of objects in a tensor triangulated category $\cat T$ we write $\Loco{\cat E}$ for the localizing ideal generated by $\cat E$.
\end{Not}

\begin{Prop}\label{prop:suppbiksmash}
Let $\cat T$ be a rigidly-compactly generated tensor triangulated category equipped with the canonical action by $\End^*_{\cat T}(\unit)$. For any object $t\in\cat T$ we have
\[
\SuppBIK(t)=\left\{\frakp\in\Spech(R)\left|
{\begin{gathered}
\varGamma_{\cal V(\fraka)}\unit\otimes L_{\cal Z(\frakp)}\unit\otimes t\neq0\text{ for any finitely} \\
\text{ generated ideal }\fraka\text{ contained in }\frakp
\end{gathered}}\right.\right\}.
\]
\end{Prop}

\begin{proof}
By \cref{prop:bikalgebraic} we have
\[
\cat T_{\cal V(\fraka)}=\Loc{\kos x{\fraka}\mid x\in\cat T^c}
\]
for any finitely generated ideal $\fraka$ and
\[
\cat T_{\cal Z(\frakp)}=\Loc{\kos x{r}\mid x\in\cat T^c \text{ and } r\in R\setminus \frakp\text{ homogeneous}}
\]
for any prime ideal $\frakp$. Note that for any $x\in\cat T^c$ we have $\kos x{\fraka} \simeq x \otimes \kos \unit{\fraka}$. This follows from the definition of Koszul object and the exactness of tensor product. Hence $\Loc{\kos x{\fraka}\mid x\in\cat T^c} = \Loco{\kos \unit{\fraka}}$. Therefore $\cat T_{\cal V(\fraka)}$ is a compactly generated localizing ideal of $\cat T$ and thus a smashing localizing ideal of $\cat T$ by \cite[Theorem~4.1]{BalmerFavi11}. Similarly, $\cat T_{\cal Z(\frakp)}$ is a smashing localizing ideal of $\cat T$. It then follows from \cite[Theorem~2.13]{BalmerFavi11} that the functor $\varGamma_{\cal V(\fraka)}$ is isomorphic to $\varGamma_{\cal V(\fraka)}\unit\otimes-$ and that the functor $L_{\cal Z(\frakp)}$ is isomorphic to $L_{\cal Z(\frakp)}\unit\otimes-$. This finishes the proof.
\end{proof}

\begin{Rem}
An immediate consequence of the proposition above is that
\begin{equation}\label{eq:suppbik-tensor}
\SuppBIK(t_1\otimes t_2)\subseteq \SuppBIK(t_1)\cap \SuppBIK(t_2)
\end{equation}
for any objects $t_1,t_2\in\cat T$. It then follows that the space of supports $\SuppBIK(\cat T)$ coincides with $\SuppBIK(\unit)$. Moreover, \cref{prop:suppbik}(a)(b)(c), \eqref{eq:suppbik-coproduct}, and \eqref{eq:suppbik-tensor} are equivalent to the statement that $\SET{t \in \cat T}{\SuppBIK(t) \subseteq Y}$ is a localizing ideal of~$\cat T$ for any localizing closed subset $Y \subseteq \Spech(\End^*_{\cat T}(\unit))$; \cf \cite[Remark~2.12]{BarthelHeardSander23b}. In \cref{sec:comparison} we will use the canonical BIK support to give a notion of stratification; see \cref{def:coh-strat}.
\end{Rem}

\section{Tensor triangular support}\label{sec:supp}
In this section, after recalling some basic tensor triangular geometry, we summarize some basic properties of the tensor triangular support given in \cite{BillySanders17pp} and then study how this support behaves under geometric functors. We assume some familiarity with \cite{BarthelHeardSander23b} and follow their terminology and notation.

\begin{Hyp}
For the rest of the paper, we fix a rigidly-compactly generated tensor triangulated category $\cat T$.
\end{Hyp}

\begin{Not}
The subcategory $\cat T^c$ of compact objects of $\cat T$ is a rigid, essentially small tensor triangulated category, whose Balmer spectrum is denoted by $\Spc(\cat T^c)$.
\end{Not}

\begin{Rec}
The Balmer support function is defined as
\[
\supp(a) \coloneqq \SET{\cat P \in \Spc(\cat T^c)}{a \notin \cat P}
\]
for any compact object $a \in \cat T^c$. The Thomason closed subsets of $\Spc(\cat T^c)$ are precisely the subsets of the form $\supp(a)$ for $a \in \cat T^c$ and these subsets form a basis of closed subsets for $\Spc(\cat T^c)$. Moreover, $\supp$ induces a bijection
\begin{equation}\label{eq:ttgthm}
\{\text{thick ideals of }\cat T^c\} \xra{\sim} \{\text{Thomason subsets of }\Spc(\cat T^c)\}
\end{equation}
sending a thick ideal $\cat C\subseteq \cat T^c$ to $\supp(\cat C) \coloneqq \bigcup_{a\in \cat C}\supp(a)$. The inverse is given by sending a Thomason subsets $Y$ to $\cat T^c_Y \coloneqq \SET{a \in \cat T^c}{\supp(a) \subseteq Y}$. See \cite{Balmer05a} for more discussion.
\end{Rec}

\begin{Ter}
A coproduct-preserving tensor triangulated functor $F\colon \cat C\to \cat D$ between rigidly-compactly generated tt-categories is called a \emph{geometric functor}. Such a functor enjoys nice properties (\cite{BalmerDellAmbrogioSanders16}). For example, $F$ admits a right adjoint $U$ and we have the projection formula
\begin{equation}\label{eq:projection}
U(F(x)\otimes y) \simeq x \otimes U(y)
\end{equation}
for all $x\in \cat C$ and $y\in \cat D$.
\end{Ter}

\begin{Rec}
Each Thomason subset $Y\subseteq\Spc(\cat T^c)$ gives rise to an idempotent triangle (or equivalently, a smashing localization)
\begin{equation}\label{eq:idempotent-triangle}
e_Y\to\unit\to f_Y\to \Sigma e_Y
\end{equation}
such that
\begin{equation}\label{eq:kerfY}
\Ker(f_Y\otimes-)=\Loco{e_Y}=e_Y\otimes\cat T=\Loc{\cat T^c_Y}.
\end{equation}
Let $V$ be the complement of $Y$ in $\Spc(\cat T^c)$. The associated finite localization
\[
\cat T \to \cat T(V)\coloneqq \cat T/\Loco{e_Y} \cong f_Y \otimes \cat T
\]
is a geometric functor, which gives rise to an embedding $\varphi \colon \Spc(\cat T(V)^c) \hook \Spc(\cat T^c)$ with image $V$; see \cite[Remark~1.23]{BarthelHeardSander23b} for more discussion.
\end{Rec}

\begin{Exa}\label{exa:local-cat}
Let $\cat P\in\Spc(\cat T^c)$. We write $Y_{\cat P}\coloneqq\gen(\cat P)^c$ for the complement of the set of generalizations of $\cat P$.\footnote{In the previous sections, for a prime ideal $\frakp$ in a Zariski spectrum, we used the notation $\cal Z(\frakp)$ for the complement of $\gen(\frakp)$.} Note that $Y_{\cat P}=\supp(\cat P)$ and hence
\begin{equation}\label{eq:LocP}
\Ker(f_{Y_{\cat P}}\otimes-)=\Loco{e_{Y_\cat P}}=\Loc{\cat T^c_{Y_\cat P}}=\Loc{\cat P}
\end{equation}
by \eqref{eq:kerfY} and \eqref{eq:ttgthm}. Therefore, the finite localization $\cat T\to  \cat T(\gen(\cat P)) \eqqcolon \cat T_{\cat P}$ is obtained by killing $\Loc{\cat P}$. The induced spectral map $\varphi_{\cat P}\colon\Spc(\cat T_{\cat P}^c)\hookrightarrow\Spc(\cat T^c)$ identifies $\Spc(\cat T_{\cat P}^c)$ with its image $\gen(\cat P)$. The category $\cat T_{\cat P}$ is called the \emph{localization of $\cat T$ at $\cat P$}. We write $t_{\cat P}$ for the image of an object $t\in\cat T$ in $\cat T_{\cat P}$. See \cite[Remark~1.23 and Definition~1.25]{BarthelHeardSander23b} for further discussion.
\end{Exa}

\begin{Not}
Let $W=U\cap V^c$ be a weakly visible subset of $\Spc(\cat T^c)$ where $U$ and $V$ are Thomason subsets. Define $g_W\coloneqq e_U\otimes f_V$. By \cite[Remark~7.6]{BalmerFavi11} the idempotent object $g_W$ does not depend on the choice of $U$ and $V$ up to isomorphism. Moreover, it follows from \cite[Lemma~1.27]{BarthelHeardSander23b} that
\begin{equation}\label{eq:gW0}
g_W=0 \iff W=\emptyset.
\end{equation}
\end{Not}

\begin{Rem}
Let $F\colon\cat C\to\cat D$ be a geometric functor between rigidly-compactly generated tensor triangulated categories and let $\varphi\colon\Spc(\cat D^c)\to\Spc(\cat C^c)$ be the induced spectral map. By \cite[Proposition~5.11]{BalmerSanders17} we have
\begin{equation}\label{eq:FgW}
F(g_W)\simeq g_{\varphi\inv(W)}
\end{equation}
for any weakly visible subset $W$ of $\Spc(\cat C^c)$.
\end{Rem}

\begin{Rem}
A point $\cat P$ in the Balmer spectrum $\Spc(\cat T^c)$ is not always weakly visible. However, since the Thomason closed subsets of $\Spc(\cat T^c)$ form a basis of closed subsets it follows that 
\[
\overline{\{\cat P\}}=\bigcap_{\substack{a\in \cat T^c \\ a\notin \cat P}}\supp(a) \quad \text{and hence} \quad \{\cat P\} = \bigcap_{\substack{a\in \cat T^c \\ a\notin \cat P}}\supp(a) \cap \gen(\cat P).
\]
In other words, $\{\cat P\}$ is an intersection of weakly visible subsets. This leads to the following:
\end{Rem}

\begin{Def}[W. Sanders]\label{def:Supp}
The \emph{tensor triangular support} of an object $t\in \cat T$ is
\[
\Supp(t)\coloneqq \left\{ \cat P\in\Spc(\cat T^c) \left|
\begin{gathered}
g_W\otimes t\neq0\text{ for every weakly}\\
\text{ visible subset }W\text{ containing }\cat P
\end{gathered} \right. \right\}.
\]
\end{Def}

\begin{Rem}\label{rem:Supp-def}
By \cite[Lemma~1.27]{BarthelHeardSander23b} we have
\begin{equation}\label{eq:gW12}
g_{W_1\cap W_2}=g_{W_1}\otimes g_{W_2}
\end{equation}
for any weakly visible subsets $W_1$ and $W_2$. By \cref{rem:Thomason-closed} every Thomason subset of $\Spc(\cat T^c)$ is a union of Thomason closed subsets and $Y_{\cat P} = \gen(\cat P)^c$ is the largest Thomason subset not containing $\cat P$. Therefore, a Balmer prime $\cat P$ is in $\Supp(t)$ if and only if $e_{\supp a}\otimes f_{Y_{\cat P}}\otimes t\neq0$ for every Thomason closed subset $\supp a$ that contains $\cat P$ (\cf \cref{def:SuppBIK}).
\end{Rem}

\begin{Rem}
If $\Spc(\cat T^c)$ is weakly Noetherian, that is, every point of $\Spc(\cat T^c)$ is weakly visible, then the tensor triangular support coincides with the Balmer--Favi support in \cite[Definition~2.11]{BarthelHeardSander23b}. Indeed, if $\cat P\in\Spc(\cat T^c)$ is weakly visible then \eqref{eq:gW12} implies that $\cat P\in\Supp(t)$ if and only if $g_{\cat P}\otimes t\neq0$.
\end{Rem}

\begin{Prop}\label{prop:Supp-prop}
The tensor triangular support has the following basic properties:
\begin{enumerate}
\item $\Supp(0) = \emptyset$ and $\Supp(\unit) = \Spc(\cat T^c)$.
\item $\Supp(\Sigma t) = \Supp(t)$ for every $t \in\cat T$.
\item $\Supp(c) \subseteq \Supp(a) \cup \Supp(b)$ for any exact triangle $a \to b \to c \to \Sigma a$ in~$\cat T$.
\item $\Supp(t_1\oplus t_2)=\Supp(t_1)\cup \Supp(t_2)$ for any $t_1,t_2\in\cat T$.
\item $\Supp(t_1 \otimes t_2) \subseteq \Supp(t_1) \cap \Supp(t_2)$ for any $t_1,t_2 \in \cat T$.
\item $\Supp(t\otimes e_Y)=\Supp(t)\cap Y$ and $\Supp(t\otimes f_Y)=\Supp(t)\cap Y^c$ for every object $t\in\cat T$ and every Thomason subset $Y\subseteq \Spc(\cat T^c)$.
\end{enumerate}
\end{Prop}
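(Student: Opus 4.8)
The plan is to read off all six properties directly from \cref{def:Supp}, using three ingredients: the vanishing criterion $g_W=0\iff W=\emptyset$ from \eqref{eq:gW0}; the multiplicativity $g_{W_1}\otimes g_{W_2}\cong g_{W_1\cap W_2}$ from \eqref{eq:gW12}; and the elementary observation that the weakly visible subsets of $\Spc(\cat T^c)$ are closed under finite intersection (since Thomason subsets are closed under finite intersection and arbitrary union). Throughout I will use that $g_W\otimes(-)$ is an exact, coproduct-preserving functor.

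Parts (a)--(e) will then be short. For (a): the whole space is a Thomason closed subset with quasi-compact complement, so $\Spc(\cat T^c)=\Spc(\cat T^c)\cap\emptyset^c$ is weakly visible; since $g_W\otimes 0=0$ for any $W$ this yields $\Supp(0)=\emptyset$, whereas $g_W\otimes\unit=g_W\neq0$ for every nonempty weakly visible $W$, giving $\Supp(\unit)=\Spc(\cat T^c)$. Part (b) is $g_W\otimes\Sigma t\cong\Sigma(g_W\otimes t)$ together with the fact that $\Sigma$ is an autoequivalence. For (c): if $\cat P$ avoids $\Supp(a)$ and $\Supp(b)$, witnessed by weakly visible $W_1,W_2\ni\cat P$ with $g_{W_1}\otimes a=0=g_{W_2}\otimes b$, then $W\coloneqq W_1\cap W_2$ is weakly visible, contains $\cat P$, and by \eqref{eq:gW12} both $g_W\otimes a$ and $g_W\otimes b$ vanish; tensoring the triangle $a\to b\to c\to\Sigma a$ with $g_W$ then forces $g_W\otimes c=0$, so $\cat P\notin\Supp(c)$. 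Part (d): one inclusion is (c) applied to the split triangle $t_1\to t_1\oplus t_2\to t_2\to\Sigma t_1$, and the other follows because $g_W\otimes t_i$ is a direct summand of $g_W\otimes(t_1\oplus t_2)$. Part (e): a weakly visible $W\ni\cat P$ with $g_W\otimes t_1=0$ also kills $t_1\otimes t_2$, so $\Supp(t_1\otimes t_2)\subseteq\Supp(t_1)$, and symmetrically $\subseteq\Supp(t_2)$.

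The heart of the matter, and the step I expect to require the most care, is (f). Here I would first observe that for a Thomason subset $Y$ one has $e_Y\cong g_Y$ and $f_Y\cong g_{Y^c}$, since $Y=Y\cap\emptyset^c$ and $Y^c=\Spc(\cat T^c)\cap Y^c$ are both weakly visible; consequently \eqref{eq:gW12} upgrades to $g_W\otimes e_Y\cong g_{W\cap Y}$ and $g_W\otimes f_Y\cong g_{W\cap Y^c}$ for every weakly visible $W$. Given this, $\Supp(t\otimes e_Y)\subseteq\Supp(t)$ is an instance of (e); if $\cat P\notin Y$ then $W=Y^c$ is a weakly visible neighborhood of $\cat P$ with $g_W\otimes e_Y\cong g_\emptyset=0$, so $\cat P\notin\Supp(t\otimes e_Y)$; and conversely, for $\cat P\in\Supp(t)\cap Y$ and any weakly visible $W\ni\cat P$, the set $W\cap Y$ is weakly visible and still contains $\cat P$, so $g_W\otimes(t\otimes e_Y)\cong g_{W\cap Y}\otimes t\neq0$, giving $\cat P\in\Supp(t\otimes e_Y)$. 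The identity for $f_Y$ is symmetric: $\Supp(t\otimes f_Y)\subseteq\Supp(t)$ is (e); for $\cat P\in Y$ one picks a Thomason closed $Z$ with $\cat P\in Z\subseteq Y$ (possible since $Y$ is a union of Thomason closed subsets) to witness $g_Z\otimes f_Y\cong g_\emptyset=0$, hence $\cat P\notin\Supp(t\otimes f_Y)$; and for $\cat P\in\Supp(t)\cap Y^c$ and weakly visible $W\ni\cat P$ one uses that $W\cap Y^c$ is weakly visible and contains $\cat P$ to get $g_W\otimes(t\otimes f_Y)\cong g_{W\cap Y^c}\otimes t\neq0$. The only bookkeeping to watch is that $W\cap Y$ and $W\cap Y^c$ genuinely are weakly visible --- writing $W=U\cap V^c$, this holds because $U\cap Y$ and $V\cup Y$ are Thomason --- and that each tensor manipulation is a legitimate instance of \eqref{eq:gW12}.
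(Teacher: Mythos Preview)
Your proposal is correct and follows the natural direct-from-definition approach; the paper's own proof simply declares (a), (b), (d), (e) ``immediate from the definitions'' and defers (c) and (f) to \cite[Theorem~4.2]{BillySanders17pp}, so you have unpacked precisely what the cited argument amounts to. Your treatment of (c) mirrors the paper's later \cref{prop:suppbik}(c) for the BIK support, and your (f) argument---via $e_Y\cong g_Y$, $f_Y\cong g_{Y^c}$, and closure of weakly visible subsets under intersection with Thomason sets and their complements---is exactly the expected computation.
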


\begin{proof}
Parts (a), (b), (d), and (e) are immediate from the definitions. The proofs for parts (c) and (f) can be found in \cite[Theorem~4.2]{BillySanders17pp}.
\end{proof}

\begin{Rem}
The half-tensor formula \cite[Lemma~2.18]{BarthelHeardSander23b} still holds without the weakly Noetherian assumption:
\end{Rem}

\begin{Lem}\label{lem:half-tensor}
For any compact object $x\in \cat T^c$ and arbitrary object $t \in \cat T$ we have
\[ 
\Supp(x \otimes t) = \supp(x) \cap \Supp(t).
\]
In particular, for any compact object $x \in \cat T^c$, the tensor triangular support coincides with the usual notion of support: $\Supp(x) = \supp(x)$.
\end{Lem}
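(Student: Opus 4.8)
The plan is to prove the two inclusions of $\Supp(x\otimes t)=\supp(x)\cap\Supp(t)$ separately; the ``in particular'' clause is then the special case $t=\unit$, since $\Supp(\unit)=\Spc(\cat T^c)$ by \cref{prop:Supp-prop}(a).

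For $\Supp(x\otimes t)\subseteq\supp(x)\cap\Supp(t)$: containment in $\Supp(t)$ is immediate from \cref{prop:Supp-prop}(e). For containment in $\supp(x)$, note that for every Thomason closed subset $Z$ with $\supp(x)\subseteq Z$ one has $f_Z\otimes x=0$ (as $x$ is compact and supported on $Z$), hence $f_Z\otimes(x\otimes t)=0$, so $\Supp(x\otimes t)\cap Z^c=\emptyset$ by \cref{prop:Supp-prop}(f), \ie $\Supp(x\otimes t)\subseteq Z$. Intersecting over all such $Z$ and using that $\supp(x)$ is itself Thomason closed gives $\Supp(x\otimes t)\subseteq\supp(x)$.

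For the reverse inclusion $\supp(x)\cap\Supp(t)\subseteq\Supp(x\otimes t)$, fix $\cat P\in\supp(x)\cap\Supp(t)$ and an arbitrary weakly visible subset $W=U\cap V^c$ containing $\cat P$; we must show $g_W\otimes x\otimes t\neq0$. First I would reduce $W$ to a convenient shape. The set $U\cap\supp(x)$ is a Thomason subset containing $\cat P$, hence (being a union of Thomason closed subsets, see \cref{rem:Thomason-closed}) contains a Thomason closed subset $Z$ with $\cat P\in Z\subseteq U\cap\supp(x)$; write $Z=\supp(a)$ with $a\in\cat T^c$. Since $V$ is a Thomason subset not containing $\cat P$ we have $V\subseteq Y_{\cat P}$, so $\gen(\cat P)\subseteq V^c$ and therefore $W'\coloneqq Z\cap\gen(\cat P)\subseteq W$. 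By \eqref{eq:gW12} we have $g_W\otimes g_{W'}\cong g_{W'}$, so it suffices to show $g_{W'}\otimes x\otimes t\neq0$, where $g_{W'}=e_Z\otimes f_{Y_{\cat P}}$.

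The crux lies in two facts about $s\coloneqq e_Z\otimes f_{Y_{\cat P}}\otimes t=g_{W'}\otimes t$. First, since $\cat P\in\Supp(t)$ and $Z=\supp(a)$ is a Thomason closed subset containing $\cat P$, \cref{rem:Supp-def} gives $s\neq0$. Second, since $Z\subseteq\supp(x)$, the classification of thick $\otimes$-ideals of the rigid category $\cat T^c$ shows that every compact object supported on $Z$ lies in $\thickt{x}$; as $e_Z$ lies in the localizing subcategory generated by the compacts supported on $Z$, we deduce $e_Z\in\Loco{x}$. Hence $s=e_Z\otimes(f_{Y_{\cat P}}\otimes t)\in\Loco{x\otimes f_{Y_{\cat P}}\otimes t}$, and since the exact, coproduct-preserving functor $g_{W'}\otimes(-)$ sends $\Loco{u}$ into $\Loco{g_{W'}\otimes u}$ for every object $u$, applying it and simplifying with the idempotency relations $g_{W'}\otimes s\cong s$ and $g_{W'}\otimes x\otimes f_{Y_{\cat P}}\otimes t\cong g_{W'}\otimes x\otimes t$ (both instances of \eqref{eq:gW12}) yields $s\in\Loco{g_{W'}\otimes x\otimes t}$. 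As $s\neq0$ and $\Loco{0}=0$, this forces $g_{W'}\otimes x\otimes t\neq0$, hence $g_W\otimes x\otimes t\neq0$; since $W$ was arbitrary, $\cat P\in\Supp(x\otimes t)$. The reverse inclusion is the main obstacle, and its heart is the identity $e_Z\in\Loco{x}$ for Thomason closed $Z\subseteq\supp(x)$: this is precisely what transports the nonvanishing of $e_Z\otimes f_{Y_{\cat P}}\otimes t$ to that of $e_Z\otimes f_{Y_{\cat P}}\otimes x\otimes t$.
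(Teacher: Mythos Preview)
Your proof is correct. Both inclusions are established soundly, and the heart of the reverse inclusion --- the fact that $e_Z\in\Loco{x}$ whenever $Z\subseteq\supp(x)$ is Thomason closed --- is exactly the right idea.

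The paper's argument rests on the same fact but packages it far more efficiently. Rather than treating the two inclusions separately and reducing the weakly visible set pointwise, the paper observes in one stroke that for any object $s\in\cat T$ one has $x\otimes s=0\iff e_{\supp(x)}\otimes s=0$, which follows immediately from $\Loco{x}=e_{\supp(x)}\otimes\cat T$. Applying this with $s=t\otimes g_W$ gives $\cat P\notin\Supp(x\otimes t)\iff\cat P\notin\Supp(e_{\supp(x)}\otimes t)$, and then \cref{prop:Supp-prop}(f) identifies the right-hand side with $\supp(x)\cap\Supp(t)$. Your reduction to $W'=Z\cap\gen(\cat P)$ and the subsequent manipulation of localizing ideals are doing by hand what the single equivalence $x\otimes s=0\iff e_{\supp(x)}\otimes s=0$ accomplishes globally; the paper's route avoids choosing $Z$, invoking the classification of thick ideals, and tracking idempotency relations.
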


\begin{proof}
Observe that
\begin{align*}
\cat P\notin\Supp(x\otimes t) & \iff x\otimes t\otimes g_W=0 \text{ for some weakly visible }W\ni \cat P \\
& \iff e_{\supp(x)}\otimes t\otimes g_W=0 \quad (\Loco{x}=e_{\supp(x)}\otimes \cat T) \\
& \iff \cat P\notin\Supp(e_{\supp(x)}\otimes t)=\supp(x)\cap\Supp(t)
\end{align*}
where the last equality is due to \cref{prop:Supp-prop}(f).
\end{proof}

\begin{Rem}
Recall from \cref{rem:loc-top} that $\Spc(\cat T^c)$ is weakly Noetherian if and only if its localizing topology is discrete. Thus the localizing topology becomes relevant if $\Spc(\cat T^c)$ is not weakly Noetherian, as the following shows.
\end{Rem}

\begin{Prop}\label{prop:Supp-loc-closed}
For any object $t\in\cat T$ and any localizing subcategory $\cat L$ of $\cat T$ we have
\begin{enumerate}
\item $\Supp(t)$ is localizing closed.
\item $\Supp(\cat L) \coloneqq \bigcup_{s \in \cat L}\Supp(s)$ is localizing closed.
\end{enumerate}
\end{Prop}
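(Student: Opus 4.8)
The plan is to follow the template of \cref{prop:suppbikloc}(a)(b) for the BIK support, but with the Koszul/finite-localization machinery replaced by the idempotents $g_W$ and the defining property of $\Supp$ from \cref{def:Supp}. In outline: for (a) I will produce, around each $\cat P$ not in $\Supp(t)$, a basic open set of the localizing topology that misses $\Supp(t)$ entirely; and (b) will be deduced from (a) by a coproduct trick, exactly as in the proof of \cref{prop:suppbikloc}(b).

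For part (a), suppose $\cat P \notin \Supp(t)$. By \cref{def:Supp} there exists a weakly visible subset $W$ with $\cat P \in W$ and $g_W \otimes t = 0$. Since the weakly visible subsets form a basis for the localizing topology (\cref{def:loc-top}), $W$ is an open neighborhood of $\cat P$. Now for \emph{every} $\cat Q \in W$, the very same $W$ is a weakly visible subset containing $\cat Q$ with $g_W \otimes t = 0$, so $\cat Q \notin \Supp(t)$ by \cref{def:Supp}. Hence $W \cap \Supp(t) = \emptyset$, which shows that the complement of $\Supp(t)$ is open in the localizing topology, that is, $\Supp(t)$ is localizing closed. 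This step is immediate from the definitions.

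For part (b), write $S \coloneqq \Supp(\cat L) = \bigcup_{t \in \cat L} \Supp(t)$; since $S$ is a subset of the (set-sized) spectral space $\Spc(\cat T^c)$, it is a set. For each $\cat P \in S$ pick $t_{\cat P} \in \cat L$ with $\cat P \in \Supp(t_{\cat P})$, and put $t_0 \coloneqq \coprod_{\cat P \in S} t_{\cat P}$. As a localizing subcategory is closed under set-indexed coproducts, $t_0 \in \cat L$, and therefore $\Supp(t_0) \subseteq \Supp(\cat L) = S$. Conversely, splitting the index set gives $t_0 \cong t_{\cat Q} \oplus \coprod_{\cat P \neq \cat Q} t_{\cat P}$ for each $\cat Q \in S$, so \cref{prop:Supp-prop}(d) yields $\Supp(t_{\cat Q}) \subseteq \Supp(t_0)$; thus $S = \bigcup_{\cat P \in S}\{\cat P\} \subseteq \bigcup_{\cat P \in S}\Supp(t_{\cat P}) \subseteq \Supp(t_0)$. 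Hence $\Supp(\cat L) = \Supp(t_0)$, and this is localizing closed by part (a).

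There is no serious obstacle here; the only point worth checking carefully is the passage from the finite-additivity statement \cref{prop:Supp-prop}(d) to the monotonicity of $\Supp$ under direct summands of an infinite coproduct, which is supplied by the index-set decomposition $\coprod_{\cat P \in S} t_{\cat P} \cong t_{\cat Q} \oplus \coprod_{\cat P \neq \cat Q} t_{\cat P}$ used above (and one should also note that $S$ is a genuine set, so the coproduct $t_0$ exists in $\cat T$). Everything else is formal bookkeeping.
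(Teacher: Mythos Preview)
Your proof is correct and essentially coincides with the argument the paper has in mind: the paper simply cites \cite[Theorems~4.2]{BillySanders17pp} for this proposition, and your argument is precisely the one given there (and is the direct analogue of the paper's own proof of \cref{prop:suppbikloc}(a)(b), which you correctly identified as the template).
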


\begin{proof}
See \cite[Theorems~4.2]{BillySanders17pp}.
\end{proof}

\begin{Rem}
We now study how the tensor triangular support behaves under geometric functors.
\end{Rem}

\begin{Prop}\label{prop:Supp-U1}
Let $F \colon \cat C \to \cat D$ be a geometric functor between rigidly-compactly generated tt-categories, $U$ its right adjoint, and $\varphi \colon \Spc(\cat D^c)\to\Spc(\cat C^c)$ the induced spectral map. We have
\[
\Supp(U(\unit))=\overline{\im\varphi}^{\loc}
\]
where $\overline{\im\varphi}^{\loc}$ denotes the closure of $\im\varphi$ in $\Spc(\cat C^c)$ with respect to the localizing topology.
\end{Prop}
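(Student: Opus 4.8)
The plan is to reduce the statement to a pointwise criterion about the idempotents $g_W$. By \cref{def:Supp}, a point $\cat P\in\Spc(\cat C^c)$ lies in $\Supp(U(\unit))$ precisely when $g_W\otimes U(\unit)\neq 0$ for every weakly visible $W$ containing $\cat P$; and since the weakly visible subsets form a basis for the localizing topology (\cref{def:loc-top}), the point $\cat P$ lies in $\overline{\im\varphi}^{\loc}$ precisely when $W\cap\im\varphi\neq\emptyset$, i.e.\ when $\varphi\inv(W)\neq\emptyset$, for every weakly visible $W$ containing $\cat P$. Thus it suffices to establish, for each weakly visible $W\subseteq\Spc(\cat C^c)$, the equivalence
\[
g_W\otimes U(\unit)\neq 0 \iff \varphi\inv(W)\neq\emptyset .
\]

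To prove this, I would first compute $g_W\otimes U(\unit)$. Because $F$ is geometric its right adjoint $U$ preserves coproducts, and because $\cat C$ is rigidly-compactly generated its compact objects are rigid; hence the projection morphism $c\otimes U(d)\to U(Fc\otimes d)$ is an isomorphism for every $c\in\cat C$ and $d\in\cat D$ (it is an isomorphism for compact $c$ by rigidity, and both sides preserve coproducts in the variable $c$, so a localizing-subcategory argument extends it to all $c$). Applying this with $c=g_W$ and $d=\unit$, and using \eqref{eq:FgW}, gives
\[
g_W\otimes U(\unit)\;\cong\;U\big(F(g_W)\big)\;\cong\;U\big(g_{\varphi\inv(W)}\big),
\]
noting that $\varphi\inv(W)$ is again weakly visible since $\varphi$ is spectral. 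It now remains to check that $U$ neither creates nor destroys nonvanishing here: that $U(g_{\varphi\inv(W)})\neq 0$ if and only if $g_{\varphi\inv(W)}\neq 0$, the latter being equivalent to $\varphi\inv(W)\neq\emptyset$ by \eqref{eq:gW0}.

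The implication ``$g_{\varphi\inv(W)}=0\Rightarrow U(g_{\varphi\inv(W)})=0$'' is trivial, and gives the direction $\Leftarrow$ of the displayed equivalence (read contrapositively). For the converse I would use that $g_{\varphi\inv(W)}\cong F(g_W)$ lies in the essential image of $F$: if $\eta,\epsilon$ denote the unit and counit of $F\dashv U$, the triangle identity $\epsilon_{F(g_W)}\circ F(\eta_{g_W})=\mathrm{id}_{F(g_W)}$ exhibits $g_{\varphi\inv(W)}$ as a retract of $FU(g_{\varphi\inv(W)})$, so $U(g_{\varphi\inv(W)})=0$ would force $g_{\varphi\inv(W)}=0$. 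Combining the two directions yields the equivalence, hence $\Supp(U(\unit))=\overline{\im\varphi}^{\loc}$. The only substantive step is this last one: $U$ is not conservative in general, so it is essential that this particular object $g_{\varphi\inv(W)}$ is itself (up to isomorphism) in the image of $F$; the remainder is a matter of matching up the definitions of $\Supp$, of the localizing closure, and the standard projection formula.
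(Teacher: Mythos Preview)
Your proof is correct and follows essentially the same approach as the paper: both arguments reduce to the equivalence $g_W\otimes U(\unit)\neq 0 \iff \varphi\inv(W)\neq\emptyset$ via the projection formula $g_W\otimes U(\unit)\cong U(F(g_W))$, and both establish the nontrivial direction ($UF(g_W)=0\Rightarrow F(g_W)=0$) using the adjunction $F\dashv U$. The only cosmetic differences are that the paper phrases the latter step as $\Hom(F(g_W),F(g_W))\cong\Hom(g_W,UF(g_W))=0$ rather than via the triangle-identity retract, and organizes the argument as two inclusions (invoking \cref{prop:Supp-loc-closed} for one) rather than a single pointwise equivalence.
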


\begin{proof}
If $\cat Q\notin\Supp(U(\unit))$ then there exists some weakly visible subset $W\ni\cat Q$ such that $g_W\otimes U(\unit)=0$ and thus $U(F(g_W))=0$ by the projection formula \eqref{eq:projection}. It follows that $0=\Hom(g_W,UF(g_W))\simeq\Hom(F(g_W),F(g_W))$ and hence $F(g_W)=0$. We then have $\varphi\inv(W)=\emptyset$ in view of \eqref{eq:gW0} and \eqref{eq:FgW}. Therefore $\cat Q\notin\im\varphi$. We have established $\im\varphi \subseteq \Supp(U(\unit))$ and it follows from \cref{prop:Supp-loc-closed} that $\overline{\im\varphi}^{\loc} \subseteq \Supp(U(\unit))$. For the other inclusion, if $\cat Q\notin\overline{\im\varphi}^{\loc}$ then there exists a weakly visible subset $W\ni\cat Q$ such that $W\cap\im\varphi=\emptyset$. This means $\varphi\inv(W)=\emptyset$, which implies $F(g_W)=0$. By the projection formula we obtain $g_W\otimes U(\unit)=0$ and hence $\cat Q\notin\Supp(U(\unit))$.
\end{proof}

\begin{Rem}
In \cite[Corollary~13.15]{BCHS25} it was shown that if $\Spc(\cat C^c)$ is weakly Noetherian then $\Supp(U(\unit))=\im\varphi$, which is a special case of the proposition above.
\end{Rem}

\begin{Prop}\label{prop:Supp-basechange}
Let $F$, $U$, and $\varphi$ be as in \cref{prop:Supp-U1}. The following hold:
\begin{enumerate}
	\item $\Supp(F(c)) \subseteq \varphi\inv(\Supp(c))$ for any $c\in\cat C$.
	\item $\varphi(\Supp(d))\subseteq\Supp(U(d))$ for any $d\in\cat D$ if $U$ is conservative.
\end{enumerate}
\end{Prop}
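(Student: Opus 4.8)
The plan is to reduce both statements to the behaviour of the idempotents $g_W$ under $F$ recorded in \eqref{eq:FgW}, together with the projection formula already invoked in the proof of \cref{prop:Supp-U1}. The one preliminary observation I need is that, $\varphi$ being a spectral map, the preimage $\varphi\inv(W)$ of a weakly visible subset $W=U\cap V^c$ of $\Spc(\cat C^c)$ is again weakly visible: indeed $\varphi\inv$ carries Thomason subsets to Thomason subsets and commutes with intersections and complements, so $\varphi\inv(W)=\varphi\inv(U)\cap\varphi\inv(V)^c$. Moreover $\cat Q\in\varphi\inv(W)$ precisely when $\varphi(\cat Q)\in W$. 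This is what licenses appealing to \cref{def:Supp} and \eqref{eq:FgW} on the $\cat D$-side.

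For part (a) I would argue by contraposition. Suppose $\cat Q\in\Spc(\cat D^c)$ satisfies $\varphi(\cat Q)\notin\Supp(c)$. By \cref{def:Supp} there is a weakly visible subset $W$ of $\Spc(\cat C^c)$ containing $\varphi(\cat Q)$ with $g_W\otimes c=0$. Applying the (monoidal, coproduct-preserving) functor $F$ and using \eqref{eq:FgW} gives $g_{\varphi\inv(W)}\otimes F(c)\simeq F(g_W\otimes c)=0$. Since $\varphi\inv(W)$ is a weakly visible subset of $\Spc(\cat D^c)$ containing $\cat Q$, this shows $\cat Q\notin\Supp(F(c))$, and hence $\Supp(F(c))\subseteq\varphi\inv(\Supp(c))$.

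For part (b), assume $U$ is conservative and fix $\cat Q\in\Supp(d)$; set $\cat P\coloneqq\varphi(\cat Q)$. If $\cat P\notin\Supp(U(d))$, then there is a weakly visible subset $W\ni\cat P$ of $\Spc(\cat C^c)$ with $g_W\otimes U(d)=0$. The projection formula \cite[Proposition~2.15]{BalmerDellAmbrogioSanders16} gives $g_W\otimes U(d)\simeq U(F(g_W)\otimes d)$, so $U(F(g_W)\otimes d)=0$, and conservativity of $U$ forces $F(g_W)\otimes d=0$. By \eqref{eq:FgW} this reads $g_{\varphi\inv(W)}\otimes d=0$ with $\varphi\inv(W)$ weakly visible and containing $\cat Q$, contradicting $\cat Q\in\Supp(d)$. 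Therefore $\varphi(\cat Q)\in\Supp(U(d))$, i.e. $\varphi(\Supp(d))\subseteq\Supp(U(d))$.

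Neither step presents a serious obstacle; the only place where care is genuinely needed is part (b), where conservativity of $U$ is exactly what converts the vanishing of $U(F(g_W)\otimes d)$ into the vanishing of $F(g_W)\otimes d$ — without it one could only conclude that $F(g_W)\otimes d$ lies in $\ker U$. Part (a) requires no hypothesis on $U$ since there $F$ is applied directly rather than its adjoint. The remaining point worth stating explicitly is the stability of weak visibility under $\varphi\inv$, which is otherwise routine.
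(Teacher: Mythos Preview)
Your proof is correct and follows essentially the same route as the paper's: both parts argue by contradiction/contraposition, using \eqref{eq:FgW} to push the vanishing $g_W\otimes c=0$ (resp.\ $g_W\otimes U(d)=0$) through $F$, with the projection formula and conservativity of $U$ doing the work in part (b). The only cosmetic differences are that the paper swaps the roles of the letters $\cat P$ and $\cat Q$, and in part (b) it concludes via \cref{prop:Supp-prop}(f) that $\varphi\inv(W)\cap\Supp(d)=\emptyset$ rather than appealing directly to the definition as you do; your explicit remark that $\varphi\inv(W)$ is weakly visible is a welcome clarification that the paper leaves implicit.
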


\begin{proof}
For part (a), let $\cat P\in\Supp(F(c))$. If $\cat Q\coloneqq \varphi(\cat P)\notin\Supp(c)$ then there exists a weakly visible subset $W\ni\cat Q$ with $g_W\otimes c=0$. Hence
\[
0=F(g_W\otimes c)\simeq F(g_W)\otimes F(c)\simeq g_{\varphi\inv(W)}\otimes F(c)
\]
which contradicts $\cat P\in\Supp(F(c))$. This establishes (a).

To prove part (b), suppose that $\cat P\in\Supp(d)$ but $\cat Q\coloneqq\varphi(\cat P)\notin\Supp(U(d))$. By definition there exists a weakly visible subset $W\ni\cat Q$ such that $g_W\otimes U(d)=0$. We then have $U(F(g_W)\otimes d)=0$ by the projection formula and thus $g_{\varphi\inv(W)}\otimes d=0$ by the conservativity of $U$. \cref{prop:Supp-prop}(f) then implies $\varphi\inv(W)\cap\Supp(d)=\emptyset$, which contradicts $\cat P\in\Supp(d)$.
\end{proof}

\begin{Cor}\label{cor:Supp-finite-loc}
Let $F$, $U$, and $\varphi$ be as in \cref{prop:Supp-U1}. If $F$ is a finite localization then we have
\begin{enumerate}
	\item $\varphi(\Supp(d)) = \Supp(U(d))$ for any $d\in\cat D$.
	\item $\Supp(F(c)) = \varphi\inv(\Supp(c))$ for any $c\in\cat C$.
\end{enumerate}
\end{Cor}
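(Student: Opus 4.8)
\emph{Overall strategy.} The plan is to derive both parts from \cref{prop:Supp-basechange}, the projection formula, and the standard structure theory of finite localizations, with part~(b) a formal consequence of part~(a). First I would record the facts about $F$ that I will use. Since $F$ is a finite localization, $\ker F$ is the localizing ideal $\Loco{\cat S}$ generated by a thick $\otimes$-ideal $\cat S\subseteq\cat C^c$, which corresponds (Balmer's classification) to a Thomason subset $Y\subseteq\Spc(\cat C^c)$; consequently $UF\simeq f_Y\otimes(-)$, the right adjoint $U$ is fully faithful (so $FU\simeq\mathrm{id}$ and $U$ is conservative), its essential image is the subcategory $f_Y\otimes\cat C$ of $Y$-local objects, and $\varphi$ restricts to a homeomorphism of $\Spc(\cat D^c)$ onto the subspace $Y^c=\{\cat P\in\Spc(\cat C^c)\mid\cat S\subseteq\cat P\}$ of $\Spc(\cat C^c)$. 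In particular $\varphi$ is injective with $\im\varphi=Y^c$, and — since $\varphi$ identifies $\Spc(\cat D^c)$ with $Y^c$, so every Thomason subset of $\Spc(\cat D^c)$ is the preimage of a Thomason subset of $\Spc(\cat C^c)$ — every weakly visible subset of $\Spc(\cat D^c)$ has the form $\varphi\inv(W)$ for some weakly visible $W\subseteq\Spc(\cat C^c)$. Finally, for every $d\in\cat D$ the object $U(d)$ is $Y$-local, so by \cref{prop:Supp-prop}(f) we have $\Supp(U(d))=\Supp(f_Y\otimes U(d))=\Supp(U(d))\cap Y^c$, whence $\Supp(U(d))\subseteq Y^c=\im\varphi$.

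\emph{Part (a).} One inclusion is immediate: $U$ is conservative, so \cref{prop:Supp-basechange}(b) gives $\varphi(\Supp(d))\subseteq\Supp(U(d))$. For the reverse inclusion, take $\cat P\in\Supp(U(d))$. By the last remark $\cat P\in\im\varphi$, so $\cat P=\varphi(\cat Q)$ for the unique $\cat Q\in\Spc(\cat D^c)$ with $\varphi(\cat Q)=\cat P$. Suppose, for contradiction, that $\cat Q\notin\Supp(d)$: then there is a weakly visible $W'\ni\cat Q$ in $\Spc(\cat D^c)$ with $g_{W'}\otimes d=0$. Write $W'=\varphi\inv(W)$ for a weakly visible subset $W\subseteq\Spc(\cat C^c)$; then $\cat P=\varphi(\cat Q)\in W$, and by \eqref{eq:FgW} we have $F(g_W)\simeq g_{\varphi\inv(W)}=g_{W'}$, so the projection formula yields
\[
g_W\otimes U(d)\;\simeq\;U\bigl(F(g_W)\otimes d\bigr)\;\simeq\;U(g_{W'}\otimes d)\;=\;0.
\]
This contradicts $\cat P\in\Supp(U(d))$, since $W$ is a weakly visible subset containing $\cat P$. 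Hence $\cat Q\in\Supp(d)$ and $\cat P\in\varphi(\Supp(d))$, which proves $\Supp(U(d))=\varphi(\Supp(d))$.

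\emph{Part (b).} I would simply apply part~(a) to $d=F(c)$ and use $UF\simeq f_Y\otimes(-)$ together with \cref{prop:Supp-prop}(f):
\[
\varphi\bigl(\Supp(F(c))\bigr)=\Supp\bigl(UF(c)\bigr)=\Supp(f_Y\otimes c)=\Supp(c)\cap Y^c=\Supp(c)\cap\im\varphi.
\]
Applying $\varphi\inv$ to both sides and using that $\varphi$ is injective with image $Y^c$ gives $\Supp(F(c))=\varphi\inv(\Supp(c))$.

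\emph{Expected obstacle.} The only point of real substance is the reverse inclusion in~(a): it needs the preliminary observation $\Supp(U(d))\subseteq\im\varphi$ (which rests on identifying the essential image of $U$ with the $Y$-local objects) and the transport of the weakly visible ``witness'' $W'$ back along $\varphi$ so that \eqref{eq:FgW} and the projection formula become applicable. Everything else is bookkeeping with the idempotents $e_Y,f_Y$ and the structure of finite localizations, and no new idea is required.
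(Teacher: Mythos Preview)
Your proof is correct and follows essentially the same route as the paper's: both use \cref{prop:Supp-basechange}(b) for one inclusion in (a), then for the reverse inclusion transport a weakly visible witness from $\Spc(\cat D^c)$ back to $\Spc(\cat C^c)$ via the spectral-subspace identification and apply the projection formula with \eqref{eq:FgW}; part (b) is then deduced from (a) using $UF\simeq f_Y\otimes(-)$ and \cref{prop:Supp-prop}(f). The only cosmetic differences are that you make the containment $\Supp(U(d))\subseteq\im\varphi$ explicit upfront (the paper leaves it implicit), while the paper pins down the ``every weakly visible subset of $\Spc(\cat D^c)$ is $\varphi^{-1}(W)$'' step by citing \cite[Theorem~2.1.3]{DickmannSchwartzTressl19}.
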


\begin{proof}
For part (a), it suffices to show that $\Supp(U(d)) \subseteq \varphi(\Supp(d))$ by \cref{prop:Supp-basechange}(b). To this end, suppose $\cat Q = \varphi(\cat P) \in \Supp(U(d))$ but $\cat P \notin \Supp(d)$. By definition there exists a weakly visible subset $W \subseteq \Spc(\cat D^c)$ such that $\cat P \in W$ and $g_W\otimes d=0$. The map $\varphi \colon \Spc(\cat D^c)\hook\Spc(\cat C^c)$ exhibits $\Spc(\cat D^c)$ as a spectral subspace of $\Spc(\cat C^c)$ since $F$ is a finite localization. We thus have $W=\varphi\inv(Z)$ for some weakly visible subset $Z\subseteq\Spc(\cat C^c)$ by \cite[Theorem~2.1.3]{DickmannSchwartzTressl19}. It follows from the projection formula and \eqref{eq:FgW} that $g_{Z} \otimes U(d) \simeq U(F(g_{Z})\otimes d) \simeq U(g_W\otimes d)= 0$, which contradicts $\cat Q \in \Supp(U(d))$. This establishes (a).

For part (b), suppose that $F$ is the finite localization associated to a Thomason subset $Y \subseteq \Spc(\cat C^c)$ and let $V$ be the complement of $Y$. We then have an induced embedding $\varphi\colon\Spc(\cat T(V)^c) \hook \Spc(\cat T^c)$ with image $V$. For any $c \in \cat C$ we have $\Supp(F(c))=\varphi\inv(\Supp(UF(c)))$ by part (a). Observe that
\[
\begin{split}
\varphi\inv(\Supp(UF(c)))&=\varphi\inv(\Supp(U(\unit)\otimes c))=\varphi\inv(\Supp(f_Y\otimes c)) \\
&=\varphi\inv(Y^c\cap\Supp(c))=\varphi\inv(\Supp(c))
\end{split}
\]
where the first equality uses the projection formula, the second equality uses the fact that $U(\unit)=UF(\unit)=f_Y$, and the third equality is due to \cref{lem:half-tensor}. This completes the proof.
\end{proof}

\section{The detection property}\label{sec:detection}
Our next goal is to show that the detection property can be checked at the algebraic localizations at the closed points of the Zariski spectrum of the graded endomorphism ring of the unit.

\begin{Exa}[Graded algebraic localization]\label{exa:graded-algebraic-local}
Recall from \cref{eg:biktt} that the graded endomorphism ring $\End_{\cat T}^*(\unit)$ of the unit object canonically acts on $\cat T$. Thus the machinery in \cref{sec:bik} applies. Denote by $\End_{\cat T}^{\mathrm{hom}}(\unit)$ the set of homogeneous elements in $\End_{\cat T}^*(\unit)$. There is a natural continuous map
\[
\begin{split}
\rho\colon\Spc(\cat T^c)&\to\Spech(\End^*_{\cat T}(\unit))\\
\cat P&\mapsto\ideal{f\in\End^{\mathrm{hom}}_{\cat T}(\unit) \mid \cone(f)\notin\cat P}
\end{split}
\]
such that $\rho\inv(\cal V(s))=\supp(\cone(s))$ for any $s\in \End^{\mathrm{hom}}_{\cat T}(\unit)$ \cite[Theorem~5.3]{Balmer10a}. Let $S\subseteq\End^{\mathrm{hom}}_{\cat T}(\unit)$ be a multiplicative subset of homogeneous elements. We denote the finite localization of $\cat T$ associated to the Thomason subset
\[
\rho\inv(\bigcup_{s\in S}\cal V(s))=\bigcup_{s\in S}\supp(\cone(s))
\]
by $S\inv\cat T$, which is called the \emph{algebraic localization of $\cat T$ with respect to $S$}. The corresponding localization functor and colocalization functor are denoted by $L_S$ and $\varGamma_S$, respectively.

In particular, for a prime ideal $\frakp\in\Spech(\End^*_{\cat T}(\unit))$ we define the multiplicative subset $S_{\frakp}\coloneqq\SET{s\in\End_{\cat T}^{\mathrm{hom}}(\unit)}{s\notin\frakp}$ and call $\cat T_{\frakp}\coloneqq S_{\frakp}\inv\cat T$ the \emph{algebraic localization of $\cat T$ at $\frakp$}. Observe that
\[
\Loco{\cone(s)\mid s\in S_{\frakp}}=\Loc{\kos x{s}\mid x\in \cat T^c, s\in S_{\frakp}}=\cat T_{\cal Z(\frakp)}
\]
where the first equality is explained in the proof of \cref{prop:suppbiksmash} and the second equality follows from \cref{prop:bikalgebraic}(b). Therefore, the localization functor $L_{\cal Z(\frakp)}$ in \cref{def:SuppBIK} is identical to the algebraic localization functor $L_{S_{\frakp}}$ which is associated to the Thomason subset $\rho\inv(\bigcup_{s\in S_{\frakp}}\cal V(s))=\rho\inv(\cal Z(\frakp))$. 
\end{Exa}

\begin{Not}
For any $t\in\cat T$ we write $\pi_*(t)$ for
\[
\Hom_{\cat T}^{-*}(\unit,t) = \coprod_{n\in\bbZ}\Hom_{\cat T}(\unit,\Sigma^{-n} t).
\]
\end{Not}

\begin{Rem}
The following proposition generalizes both \cite[Theorem~3.3.7]{HoveyPalmieriStrickland97} and \cite[Corollary~3.10]{Balmer10a}. The ungraded version of it can be found in \cite[Theorem~2.33(h)]{DellAmbrogio10}.
\end{Rem}

\begin{Prop}\label{prop:algebraic-local}
Let $S\subseteq\End^{\mathrm{hom}}_{\cat T}(\unit)$ be a multiplicative subset of homogeneous elements. There is a natural isomorphism
\[
\Hom_{\cat T}^*(x,L_S t)\cong S\inv\Hom_{\cat T}^*(x,t)
\]
for $x\in\cat T^c$ and $t\in\cat T$. In particular, we have
\[
\pi_*(L_S t)\cong S\inv\pi_*(t)
\]
for any $t\in\cat T$.
\end{Prop}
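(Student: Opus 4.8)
The plan is to show that the localization functor $L_S$ realizes the algebraic localization at the level of graded Hom-modules, exploiting the fact that $L_S$ is a \emph{finite} (hence smashing) localization whose kernel is $\Loco{\cone(s)\mid s\in S}$. First I would record the localization triangle $\varGamma_S t \to t \to L_S t$ and apply $\Hom_{\cat T}^*(x,-)$ to it for a fixed compact object $x\in\cat T^c$, obtaining a long exact sequence of graded $R$-modules (with $R=\End_{\cat T}^*(\unit)$). The goal is then to identify $\Hom_{\cat T}^*(x,L_S t)$ with $S\inv\Hom_{\cat T}^*(x,t)$, and since localization at $S$ is exact, it suffices to check that $\Hom_{\cat T}^*(x,\varGamma_S t)$ becomes zero after inverting $S$ and that $\Hom_{\cat T}^*(x,L_S t)$ is already $S$-local (i.e.\ $S$ acts invertibly on it).

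For the second point, fix $s\in S$ of degree $d$. Since $\cone(s)$ lies in the kernel of $L_S$, we have $L_S(\cone(s))=0$, so $\Hom_{\cat T}^*(x, L_S t)$ is unchanged by $\Hom_{\cat T}^*(\kos x s, L_S t)$-type considerations; more directly, applying $\Hom_{\cat T}^*(-,L_S t)$ to the triangle $\unit\xra{s}\Sigma^d\unit\to\cone(s)$ and using $\Hom_{\cat T}^*(\cone(s)\otimes x', L_S t)\cong \Hom_{\cat T}^*(x'\otimes\cone(s), L_S t)$ together with the fact that $\cone(s)\otimes x'$ maps to $0$ under $L_S$ (as $\cone(s)$ generates the kernel as an ideal) shows that multiplication by $s$ is an isomorphism on $\Hom_{\cat T}^*(x,L_S t)$. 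Here I would be careful to use the tensor-ideal structure of the kernel, so that $x\otimes\cone(s)$ — and hence $\kos x s$ — is annihilated by $L_S$; this is exactly the content of the description $\Loco{\cone(s)\mid s\in S}=\Loc{\kos x s\mid x\in\cat T^c,\, s\in S}$ recalled in \cref{exa:graded-algebraic-local}. It follows that $\Hom_{\cat T}^*(x,L_S t)$ is a module over $S\inv R$.

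For the first point — that $\Hom_{\cat T}^*(x,\varGamma_S t)$ is $S$-torsion — I would use that $\varGamma_S t$ lies in $\Loco{\cone(s)\mid s\in S}$, so $\Hom_{\cat T}^*(x,\varGamma_S t)$ is built (via triangles and coproducts) from the modules $\Hom_{\cat T}^*(x, x'\otimes\cone(s))$, each of which is $s$-torsion because $s$ annihilates $\cone(s)$ hence $x'\otimes\cone(s)$ (so $s$ acts as zero, not merely nilpotently, on the corresponding Hom-group). A colimit/triangle argument then shows any element of $\Hom_{\cat T}^*(x,\varGamma_S t)$ is killed by some finite product of elements of $S$, i.e.\ the module is $S$-torsion; care is needed since $\varGamma_S t$ is an infinite homotopy colimit, but using that $x$ is compact, any element of $\Hom_{\cat T}^*(x,\varGamma_S t)$ factors through a finite stage, reducing to the finitely-built case. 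Combining the two points, the long exact sequence after applying the exact functor $S\inv(-)$ collapses to give $S\inv\Hom_{\cat T}^*(x,t)\xra{\sim}\Hom_{\cat T}^*(x,L_S t)$, and naturality in $x$ and $t$ is clear from the construction. The ``in particular'' statement is the case $x=\unit$ after reindexing $\Hom_{\cat T}^{-*}$ versus $\Hom_{\cat T}^*$. The main obstacle I anticipate is the bookkeeping around the infinite homotopy colimit defining $\varGamma_S t$ — making precise that compactness of $x$ lets one reduce the $S$-torsion claim to finitely-generated localizing-ideal stages — rather than anything conceptually deep; alternatively one can sidestep this by invoking \cref{prop:bikalgebraic}(b) to identify $\Loco{\cone(s)\mid s\in S}$ with $\cat T_{\cal Z(\frakp)}$-type categories and then citing \cref{lem:supplocaltorsion}, though the clean statement here is for a general multiplicative set $S$, so I would argue directly.
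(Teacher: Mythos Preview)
Your strategy matches the paper's: use the localization triangle $\varGamma_S t\to t\to L_S t$, show that $\Hom_{\cat T}^*(x,L_S t)$ is already $S$-local, and that the $\varGamma_S$-term vanishes after inverting $S$. For the $S$-local step your argument is exactly the paper's (apply $\Hom_{\cat T}^*(-,L_S t)$ to the defining triangle of $\cone(s)\otimes x$ and use that $\cone(s)\otimes x$ lies in the kernel of $L_S$).

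The one place where the paper is cleaner is precisely the step you flagged as the obstacle. Rather than arguing element-by-element that $\Hom_{\cat T}^*(x,\varGamma_S t)$ is $S$-torsion via a filtered-colimit/finite-stage reduction, the paper simply localizes first: since $x$ is compact and $S\inv(-)$ is exact, $S\inv\Hom_{\cat T}^*(x,-)$ is a coproduct-preserving homological functor, so its kernel is a localizing subcategory of $\cat T$. This kernel contains every $t'\otimes\cone(s)$ (from the triangle $t'\xra{s}\Sigma^d t'\to t'\otimes\cone(s)$, after inverting $S$ the first map becomes an isomorphism), hence contains $\Loc{\cat T\otimes\cone(s)\mid s\in S}=\Loco{\cone(s)\mid s\in S}=\varGamma_S\cat T$. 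No colimit bookkeeping is needed. Your route also works---the class of $u$ with $\Hom_{\cat T}^*(x,u)$ $S$-torsion is closed under triangles and, using compactness of $x$ and multiplicativity of $S$, under coproducts---but the paper's phrasing dissolves the difficulty you anticipated. One minor inaccuracy: $s$ need not act as literally zero on $\Hom_{\cat T}^*(x,x'\otimes\cone(s))$; one only gets that $s^2$ kills it, which is still enough for $S$-torsion.
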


\begin{proof}
If $x$ is compact then $S\inv\Hom^*_{\cat T}(x,-)$ is a coproduct-preserving homological functor on $\cat T$ which vanishes on $t\otimes \cone(s)$ for all $t\in\cat T$ and $s\in S$ and hence on $\Loc{\cat T\otimes \cone(s)\mid s\in S}=\Loco{\cone(s)\mid s\in S}=\varGamma_S \cat T$. It follows from the localization triangle
\[
\varGamma_S t \to t \to L_S t
\]
that there is a natural isomorphism
\[
S\inv\Hom^*_{\cat T}(x,t)\cong S\inv\Hom^*_{\cat T}(x,L_S t)
\]
for any $x\in \cat T^c$ and $t\in \cat T$. Let $d$ be the degree of $s$. Applying $\Hom^*_{\cat T}(-,t)$ to the exact triangle
\[
x\xra{s}\Sigma^d x\to \cone(s)\otimes x
\]
yields an exact sequence
\begin{multline*}
0=\Hom^*_{\cat T}(\cone(s)\otimes x,L_S t)\to\Hom^*_{\cat T}(x,L_S t)[-d] \\
\xra{s}\Hom^*_{\cat T}(x,L_S t)\to\Hom^*_{\cat T}(\cone(s)\otimes x,L_S t)[1]=0.
\end{multline*}
Thus multiplying by $s$ is an isomorphism on $\Hom^*_{\cat T}(x,L_S t)$. Therefore we have
\[
S\inv\Hom^*_{\cat T}(x,L_S t)\cong\Hom^*_{\cat T}(x,L_S t)
\]
which completes the proof.
\end{proof}

\begin{Exa}\label{exa:Spc-SH}
Recall from \cite[Corollary~9.5]{Balmer10a} that the Balmer spectrum $\Spc(\SH^c)$ of the stable homotopy category together with its comparison map can be depicted as follows:
\begin{equation*}
\vcenter{\xymatrix@C=.8em @R=.4em{
&&\cat P_{2,\infty} \ar@{-}[d]
&\cat P_{3,\infty} \ar@{-}[d]
&& \kern-2em{\cdots}
&\cat P_{p,\infty} \ar@{-}[d]
& {\cdots}
\\
\Spc(\SHc)= \ar[ddddddd]_-{\displaystyle\rho_\SHc}
&&{\vdots} \ar@{-}[d]
& {\vdots} \ar@{-}[d]
&&& {\vdots} \ar@{-}[d]
\\
&&\cat P_{2,n+1} \ar@{-}[d]
& \cat P_{3,n+1} \ar@{-}[d]
&& \kern-2em{\cdots}
& \cat P_{p,n+1} \ar@{-}[d]
& {\cdots}
\\
&&\cat P_{2,n} \ar@{-}[d]
& \cat P_{3,n} \ar@{-}[d]
&& \kern-2em{\cdots}
& \cat P_{p,n} \ar@{-}[d]
& {\cdots}
\\
&&{\vdots} \ar@{-}[d]
& {\vdots} \ar@{-}[d]
&&& {\vdots} \ar@{-}[d]
\\
&&\cat P_{2,1} \ar@{-}[rrd]
& \cat P_{3,1} \ar@{-}[rd]
&& \kern-2em{\cdots}
& \cat P_{p,1} \ar@{-}[lld]
& {\cdots}
\\
&&&& \cat P_{p,0}
\\\\
\Spec(\bbZ)=
&&2\bbZ \ar@{-}[rrd]
& 3\bbZ \ar@{-}[rd]
&& \kern-2em{\cdots}
& p\bbZ \ar@{-}[lld]
& {\cdots}
\\
&&&& (0)}}
\end{equation*}
where $\cat P_{p,n}$ is the kernel in $\SHc$ of the $p$-local $n$-th Morava K-theory. In other words, $\cat P_{p,n} = \SET{x \in \SHc}{x\otimes K(p,n) = 0}$ where $K(p,n)$ denotes the $p$-local $n$-th Morava K-theory spectrum. In particular,~$\cat P_{p,0}=\SH_{\mathrm{tor}}^c$ is the subcategory of finite torsion spectra, which is independent of $p$. For any prime number $p$, the $p$-local stable homotopy category $\SHp$, which is defined as the Bousfield localization of the stable homotopy category $\SH$ with respect to the mod-$p$ Moore spectrum, can be realized as the algebraic localization at $p\bbZ\in\Spech(\End_{\SH}^*(\unit))\cong\Spec(\bbZ)$ of $\SH$ by \cref{prop:algebraic-local}. Moreover, since we have $\cat P_{p,\infty}=\thickt{\cone(s)\mid s\notin p\bbZ}$ by \cite[Corollary~9.5(c)]{Balmer10a}, the algebraic localization at $p\bbZ$ coincides with the localization at $\cat P_{p,\infty}$ in the sense of \cref{exa:local-cat}. Therefore, the Balmer spectrum $\Spc(\SHcp)$ can be identified with
\[
\gen(\cat P_{p,\infty})=\{\cat P_{p,n}\mid0\le n\le\infty\}\subset\Spc(\SHc).
\]
For any spectrum $t\in\SH$, we write $t_{(p)}\coloneqq t_{p\bbZ} = t_{\cat P_{p,\infty}}$ for the corresponding $p$-local spectrum.
\end{Exa}

\begin{Exa}\label{exa:SH-non-weakly-noetherian}
The space $\Spc(\SHc)$ is not weakly Noetherian. Indeed, by \cite[Corollary~9.5]{Balmer10a} any Thomason subset of $\Spc(\SHc)$ is the union of subsets of the form $\overline{\{\cat P_{p,n_p}\}}$ where $p$ is a prime number and $0\le n_p<\infty$. Thus the closed point $\cat P_{p,\infty}$ is not weakly visible for every prime number $p$. In particular, any Thomason subset of $\Spc(\SHcp)$ is of the form $\overline{\{\cat P_{p,n}\}}$ for $0\le n<\infty$ and therefore $\cat P_{p,\infty}$ is the only point in $\Spc(\SHcp)$ that is not weakly visible.
\end{Exa}

\begin{Rem}\label{rem:SH-detection}
In \cite[Remark~11.11]{BarthelHeardSander23b} the authors considered the Balmer--Favi support for $\SHp$ that excludes $\cat P_{p,\infty}$ since it is not weakly visible. More precisely, for any $t_{(p)} \in \SHp$ they defined
\[
\Supp_{<\infty}(t_{(p)})\coloneqq \SET{\cat P_{p,n}}{0\le n<\infty \text{ and } g_{\cat P_{p,n}}\otimes t_{(p)} \neq 0}.
\]
They further extended this support to the point $\cat P_{p,\infty}$ by declaring that $\cat P_{p,\infty}$ is in the support of a $p$-local spectrum $t_{(p)}$ if and only if $\HFp\otimes t_{(p)}\neq0$. We denote this extended support function by $\Supp_{\le\infty}$. Let $I\in\SH$ be the Brown-Comenetz dual of the sphere spectrum. Note that the $p$-local Brown-Comenetz dual of the sphere spectrum $I_{(p)}\in\SHp$ is isomorphic to the Brown-Comenetz dual of the $p$-local sphere spectrum as defined in \cite[Section~7]{HoveyPalmieri99}. In \cite[Remark~11.11]{BarthelHeardSander23b} it was explained that $\Supp_{<\infty}(I_{(p)})=\emptyset$ and $\HFp\otimes I_{(p)}=0$. Hence $\Supp_{\le\infty}(I_{(p)})=\emptyset$. This motivates the following:
\end{Rem}

\begin{Def}[The detection property]
\label{def:detectionproperty}
We say that $\cat T$ has the \emph{detection property} if $\Supp(t)=\emptyset$ implies $t=0$ for every $t \in \cat T$.
\end{Def}

\begin{Rem}
If the space $\Spc(\cat T^c)$ is Noetherian then $\cat T$ has the detection property by \cite[Theorem~3.22 and Remark~3.9]{BarthelHeardSander23b}. For example, the derived category $\Der(R)$ of a commutative Noetherian ring has the detection property. However, we do not know in what generality the detection property holds; see, for example, the discussion in \cite[Section~8.1]{BillySanders17pp} and \cite[Remark~6.6]{BCHS25}.
\end{Rem}

\begin{Exa}
In \cref{rem:SH-detection} we see that the function $\Supp_{\le\infty}$ does not detect vanishing of objects in $\SHp$. However, the tensor triangular support $\Supp$ does:
\end{Exa}

\begin{Prop}\label{prop:SHp-detection}
The category $\SHp$ satisfies the detection property.
\end{Prop}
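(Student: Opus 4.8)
The plan is to leverage the explicit description of $\Spc(\SHcp)$ recorded in \cref{exa:Spc-SH} and \cref{exa:SH-non-weakly-noetherian}. Let $t\in\SHp$ with $\Supp(t)=\emptyset$; we must show $t=0$. Write $Y_n\coloneqq\overline{\{\cat P_{p,n}\}}$ for $0\le n<\infty$ and recall: these subsets, together with $\emptyset$, are exactly the Thomason subsets of $\Spc(\SHcp)$; we have $Y_0=\Spc(\SHcp)\supsetneq Y_1\supsetneq Y_2\supsetneq\cdots$ with $Y_n\setminus Y_{n+1}=\{\cat P_{p,n}\}$; and $g_Y=e_Y$ for any Thomason subset $Y$ (write $Y=Y\cap\emptyset^c$). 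In particular $e_{Y_0}=\unit$.

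The first step is to unpack the hypothesis at each point of $\Spc(\SHcp)$. For $0\le n<\infty$ the singleton $\{\cat P_{p,n}\}=Y_n\cap Y_{n+1}^c$ is weakly visible, so $\cat P_{p,n}\notin\Supp(t)$ forces $g_{\{\cat P_{p,n}\}}\otimes t=0$. The remaining point $\cat P_{p,\infty}$ is handled differently, and this is the only subtle part: since the only Thomason subset of $\Spc(\SHcp)$ \emph{not} containing $\cat P_{p,\infty}$ is $\emptyset$, every weakly visible subset containing $\cat P_{p,\infty}$ is itself a nonempty Thomason subset, hence equals $Y_N$ for some $N\ge 0$. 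Therefore $\cat P_{p,\infty}\notin\Supp(t)$ yields an $N$ with $e_{Y_N}\otimes t=g_{Y_N}\otimes t=0$, and the localization triangle $e_{Y_N}\otimes t\to t\to f_{Y_N}\otimes t$ gives $t\simeq f_{Y_N}\otimes t$.

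The second step is to telescope down the finite chain $Y_0\supset Y_1\supset\cdots\supset Y_N$. Tensoring the idempotent triangle $e_{Y_{k+1}}\to\unit\to f_{Y_{k+1}}$ with $e_{Y_k}$, and using $e_{Y_k}\otimes e_{Y_{k+1}}=e_{Y_{k+1}}$ (immediate from \eqref{eq:gW12} since $Y_{k+1}\subseteq Y_k$) together with $e_{Y_k}\otimes f_{Y_{k+1}}=g_{Y_k\cap Y_{k+1}^c}=g_{\{\cat P_{p,k}\}}$, produces an exact triangle
\[
e_{Y_{k+1}}\to e_{Y_k}\to g_{\{\cat P_{p,k}\}}\to\Sigma e_{Y_{k+1}}.
\]
Applying $(-)\otimes t$ and invoking $g_{\{\cat P_{p,k}\}}\otimes t=0$ shows that $e_{Y_{k+1}}\otimes t\to e_{Y_k}\otimes t$ is an isomorphism for each $k$. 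Hence $t=e_{Y_0}\otimes t\cong e_{Y_1}\otimes t\cong\cdots\cong e_{Y_N}\otimes t=0$, so $t=0$. Alternatively, one could finish by passing to the finite localization $F\colon\SHp\to\cat D\coloneqq\SHp/(\SHp)_{Y_N}$: its Balmer spectrum is $\Spc(\cat D^c)=\Spc(\SHcp)\setminus Y_N=\{\cat P_{p,0},\dots,\cat P_{p,N-1}\}$, a finite — hence Noetherian — space, so $\cat D$ has the detection property by \cite[Theorem~3.22]{BarthelHeardSander23b}; then \cref{cor:Supp-finite-loc} gives $\Supp(F(t))=\varphi\inv(\Supp(t))=\emptyset$, so $F(t)=0$, whence $f_{Y_N}\otimes t=0$ and therefore $t\simeq f_{Y_N}\otimes t=0$.

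I expect the only genuine obstacle to be the step dealing with the non-weakly-visible point $\cat P_{p,\infty}$: there is no idempotent $g_{\{\cat P_{p,\infty}\}}$ to test $t$ against, and the argument succeeds only because, for this particular spectrum, the weakly visible neighbourhoods of $\cat P_{p,\infty}$ are precisely the cofinite tails $Y_N=\overline{\{\cat P_{p,N}\}}$, which are Thomason. Everything else is the formal telescoping above (in chromatic terms, the iterated fracture of $L^f_{N-1}$ into its monochromatic layers $g_{\{\cat P_{p,k}\}}\otimes(-)$).
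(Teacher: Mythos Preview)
Your proof is correct and follows essentially the same approach as the paper: both extract from $\cat P_{p,\infty}\notin\Supp(t)$ an $N$ with $e_{Y_N}\otimes t=0$ (using that the weakly visible neighbourhoods of $\cat P_{p,\infty}$ are exactly the nonempty Thomason subsets $Y_N$), and then descend from $N$ to $0$ using the vanishing of the monochromatic pieces $g_{\{\cat P_{p,k}\}}\otimes t$. The paper phrases the descent as $e_{Y_{n-1}}\otimes t\simeq e_{Y_{n-1}}\otimes f_{Y_n}\otimes t=g_{\{\cat P_{p,n-1}\}}\otimes t=0$ (substituting $t\simeq f_{Y_n}\otimes t$), while you phrase it via the triangle $e_{Y_{k+1}}\to e_{Y_k}\to g_{\{\cat P_{p,k}\}}$; these are the same step. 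Your alternative ending via the finite localization to a Noetherian spectrum is also valid and is not in the paper's proof of this proposition, though a closely related manoeuvre appears in the paper's proof of the local-to-global principle for $\SHp$.
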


\begin{proof}
Suppose $\Supp(t_{(p)})=\emptyset$ for some $t_{(p)} \in \SHp$. Since $\cat P_{p,\infty} \notin \Supp(t_{(p)})$, we have $e_{\overline{\{\cat P_{p,n}\}}} \otimes t_{(p)} = 0$ for some $0\le n < \infty$ and thus $t_{(p)} \simeq f_{\overline{\{\cat P_{p,n}\}}} \otimes t_{(p)}$. If $n=0$ then $t_{(p)} = 0$ as desired. Now suppose $n>0$. By $\cat P_{p,n-1} \notin \Supp(t_{(p)})$ we have
\[
0 = e_{\overline{\{\cat P_{p,n-1}\}}} \otimes f_{\overline{\{\cat P_{p,n}\}}} \otimes t_{(p)} \simeq e_{\overline{\{\cat P_{p,n-1}\}}} \otimes t_{(p)}.
\]
An induction shows that $0 = e_{\overline{\{\cat P_{p,0}\}}} \otimes t_{(p)} \simeq t_{(p)}$, which completes the proof.
\end{proof}

\begin{Exa}\label{exa:Supp-Ip}
By \cref{rem:SH-detection} and \cref{prop:SHp-detection}, the $p$-local Brown-Comenetz dual of the sphere spectrum has support $\Supp(I_{(p)})=\{\cat P_{p,\infty}\}$.
\end{Exa}

\begin{Rem}
The following corollary says that the tensor triangular support of an object can be computed at its localizations at all closed points in the Balmer spectrum. Keep in mind the notation introduced in \cref{exa:local-cat}.
\end{Rem}

\begin{Cor}\label{cor:supplocal}
For every object $t\in\cat T$ we have
\[
\Supp(t)=\bigcup_{
\substack{
\cat M\in\Spc(\cat T^c) \\
\cat M\text{ closed}
}}\varphi_{\cat M}(\Supp(t_{\cat M}))
\]
where $\varphi_{\cat M}\coloneqq \Spc(\cat T_{\cat M}^c)\hookrightarrow\Spc(\cat T^c)$ is the map induced by the localization at $\cat M$.
\end{Cor}

\begin{proof}
By \cref{cor:Supp-finite-loc}(b), for any $t\in\cat T$ we have $\Supp(t_{\cat M})=\varphi_{\cat M}\inv(\Supp(t))$ and therefore $\varphi_{\cat M}(\Supp(t_{\cat M}))=\Supp(t)\cap\im\varphi_{\cat M}=\Supp(t)\cap \gen(\cat M)$. The result then follows from \cite[Proposition~2.11]{Balmer05a}.
\end{proof}

\begin{Exa}\label{exa:Supp-I}
From \cref{exa:Spc-SH}, \cref{exa:Supp-Ip}, and \cref{cor:supplocal} we obtain that $\Supp(I)=\SET{\cat P_{p,\infty}}{p\text{ prime}}$. In other words, the Brown-Comenetz dual of the sphere spectrum $I$ is supported on the line at infinity in the picture of $\Spc(\SHc)$.
\end{Exa}

\begin{Thm}[Detection property is algebraically local]\label{thm:detectionlocal}
The category $\cat T$ has the detection property if and only if for every closed point $\mathfrak m\in\Spech(\End^*_{\cat T}(\unit))$ the algebraic localization $\cat T_{\mathfrak m}$ has the detection property .
\end{Thm}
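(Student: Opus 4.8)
The plan is to use that each algebraic localization $\cat T_{\mathfrak m}$ is a finite localization of $\cat T$ (\cref{exa:graded-algebraic-local}), so that the compatibility of the tensor triangular support with such functors recorded in \cref{cor:Supp-finite-loc} applies. For a closed point $\mathfrak m \in \Spech(\End^*_{\cat T}(\unit))$ I write $F_{\mathfrak m}\colon\cat T\to\cat T_{\mathfrak m}$ for the localization functor, $U_{\mathfrak m}$ for its fully faithful right adjoint, and $\varphi_{\mathfrak m}\colon\Spc(\cat T_{\mathfrak m}^c)\to\Spc(\cat T^c)$ for the induced spectral map; recall that $U_{\mathfrak m}F_{\mathfrak m}\cong L_{S_{\mathfrak m}}$ as endofunctors of $\cat T$.

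For the ``only if'' direction I would argue as follows. Let $d\in\cat T_{\mathfrak m}$ with $\Supp(d)=\emptyset$. Since $F_{\mathfrak m}$ is a finite localization, \cref{cor:Supp-finite-loc}(a) gives $\Supp(U_{\mathfrak m}(d))=\varphi_{\mathfrak m}(\Supp(d))=\emptyset$, so $U_{\mathfrak m}(d)=0$ by the detection property of $\cat T$; as $U_{\mathfrak m}$ is faithful this forces $d=0$, so $\cat T_{\mathfrak m}$ has the detection property.

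For the ``if'' direction, suppose every $\cat T_{\mathfrak m}$ has the detection property and let $t\in\cat T$ satisfy $\Supp(t)=\emptyset$. For each closed point $\mathfrak m$, \cref{cor:Supp-finite-loc}(b) gives $\Supp(F_{\mathfrak m}(t))=\varphi_{\mathfrak m}\inv(\Supp(t))=\emptyset$, hence $F_{\mathfrak m}(t)=0$ by detection in $\cat T_{\mathfrak m}$; equivalently $L_{S_{\mathfrak m}}(t)=0$. Since $\cat T$ is compactly generated, this means $\Hom_{\cat T}^*(x,L_{S_{\mathfrak m}}(t))=0$ for every $x\in\cat T^c$, and by \cref{prop:algebraic-local} this group is the localization $\Hom_{\cat T}^*(x,t)_{\mathfrak m}$. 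Thus each graded $\End^*_{\cat T}(\unit)$-module $\Hom_{\cat T}^*(x,t)$ has vanishing stalk at every closed point of $\Spech(\End^*_{\cat T}(\unit))$. I would then invoke the elementary fact that a graded module $M$ over a graded-commutative ring $R$ with $M_{\mathfrak m}=0$ for all closed points $\mathfrak m\in\Spech(R)$ must vanish: if $0\neq m\in M$ is homogeneous then $\Ann_R(m)$ is a proper homogeneous ideal, hence contained in a maximal homogeneous ideal $\mathfrak m$, which is a homogeneous prime and therefore a closed point of $\Spech(R)$, and then $m/1\neq0$ in $M_{\mathfrak m}$, a contradiction. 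Applying this with $M=\Hom_{\cat T}^*(x,t)$ for all $x\in\cat T^c$ yields $\Hom_{\cat T}^*(x,t)=0$ for every compact $x$, and hence $t=0$ by compact generation.

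The argument is largely formal once \cref{cor:Supp-finite-loc} and \cref{prop:algebraic-local} are available; the one genuinely non-formal ingredient — and the step that justifies restricting attention to closed points of the homogeneous Zariski spectrum rather than to all primes — is the module-theoretic observation that closed points detect vanishing of graded modules, which itself rests on the facts that every proper homogeneous ideal is contained in a maximal one and that maximal homogeneous ideals are prime. I expect that to be the only place requiring genuine care.
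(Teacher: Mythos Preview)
Your proof is correct and follows essentially the same route as the paper's: both directions use the compatibility of $\Supp$ with the finite localization $\cat T\to\cat T_{\mathfrak m}$ (the paper cites \cref{prop:Supp-basechange}(a) where you cite \cref{cor:Supp-finite-loc}(b), but either suffices), then \cref{prop:algebraic-local} to identify $\Hom^*_{\cat T}(x,t)_{\mathfrak m}$, and finally the fact that closed points of $\Spech(\End^*_{\cat T}(\unit))$ detect vanishing of graded modules. Your explicit justification that maximal homogeneous ideals are prime is the only detail the paper leaves implicit in its last sentence.
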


\begin{proof}
If $\cat T$ satisfies the detection property then $\cat T_{\frakp}$ has the detection property for every $\frakp\in\Spech(\End^*_{\cat T}(\unit))$ by \cref{cor:Supp-finite-loc}(a). Conversely, suppose that $\cat T_{\mathfrak m}$ satisfies the detection property for every closed point $\mathfrak m\in\Spech(\End^*_{\cat T}(\unit))$. Let $t$ be an object in $\cat T$ with $\Supp(t)=\emptyset$. By \cref{prop:Supp-basechange}(a) we have $\Supp(t_{\mathfrak m})=\emptyset$ and thus $t_{\mathfrak m}=0$. It follows from \cref{prop:algebraic-local} that $\Hom(x,t)_{\mathfrak m}\cong\Hom(x,t_{\mathfrak m})=0$ for every $x\in\cat T^c$. Since $\mathfrak m$ ranges over all the closed points, $t=0$, which completes the proof.
\end{proof}

\begin{Exa}\label{exa:SH-detection}
The stable homotopy category $\SHp$ of $p$-local spectra satisfies the detection property (\cref{prop:SHp-detection}). The theorem above thus implies that the stable homotopy category $\SH$ of all spectra satisfies the detection property.
\end{Exa}

\begin{Cor}\label{cor:dissonant}
Any spectrum whose support is contained in $\SET{\cat P_{p,\infty}}{p \text{ prime}}$ is dissonant.
\end{Cor}

\begin{proof}
According to \cite[Definition~4.1]{Ravenel84}, a spectrum $t\in\SH$ is called dissonant if $t \otimes K(p,n)=0$ for every prime number $p$ and every $n$ with $0 \le n < \infty$. Suppose that $t\in \SH$ is a spectrum with $\Supp(t) \subseteq \SET{\cat P_{p,\infty}}{p \text{ prime}}$. It follows that for any prime number $p$ and any $n$ with $0\le n<\infty$ we have
\[
\emptyset=\Supp(t) \cap \gen(\cat P_{p,n})=\Supp(t \otimes f_{Y_{\cat P_{p,n}}})
\]
where the second equality is due to \cref{prop:Supp-prop}(f). In light of \cref{exa:SH-detection}, we have $t \otimes f_{Y_{\cat P_{p,n}}}=0$, which is equivalent to $t \in \Loc{\cat P_{p,n}}$ by \eqref{eq:LocP}. It then follows from the definition of $\cat P_{p,n}$ that $t \otimes K(p,n) = 0$. This is true for every prime number $p$ and every $n$ with $0 \le n < \infty$. Therefore $t$ is dissonant.
\end{proof}

\section{The local-to-global principle}\label{sec:ltg}
We now introduce a local-to-global principle for a rigidly-compactly generated tensor triangulated category which does not require any topological hypothesis on the Balmer spectrum.

\begin{Def}[The local-to-global principle]\label{def:ltg}
We say that $\cat T$ satisfies the \emph{local-to-global principle} if
\[ 
\Loco{t} = \Loco{t \otimes g_{W_i} \mid i\in I}
\]
for every object $t \in \cat T$ and every cover of $\Spc(\cat T^c)$ by weakly visible subsets $W_i$.
\end{Def}

\begin{Rem}\label{rem:ltg-equivalence}
By \cite[Lemma~3.6]{BarthelHeardSander23b} the definition above is equivalent to
\[
\unit \in \Loco{g_{W_i} \mid i\in I}
\]
for every collection $\{W_i\}_{i\in I}$ of weakly visible subsets such that $\bigcup_{i\in I}W_i=\Spc(\cat T^c)$.
If $\Spc(\cat T^c)$ is weakly Noetherian then every point $\cat P\in\Spc(\cat T^c)$ is weakly visible and thus we can always consider the cover $\bigcup_{\cat P\in\Spc(\cat T^c)}\{\cat P\}=\Spc(\cat T^c)$. In this case, the local-to-global principle is equivalent to
\[
\Loco{t} = \Loco{t \otimes g_{\cat P} \mid \cat P\in \Spc(\cat T^c)}.
\]
Hence it recovers \cite[Definition~3.8]{BarthelHeardSander23b}.
\end{Rem}

\begin{Prop}\label{prop:SHp-ltg}
The $p$-local stable homotopy category $\cat S \coloneqq \SHp$ satisifies the local-to-global principle.
\end{Prop}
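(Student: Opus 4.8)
The plan is to verify the reformulation of the local-to-global principle given in \cref{rem:ltg-equivalence}: it suffices to show that $\unit \in \Loco{g_{W_i} \mid i \in I}$ for every family $\{W_i\}_{i\in I}$ of weakly visible subsets of $\Spc(\SHcp)$ whose union is all of $\Spc(\SHcp)$. Recall from \cref{exa:Spc-SH} and \cref{exa:SH-non-weakly-noetherian} that $\Spc(\SHcp)$ consists of the points $\cat P_{p,0}, \cat P_{p,1}, \dots$ together with $\cat P_{p,\infty}$, that its nonempty Thomason subsets are exactly the sets $\overline{\{\cat P_{p,n}\}} = \{\cat P_{p,n}, \cat P_{p,n+1}, \dots, \cat P_{p,\infty}\}$ for $0 \le n < \infty$ (so that $\overline{\{\cat P_{p,0}\}} = \Spc(\SHcp)$), and that $\cat P_{p,\infty}$ — which lies in every nonempty Thomason subset — is the unique point that is not weakly visible.

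First I would locate $\cat P_{p,\infty}$ in the cover. Choose $i_0 \in I$ with $\cat P_{p,\infty} \in W_{i_0}$ and write $W_{i_0} = U \cap V^c$ with $U$ and $V$ Thomason. Since $\cat P_{p,\infty} \notin V$ and every nonempty Thomason subset contains $\cat P_{p,\infty}$, we must have $V = \emptyset$; hence $W_{i_0} = U = \overline{\{\cat P_{p,n}\}}$ for some $0 \le n < \infty$, so $g_{W_{i_0}} = e_{\overline{\{\cat P_{p,n}\}}}$. If $n = 0$ then $W_{i_0} = \Spc(\SHcp)$ and $g_{W_{i_0}} = \unit$, so there is nothing to prove; assume from now on that $1 \le n < \infty$.

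Next I would peel off the finitely many remaining points by a telescope of localization triangles. From $e_{\overline{\{\cat P_{p,n}\}}} \to \unit \to f_{\overline{\{\cat P_{p,n}\}}}$ we get $\unit \in \Loco{e_{\overline{\{\cat P_{p,n}\}}},\, f_{\overline{\{\cat P_{p,n}\}}}}$. For each $1 \le k \le n$, tensoring the localization triangle for $\overline{\{\cat P_{p,k-1}\}}$ with $f_{\overline{\{\cat P_{p,k}\}}}$ and using the identities $f_{\overline{\{\cat P_{p,k-1}\}}} \otimes f_{\overline{\{\cat P_{p,k}\}}} = f_{\overline{\{\cat P_{p,k-1}\}}}$ (since $\overline{\{\cat P_{p,k}\}} \subseteq \overline{\{\cat P_{p,k-1}\}}$) and $e_{\overline{\{\cat P_{p,k-1}\}}} \otimes f_{\overline{\{\cat P_{p,k}\}}} = g_{\{\cat P_{p,k-1}\}}$ (both instances of \eqref{eq:gW12}) shows $f_{\overline{\{\cat P_{p,k}\}}} \in \Loco{g_{\{\cat P_{p,k-1}\}},\, f_{\overline{\{\cat P_{p,k-1}\}}}}$. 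Iterating down to $k = 1$, where $f_{\overline{\{\cat P_{p,0}\}}} = f_{\Spc(\SHcp)} = 0$, gives $f_{\overline{\{\cat P_{p,n}\}}} \in \Loco{g_{\{\cat P_{p,0}\}}, \dots, g_{\{\cat P_{p,n-1}\}}}$ and hence $\unit \in \Loco{g_{W_{i_0}},\, g_{\{\cat P_{p,0}\}}, \dots, g_{\{\cat P_{p,n-1}\}}}$. Finally, for each $0 \le j < n$ the point $\cat P_{p,j}$ is weakly visible, so I may choose $i_j \in I$ with $\cat P_{p,j} \in W_{i_j}$; then $g_{\{\cat P_{p,j}\}} = g_{\{\cat P_{p,j}\} \cap W_{i_j}} = g_{\{\cat P_{p,j}\}} \otimes g_{W_{i_j}} \in \Loco{g_{W_{i_j}}}$ by \eqref{eq:gW12}. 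Thus every generator appearing above lies in $\Loco{g_{W_i} \mid i \in I}$, so $\unit \in \Loco{g_{W_i} \mid i \in I}$, as required.

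The main obstacle — and really the only nonformal point — is the observation in the second step that a weakly visible subset containing $\cat P_{p,\infty}$ is forced to be one of the Thomason subsets $\overline{\{\cat P_{p,n}\}}$. This is exactly what allows one to replace the non-weakly-Noetherian space $\Spc(\SHcp)$ by its finite truncations $\{\cat P_{p,0}, \dots, \cat P_{p,n-1}\}$, after which everything is a routine manipulation of localization triangles. Alternatively, once this reduction is in hand, one could instead invoke the local-to-global principle for the finite — hence Noetherian — Balmer spectrum of the finite localization $f_{\overline{\{\cat P_{p,n}\}}} \otimes \SHp$ via \cite[Theorem~3.21]{BarthelHeardSander23b}, rather than telescoping by hand.
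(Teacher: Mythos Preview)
Your proof is correct and follows essentially the same approach as the paper: both identify a weakly visible set $W_{i_0}$ containing $\cat P_{p,\infty}$, observe that it is forced to be a Thomason closed subset $\overline{\{\cat P_{p,n}\}}$, and then reduce to the finitely many remaining points. The only difference is that the paper handles the finite remainder by passing to the finite localization $f_{\overline{\{\cat P_{p,n}\}}}\otimes\SHp$ and invoking the Noetherian local-to-global principle \cite[Theorem~3.22]{BarthelHeardSander23b} together with \cite[Lemma~3.16]{BarthelHeardSander23b}, whereas you telescope the localization triangles explicitly---an alternative you yourself note in your final paragraph.
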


\begin{proof}
For any weakly visible cover $\bigcup_{i\in I}W_i=\Spc(\cat S^c)$, there exists some $j\in I$ such that $\cat P_{p,\infty}\in W_j$. If $W_j=\Spc(\cat S^c)$ then $\unit=g_{W_j} \in \Loco{g_{W_i} \mid i\in I}$. Now we assume $W_j$ is proper in $\Spc(\cat S^c)$. We then have $W_j = \overline{\{\cat P_{p,n+1}\}}=\supp(\cat P_{p,n})$ for some $0 \le n<\infty$ and therefore $g_{W_j} \simeq e_{\overline{\{\cat P_{p,n+1}\}}}$. Let $F_n \colon \cat S \to \cat S_n$ denote the finite localization associated to the Thomason subset $W_j$. We write $e_n$ for $e_{\overline{\{\cat P_{p,n+1}\}}}$ and~$f_n$ for $f_{\overline{\{\cat P_{p,n+1}\}}}$. Since $\Spc(\cat S^c_n)$ consists of only finitely many points, $\cat S_n$ satisfies the local-to-global principle by \cite[Theorem~3.22]{BarthelHeardSander23b}. Therefore
\[
F_n(\unit) \in \Loco{F_n(f_n\otimes g_{W_i}) \mid i \in I}.
\]
By \cite[Lemma~3.16]{BarthelHeardSander23b} we obtain
\[
\unit \in \Loco{e_n,\{f_n\otimes g_{W_i}\}_{i \in I}} = \Loco{g_{W_i} \mid i \in I}.\qedhere
\]
\end{proof}

\begin{Rem}\label{rem:ltg-detection}
The local-to-global principle implies the detection property: Let $t\in \cat T$ be an object with $\Supp(t)=\emptyset$. For every point $\cat P\in\Spc(\cat T^c)$ there exists a weakly visible subset $W_{\cat P}\ni\cat P$ with $t \otimes g_{W_{\cat P}}=0$. By the local-to-global principle we have
\[
t \in \Loco{ t \otimes g_{W_{\cat P}} \mid \cat P \in \Spc(\cat T^c) }=0
\]
which forces $t=0$.
\end{Rem}

\begin{Rem}\label{rem:Hochweakscatter}
Recall from \cref{def:hoch-scatter} that the Balmer spectrum $\Spc(\cat T^c)$ is said to be Hochster weakly scattered if its Hochster dual is weakly scattered. This means that for every proper Thomason subset $Y\subsetneq \Spc(\cat T^c)$, there exist a point $\cat P\notin Y$ and a Thomason subset $U\subseteq \Spc(\cat T^c)$ such that
\[
\cat P\in U\cap Y^c\subseteq \gen(\cat P).
\]
\end{Rem}

\begin{Thm}\label{thm:Hochweakscatter}
If $\Spc(\cat T^c)$ is Hochster weakly scattered then $\cat T$ satisfies the local-to-global principle.
\end{Thm}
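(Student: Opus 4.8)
The plan is to establish the local-to-global principle in the equivalent form of \cref{rem:ltg-equivalence}: fix a cover $\{W_i\}_{i\in I}$ of $X\coloneqq\Spc(\cat T^c)$ by weakly visible subsets, put $\cat L\coloneqq\Loco{g_{W_i}\mid i\in I}$, and show $\unit\in\cat L$. I would work with the collection $\cal F$ of Thomason subsets $Y\subseteq X$ with $e_Y\in\cat L$, where $e_Y$ (and its complement $f_Y$) is the idempotent attached to $Y$, so that $e_Y\simeq g_Y$. Since $e_\emptyset=0$ we have $\emptyset\in\cal F$, and the goal becomes $X\in\cal F$ (as $e_X=\unit$). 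I will freely use that, by \eqref{eq:gW12}, $e_{Z_1}\otimes e_{Z_2}\simeq e_{Z_1\cap Z_2}$ and $e_{Z_1}\otimes e_{Z_1\cup Z_2}\simeq e_{Z_1}$ for Thomason subsets $Z_1,Z_2$, and that there is a localization triangle $e_Y\to\unit\to f_Y\to\Sigma e_Y$ for every Thomason $Y$.

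The first step is to check that $\cal F$ is closed under arbitrary unions. For a finite union, tensoring $e_{Y_1}\to\unit\to f_{Y_1}$ by $e_{Y_1\cup Y_2}$ gives an exact triangle $e_{Y_1}\to e_{Y_1\cup Y_2}\to f_{Y_1}\otimes e_{Y_1\cup Y_2}\to\Sigma e_{Y_1}$; here $f_{Y_1}\otimes e_{Y_1\cup Y_2}\simeq f_{Y_1}\otimes e_{Y_2}$, since both compute $g_{Y_2\cap Y_1^c}$ via the decompositions $Y_2\cap Y_1^c=(Y_1\cup Y_2)\cap Y_1^c=Y_2\cap Y_1^c$ and $g_W$ is independent of the decomposition, so this term lies in the ideal $\cat L$ whenever $e_{Y_2}$ does, whence $e_{Y_1\cup Y_2}\in\cat L$. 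For a general Thomason union $Y=\bigcup_j Y_j$, every Thomason closed $Z\subseteq Y$ is quasi-compact in the Hochster dual $X^*$, hence contained in a finite subunion $Y_{j_1}\cup\dots\cup Y_{j_n}\in\cal F$, so $e_Z\simeq e_Z\otimes e_{Y_{j_1}\cup\dots\cup Y_{j_n}}\in\cat L$; since the localizing ideal generated by these $e_Z$ contains $e_Y$, we get $e_Y\in\cat L$. Consequently $\cal F$ has a largest element $Y_\ast\coloneqq\bigcup_{Y\in\cal F}Y$, and it remains to prove $Y_\ast=X$.

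Suppose not, so that $Y_\ast$ is a proper Thomason subset. By Hochster weak scatteredness (\cref{rem:Hochweakscatter}) there are a point $\cat P\notin Y_\ast$ and a Thomason subset $U$ with $\{\cat P\}\subseteq U\cap Y_\ast^c\subseteq\gen(\cat P)$, and since $\{W_i\}$ covers $X$ we may pick $i_0$ with $\cat P\in W_{i_0}$; write $W_{i_0}=U_{i_0}\cap V_{i_0}^c$ with $U_{i_0},V_{i_0}$ Thomason. This is the step I expect to be the main obstacle: membership of the generator $g_{W_{i_0}}$ in $\cat L$ gives no control over the idempotent $e_{V_{i_0}}$ of an arbitrary presentation of $W_{i_0}$, so a naive imitation of the Noetherian-case induction stalls. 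The key observation that unblocks it is that $\cat P\in W_{i_0}$ forces $\cat P\notin V_{i_0}$, whence $\gen(\cat P)\cap V_{i_0}=\emptyset$ because $V_{i_0}$ is specialization closed; combined with $U\cap Y_\ast^c\subseteq\gen(\cat P)$ this makes $V_{i_0}^c$ redundant in the intersection below. Thus, setting $A\coloneqq U\cap U_{i_0}$ (Thomason),
\[
W\coloneqq A\cap Y_\ast^c=(U\cap Y_\ast^c)\cap(U_{i_0}\cap V_{i_0}^c)=(U\cap Y_\ast^c)\cap W_{i_0},
\]
so $g_W\simeq g_{U\cap Y_\ast^c}\otimes g_{W_{i_0}}\in\cat L$ by \eqref{eq:gW12}; reading off the decompositions $W=A\cap Y_\ast^c=(Y_\ast\cup A)\cap Y_\ast^c$ gives $g_W\simeq e_A\otimes f_{Y_\ast}\simeq e_{Y_\ast\cup A}\otimes f_{Y_\ast}$.

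Finally, tensoring $e_{Y_\ast}\to\unit\to f_{Y_\ast}$ by $e_{Y_\ast\cup A}$ produces an exact triangle
\[
e_{Y_\ast}\to e_{Y_\ast\cup A}\to f_{Y_\ast}\otimes e_{Y_\ast\cup A}\to\Sigma e_{Y_\ast}
\]
with $e_{Y_\ast}\in\cat L$ (as $Y_\ast\in\cal F$) and $f_{Y_\ast}\otimes e_{Y_\ast\cup A}\simeq g_W\in\cat L$ by the previous step; hence $e_{Y_\ast\cup A}\in\cat L$, i.e.\ $Y_\ast\cup A\in\cal F$. But $\cat P\in A\setminus Y_\ast$, so $Y_\ast\cup A\supsetneq Y_\ast$, contradicting the maximality of $Y_\ast$. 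Therefore $Y_\ast=X$ and $\unit=e_X\in\cat L$, which is the local-to-global principle. Beyond \eqref{eq:gW12}, the well-definedness of $g_W$, and the localization triangles, the only inputs are the quasi-compactness of Thomason closed subsets in $X^*$ and the observation $\gen(\cat P)\cap V_{i_0}=\emptyset$, which is exactly what converts the weakly-isolated-point hypothesis into a usable gluing triangle.
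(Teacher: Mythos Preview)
Your proof is correct and follows essentially the same approach as the paper's. The only cosmetic difference is in how the maximal Thomason subset is produced: the paper defines $Y\coloneqq\bigcup_{a\in\cat L\cap\cat T^c}\supp(a)$ directly (using that $e_{\supp(a)}\in\cat L\iff a\in\cat L$ and citing \cite[Remark~1.26]{BarthelHeardSander23b} for $e_Y\in\cat L$), whereas you build the same set as $Y_\ast=\bigcup\cal F$ after explicitly verifying that $\cal F$ is closed under unions; the contradiction step --- in particular the key observation $\gen(\cat P)\subseteq V_{i_0}^c$ and the resulting gluing triangle --- is identical in both arguments.
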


\begin{proof}
Let $\{W_i\}_{i\in I}$ be a cover of $\Spc(\cat T^c)$ by weakly visible subsets. Consider the localizing ideal $\cat L \coloneqq \Loco{g_{W_i}\mid i\in I}$ and define $Y\coloneqq\bigcup_{a\in\cat L\cap \cat T^c}\supp(a)$. Note that $Y$ is a Thomason subset and $e_Y\in\Loco{e_{\supp(a)}\mid a\in \cat L\cap \cat T^c}$ by \cite[Remark~1.26]{BarthelHeardSander23b}. Note also that for any $a\in\cat T^c$ we have $e_{\supp(a)}\in\cat L$ if and only if~$a\in \cat L$. Thus $e_Y\in\cat L$. It then remains to show $Y=\Spc(\cat T^c)$. Suppose \emph{ab absurdo} that $Y\subsetneq\Spc(\cat T^c)$. By assumption, there exist $\cat P\in Y^c$ and $b\in\cat T^c$ such that
\[
\cat P\in\supp(b)\cap Y^c\subseteq \gen(\cat P).
\]
Choose $j\in I$ such that $\cat P\in W_j=U\cap V^c$, where~$U$ and $V$ are Thomason subsets. By intersecting with $U$ we may assume that $\supp(b)$ is contained in $U$. Hence
\[
\supp(b)\cap Y^c\subseteq U\cap\gen(\cat P)\subseteq U\cap V^c.
\]
It follows that $e_{\supp(b)}\otimes f_Y\in\Loco{g_{W_j}}\subseteq \cat L$. Now the exact triangle
\[
e_{\supp(b)}\otimes e_Y\to	 e_{\supp(b)}\to e_{\supp(b)}\otimes f_Y
\]
implies $e_{\supp(b)}\in\cat L$ since the other two terms are in $\cat L$. Thus $\cat P\in\supp(b)\subseteq Y$ by the definition of $Y$, which is absurd.
\end{proof}

\begin{Rem}
\cref{thm:Hochweakscatter} strengthens \cite[Theorem~3.22]{BarthelHeardSander23b} which states that if $\Spc(\cat T^c)$ is Noetherian then $\cat T$ satisfies the local-to-global principle; see \cref{rem:hoch-scatter}. \cref{thm:Hochweakscatter} also strengthens \cite[Theorems~7.9 and 7.18]{BillySanders17pp} which prove:
\begin{enumerate}
\item If $\Spc(\cat T^c)$ is Hochster weakly scattered then $\cat T$ has the detection property.
\item If $\Spc(\cat T^c)$ is Hochster scattered and $\cat T$ admits a monoidal model then $\cat T$ satisfies the local-to-global principle.
\end{enumerate}
\end{Rem}

\begin{Rem}\label{rem:SH-ltg}
The Balmer spectrum $\Spc(\SHcp)$ satisfies the Hochster weakly scattered condition except for the $Y=\emptyset$ case (in the notation of \cref{rem:Hochweakscatter}). Nevertheless, for this example the proof of \cref{thm:Hochweakscatter} still goes through because the Thomason subset $Y$ constructed in the proof is nonempty, as shown in \cref{prop:SHp-ltg}. The stable homotopy category $\SH$ satisfies the local-to-global principle for similar reasons.
\end{Rem}

\begin{Rem}\label{rem:cosupp}
We end this section with a few words on the theory of cosupport. Dual to the tensor triangular support, a theory of tensor triangular cosupport was systematically developed (under the assumption that the Balmer spectrum is weakly Noetherian) in \cite{BCHS25}, building on prior work in \cite{HoveyStrickland99,Neeman11,BensonIyengarKrause12}. In particular, one can define the notions of costratification, colocal-to-global principle, and codetection property in terms of cosupport. Their work demonstrates that to completely understand a big tt-category one needs to consider both the support and the cosupport. Moreover, they discovered surprising relations between the theories of support and cosupport. For example, for a rigidly-compactly generated tt-category $\cat T$ with weakly Noetherian spectrum, the colocal-to-global principle, the codetection property, and the local-to-global principle are all equivalent \cite[Theorem~6.4]{BCHS25}.

We propose here a notion of cosupport which works beyond the weakly Noetherian setting. The \emph{tensor triangular cosupport} of an object $t\in \cat T$ is defined to be the set
\[
\Cosupp(t) \coloneqq \left\{ \cat P\in\Spc(\cat T^c) \left|
\begin{gathered}
[g_W,t] \neq 0 \text{ for any weakly}\\
\text{ visible subset }W\text{ containing }\cat P
\end{gathered} \right. \right\}
\]
where $[-,-]$ denotes the internal hom. This recovers the notion of cosupport in \cite[Definition~4.23]{BCHS25} when $\Spc(\cat T^c)$ is weakly Noetherian. Similarly to \cref{def:ltg}, we say that $\cat T$ satisfies the \emph{colocal-to-global principle} if we have an equality of colocalizing coideals
\[ 
\Colocid{t} = \Colocid{[g_{W_i},t] \mid i\in I}
\]
for every object $t \in \cat T$ and every cover of $\Spc(\cat T^c)$ by weakly visible subsets $W_i$. Again, this notion of colocal-to-global principle specializes to the one in \cite[Definition~4.23]{BCHS25} if $\Spc(\cat T^c)$ is weakly Noetherian. With these definitions, several results in \cite{BCHS25} still hold without the weakly Noetherian hypothesis. For example:
\end{Rem}

\begin{Thm}[Barthel--Castellana--Heard--Sanders]
The following statements are equivalent for a rigidly-compactly generated tt-category $\cat T$:
\begin{enumerate}
\item $\cat T$ satisfies the codetection property.
\item $\cat T$ satisfies the local-to-global principle.
\item $\cat T$ satisfies the colocal-to-global principle.
\end{enumerate}
\end{Thm}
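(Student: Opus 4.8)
The plan is to establish the cycle of implications (c) $\Rightarrow$ (a) $\Rightarrow$ (b) $\Rightarrow$ (c), where (a), (b), (c) refer to the codetection property, the local-to-global principle, and the colocal-to-global principle, respectively. The argument is the exact cosupport-theoretic mirror of the proof of \cite[Theorem~6.4]{BCHS23pp}, with the weakly Noetherian hypothesis replaced everywhere by the passage from points to weakly visible subsets, as in \cref{rem:cosupp}.

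For (c) $\Rightarrow$ (a) I would run the cosupport analogue of \cref{rem:ltg-detection}: if $\Cosupp(t)=\emptyset$, pick for each $\cat P\in\Spc(\cat T^c)$ a weakly visible $W_{\cat P}\ni\cat P$ with $[g_{W_{\cat P}},t]=0$; these subsets cover $\Spc(\cat T^c)$, so the colocal-to-global principle gives $\Colocid{t}=\Colocid{[g_{W_{\cat P}},t]\mid \cat P}=0$, whence $t=0$. This step is purely formal. For (a) $\Rightarrow$ (b), fix a weakly visible cover $\{W_i\}_{i\in I}$; by \cref{rem:ltg-equivalence} it suffices to show $\unit\in\cat L:=\Loco{g_{W_i}\mid i\in I}$. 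Since $\cat T$ is rigidly-compactly generated, $\cat L=\Loc{g_{W_i}\otimes a\mid i\in I,\ a\in\cat T^c}$ is generated as a localizing subcategory by a set, hence is strictly localizing, so there is a Bousfield localization triangle $\varGamma\unit\to\unit\to L\unit$ with $L\unit$ right-orthogonal to $\cat L$. As $a\otimes g_{W_i}\in\cat L$ for every compact $a$, and compact objects detect vanishing of internal homs, orthogonality forces $[g_{W_i},L\unit]=0$ for all $i$; hence $\Cosupp(L\unit)=\emptyset$, and the codetection property gives $L\unit=0$, i.e.\ $\unit\in\cat L$.

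For (b) $\Rightarrow$ (c), fix $t\in\cat T$ and a weakly visible cover $\{W_i\}_{i\in I}$; the inclusion $\Colocid{[g_{W_i},t]\mid i}\subseteq\Colocid{t}$ is automatic since each $[g_{W_i},t]$ lies in the coideal $\Colocid{t}$. For the reverse I would introduce
\[
\cat N:=\SET{Z\in\cat T}{[Z,t]\in\Colocid{[g_{W_i},t]\mid i\in I}}.
\]
Because $[-,t]$ is exact and converts coproducts into products, $\cat N$ is a localizing subcategory; and from the symmetric-monoidal adjunction $[Z\otimes x,t]\cong[x,[Z,t]]$ together with the fact that a colocalizing coideal is closed under $[x,-]$, one gets that $\cat N$ is a localizing \emph{ideal}. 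Since each $g_{W_i}\in\cat N$, the local-to-global principle (in the form of \cref{rem:ltg-equivalence}) yields $\unit\in\Loco{g_{W_i}\mid i}\subseteq\cat N$, so $t=[\unit,t]\in\Colocid{[g_{W_i},t]\mid i}$, giving the equality of colocalizing coideals.

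I expect the only real subtleties to be the verification that $\cat N$ is a tensor-ideal in the last step — this is where the "coideal" closure in the definition of $\Colocid{-}$ is genuinely used — and the orthogonality computation $[g_{W_i},L\unit]=0$ in the middle step, which has to be routed through the compact objects precisely because the big idempotents $g_{W_i}$ need not be rigid, so one cannot simply pull $[g_{W_i},-]$ through a tensor factor. Everything else is a routine unwinding of the internal-hom/adjunction bookkeeping.
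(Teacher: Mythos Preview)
Your proposal is correct and follows exactly the same cycle of implications (a)$\Rightarrow$(b)$\Rightarrow$(c)$\Rightarrow$(a) that the paper invokes by citing \cite[Theorem~6.4]{BCHS23pp}; you have simply written out the \emph{mutatis mutandis} details, replacing the points $\cat P$ (and their idempotents $g_{\cat P}$) of the weakly Noetherian argument by weakly visible subsets $W_i$ (and their idempotents $g_{W_i}$), which is precisely what the paper intends.
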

\begin{proof}
The proof of (a)$\implies$(b)$\implies$(c)$\implies$(a) in \cite[Theorem~6.4]{BCHS25} carries over, \emph{mutatis mutandis}.
\end{proof}

\section{Stratification implies weakly Noetherian}\label{sec:strat}
It is natural to ask whether the tensor triangular support can be used to classify the localizing ideals of $\cat T$. More precisely:

\begin{Def}\label{def:strat}
We say that $\cat T$ is \emph{stratified} if the map
\[
\begin{split}
\big\{ \text{localizing ideals of $\cat T$}\big\} &\to \big\{\text{localizing closed subsets of $\Spc(\cat T^c)$}\big\} \\
\cat L &\mapsto \Supp(\cat L)
\end{split}
\]
is a bijection.
\end{Def}

\begin{Rem}
The map above is well-defined by \cref{prop:Supp-loc-closed}(b). If $\Spc(\cat T^c)$ is weakly Noetherian then \cref{def:strat} recovers \cite[Definition~4.4]{BarthelHeardSander23b}. In fact, we will see that if $\cat T$ is stratified in the sense of \cref{def:strat} then $\Spc(\cat T^c)$ is necessarily weakly Noetherian. Our proof is based on the comparison between the tensor triangular support and the homological support, which was studied in detail in the weakly Noetherian context in \cite{BarthelHeardSander23a}.
\end{Rem}

\begin{Rec}
Recall that each homological prime $\cat B \in \Spch(\cat T^c)$ gives rise to a pure-injective object $E_{\cat B} \in \cat T$ and the homological support of an object $t \in \cat T$ is given by
\[
\Supph(t) \coloneqq \SET{ \cat B \in \Spch(\cat T^c) }{ [t,E_{\cat B}] \neq 0}
\]
where $[-,-]$ denotes the internal hom. For every homological prime $\cat B \in \Spch(\cat T^c)$ we have $\Supph(E_{\cat B})=\{\cat B\}$. By \cite[Theorem~1.2]{Balmer20_bigsupport} the homological support satisfies the tensor product formula
\[
\Supph(t_1\otimes t_2) = \Supph(t_1) \cap \Supph(t_2) \quad \text{for any } t_1,t_2 \in \cat T.
\]
Moreover, there exists a surjective continuous map $\phi \colon \Spch(\cat T^c) \twoheadrightarrow \Spc(\cat T^c)$; see \cite[Corollary~3.9]{Balmer20_nilpotence}.
\end{Rec}

\begin{Lem}\label{lem:Supph-gW}
If $W\subseteq\Spc(\cat T^c)$ is weakly visible then $\Supph(g_W)=\phi\inv(W)$.
\end{Lem}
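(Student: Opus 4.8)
I would use the tensor product formula for homological support to reduce the statement to a computation of $\Supph$ on the ``Thomason idempotents''. Writing $W = U \cap V^c$ with $U,V$ Thomason subsets, we have $g_W = e_U \otimes f_V$ by definition, so the tensor product formula gives
\[
\Supph(g_W) = \Supph(e_U) \cap \Supph(f_V),
\]
and since taking preimages along $\phi$ commutes with intersections and complements, it suffices to prove that for every Thomason subset $Y \subseteq \Spc(\cat T^c)$ one has $\Supph(e_Y) = \phi\inv(Y)$ and $\Supph(f_Y) = \phi\inv(Y^c)$. I would then observe that these two sets automatically form a partition of $\Spch(\cat T^c)$: applying the exact contravariant functor $[-,E_{\cat B}]$ to the localization triangle $e_Y \to \unit \to f_Y$ shows that $[\unit, E_{\cat B}] \neq 0$ forces $[e_Y, E_{\cat B}] \neq 0$ or $[f_Y, E_{\cat B}] \neq 0$, and since $\Supph(\unit) = \Spch(\cat T^c)$ (because $\{\cat B\} = \Supph(E_{\cat B}) = \Supph(\unit) \cap \{\cat B\}$ by the tensor product formula, for every $\cat B$) we get $\Supph(e_Y) \cup \Supph(f_Y) = \Spch(\cat T^c)$; on the other hand $e_Y \otimes f_Y \cong 0$ together with the tensor product formula give $\Supph(e_Y) \cap \Supph(f_Y) = \emptyset$. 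As $\phi\inv(Y)$ and $\phi\inv(Y^c)$ also partition $\Spch(\cat T^c)$, it is therefore enough to prove the single equality $\Supph(e_Y) = \phi\inv(Y)$.

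For that equality I would invoke two standard inputs: first, $\Supph(a) = \phi\inv(\supp a)$ for every compact object $a \in \cat T^c$ \cite{Balmer20_bigsupport}; and second, the structure of the torsion idempotent, namely that $e_Y$ lies in the localizing ideal $\Loco{\cat T^c_Y}$ generated by the compacts $\cat T^c_Y \coloneqq \SET{a \in \cat T^c}{\supp(a) \subseteq Y}$, and that $a \otimes e_Y \cong a$ for any such $a$ (see \cite{BalmerFavi11,BarthelHeardSander23b}). For the inclusion $\phi\inv(Y) \subseteq \Supph(e_Y)$, note that $Y = \bigcup_{a \in \cat T^c_Y} \supp(a)$, hence $\phi\inv(Y) = \bigcup_{a \in \cat T^c_Y} \Supph(a) = \bigcup_{a \in \cat T^c_Y} \Supph(a \otimes e_Y) \subseteq \Supph(e_Y)$. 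For the reverse inclusion, fix $\cat B \notin \phi\inv(Y)$; then for each $a \in \cat T^c_Y$ we have $\cat B \notin \phi\inv(\supp a) = \Supph(a)$, so $\Supph(a \otimes E_{\cat B}) = \Supph(a) \cap \{\cat B\} = \emptyset$ and therefore $a \otimes E_{\cat B} = 0$ by the detection property for homological support \cite{Balmer20_bigsupport}. Now the subcategory $\SET{t \in \cat T}{t \otimes E_{\cat B} = 0}$ is a thick $\otimes$-ideal which is moreover closed under arbitrary coproducts, because $-\otimes E_{\cat B}$ is a left adjoint (with right adjoint $[E_{\cat B},-]$); hence it is a localizing ideal containing $\cat T^c_Y$, so it contains $\Loco{\cat T^c_Y}$ and in particular $e_Y$. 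Thus $e_Y \otimes E_{\cat B} = 0$, i.e.\ $\cat B \notin \Supph(e_Y)$.

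Assembling the pieces gives $\Supph(e_Y) = \phi\inv(Y)$, whence $\Supph(f_Y) = \phi\inv(Y^c)$ by the partition observation, and finally $\Supph(g_W) = \phi\inv(U) \cap \phi\inv(V^c) = \phi\inv(W)$. Note that no weakly Noetherian hypothesis enters anywhere, so this genuinely extends the corresponding computation of \cite{BarthelHeardSander23a}. The main obstacle is not a single hard estimate but rather marshalling the right toolkit --- the tensor product formula and the detection property for homological support, the identity $\Supph = \phi\inv \circ \supp$ on compact objects, and the fact that $e_Y$ belongs to the localizing ideal generated by the compacts supported on $Y$. The one point deserving genuine care is that $\SET{t}{t \otimes E_{\cat B} = 0}$ is closed under \emph{arbitrary} coproducts even though $E_{\cat B}$ is a large, non-compact object; this is precisely where the rigidly-compactly generated (in particular closed monoidal) structure of $\cat T$ is used.
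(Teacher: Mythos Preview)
Your proof is correct and follows essentially the same approach as the paper: reduce via the tensor product formula to computing $\Supph(e_Y)$ and $\Supph(f_Y)$ for Thomason $Y$. The only difference is that the paper dispatches this computation with a one-line citation to \cite[Lemma~3.8]{BarthelHeardSander23a}, whereas you spell out the argument from first principles (the partition observation, $\Supph=\phi^{-1}\circ\supp$ on compacts, and the localizing-ideal argument via detection for homological support); what you have written is in effect a proof of that cited lemma.
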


\begin{proof}
Apply \cite[Lemma~3.8]{BarthelHeardSander23a} and the tensor-product formula.
\end{proof}

\begin{Lem}\label{lem:Supph-inclusion}
For every $t \in \cat T$ we have $\phi(\Supph(t)) \subseteq \Supp(t)$.
\end{Lem}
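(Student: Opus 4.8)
The plan is to prove the contrapositive: if $\cat P \notin \Supp(t)$ then no homological prime lying over $\cat P$ can be in $\Supph(t)$. So fix a homological prime $\cat B \in \Spch(\cat T^c)$ with $\phi(\cat B) = \cat P$ and assume $\cat P \notin \Supp(t)$. By \cref{def:Supp} there is a weakly visible subset $W = U \cap V^c$ containing $\cat P$ with $g_W \otimes t = 0$. The goal is to deduce $[t, E_{\cat B}] = 0$, i.e. $\cat B \notin \Supph(t)$.

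The key step is to relate $E_{\cat B}$ to the idempotent $g_W$. Since $\cat P \in W$ and $\phi(\cat B) = \cat P$, we have $\cat B \in \phi\inv(W) = \Supph(g_W)$ by \cref{lem:Supph-gW}. Now I would use the tensor-product formula for homological support together with the fact that $\Supph(E_{\cat B}) = \{\cat B\}$: from $\cat B \in \Supph(g_W)$ we get
\[
\Supph(g_W \otimes E_{\cat B}) = \Supph(g_W) \cap \{\cat B\} = \{\cat B\} = \Supph(E_{\cat B}),
\]
and since the homological support detects vanishing on pure-injectives (indeed $\Supph$ detects vanishing of $E_{\cat B}$, which is nonzero), this forces $g_W \otimes E_{\cat B} \neq 0$. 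More importantly, I expect that $E_{\cat B}$ is a retract of $g_W \otimes E_{\cat B}$, or at least that $E_{\cat B} \in \Loco{g_W}$; the cleanest route is to invoke that $g_W$ acts as the identity on the localizing ideal it generates and that $E_{\cat B}$, having homological support inside $W$ after the appropriate localization, lies in $\Loco{g_W} = \im(g_W \otimes -)$. Granting this, write $E_{\cat B} \simeq g_W \otimes E_{\cat B}$ (up to the idempotent decomposition). Then
\[
[t, E_{\cat B}] \simeq [t, g_W \otimes E_{\cat B}] \simeq [g_W \otimes t, E_{\cat B}] = [0, E_{\cat B}] = 0,
\]
using that $g_W$ is a $\otimes$-idempotent with the standard internal-hom adjunction $[t, g_W \otimes (-)] \simeq [g_W \otimes t, -]$ (this holds because $g_W$ is an idempotent and $f_V$, $e_U$ satisfy such mixed adjunctions). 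Hence $\cat B \notin \Supph(t)$, which is what we wanted.

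The main obstacle I anticipate is justifying the step $E_{\cat B} \in \Loco{g_W}$, equivalently $g_W \otimes E_{\cat B} \simeq E_{\cat B}$. This should follow from the established theory: the localizing ideal $\Loco{g_W}$ consists exactly of those $s \in \cat T$ with $\Supph(s) \subseteq \phi\inv(W)$ (or at least contains all such pure-injectives), and we have just computed $\Supph(E_{\cat B}) = \{\cat B\} \subseteq \phi\inv(W)$. In the weakly Noetherian setting this is \cite[Lemma~3.8]{BarthelHeardSander23a} or its surrounding results; here one needs the analogue for weakly visible $W$, which \cref{lem:Supph-gW} and the tensor-product formula should provide. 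Alternatively, one can argue directly with the localization triangle $g_W \otimes E_{\cat B} \to E_{\cat B} \to L E_{\cat B}$ associated to the idempotent $g_W$ and show $L E_{\cat B} = 0$ by checking its homological support is empty, using $\Supph(L E_{\cat B}) \subseteq \Supph(E_{\cat B}) \setminus \phi\inv(W) = \emptyset$ and the detection property of $\Supph$. That latter argument seems the most robust and avoids any weakly Noetherian hypothesis.
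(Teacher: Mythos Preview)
Your setup is correct and you identify the two ingredients the paper also uses: that $\cat B \in \phi^{-1}(W) = \Supph(g_W)$ by \cref{lem:Supph-gW}, and the tensor-product formula for $\Supph$. But you then take a detour through $E_{\cat B}$ and internal homs that introduces genuine gaps. First, the adjunction $[t, g_W \otimes (-)] \simeq [g_W \otimes t, -]$ does not hold in general: the correct identity is $[g_W \otimes t, -] \simeq [t, [g_W, -]]$, and $[g_W, -]$ is not $g_W \otimes -$ since $g_W$ is neither dualizable nor a (co)localization idempotent in the required sense. Second, there is no ``localization triangle'' $g_W \otimes E_{\cat B} \to E_{\cat B} \to L E_{\cat B}$: the idempotent $g_W = e_U \otimes f_V$ admits no canonical map to or from $\unit$, so your alternative argument via that triangle does not get off the ground.

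The paper's proof avoids $E_{\cat B}$ entirely and is a one-liner once you have the ingredients you already wrote down. From $\cat B \in \Supph(t)$ and $\cat B \in \Supph(g_W)$ the tensor-product formula gives $\cat B \in \Supph(t \otimes g_W)$, hence $t \otimes g_W \neq 0$; since this holds for every weakly visible $W \ni \cat P$, we conclude $\cat P \in \Supp(t)$. In your contrapositive language: $t \otimes g_W = 0$ forces $\Supph(t) \cap \Supph(g_W) = \emptyset$, and since $\cat B \in \Supph(g_W)$ you get $\cat B \notin \Supph(t)$ immediately. You had the tensor-product formula in hand but applied it to $g_W \otimes E_{\cat B}$ rather than to $g_W \otimes t$; the latter is all that is needed.
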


\begin{proof}
If $\cat B\in\Supph(t)$ then for any weakly visible subset $W\ni\cat P\coloneqq\phi(\cat B)$ we have~$\cat B\in\phi\inv(W)=\Supph(g_W)$ by \cref{lem:Supph-gW} and hence $\cat B\in\Supph(t\otimes g_W)$ in view of the tensor-product formula. In particular, $t\otimes g_W\neq0$. This is true for every weakly visible subset $W$ containing $\cat P$, so $\cat P\in\Supp(t)$.
\end{proof}

\begin{Lem}\label{lem:Supp-EB}
For every $\cat B \in \Spch(\cat T^c)$ we have $\Supp(E_{\cat B})=\{\phi(\cat B)\}$.
\end{Lem}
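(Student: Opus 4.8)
The plan is to establish the two inclusions of $\Supp(E_{\cat B}) = \{\phi(\cat B)\}$ separately; throughout I write $\cat P \coloneqq \phi(\cat B)$. The inclusion $\{\cat P\} \subseteq \Supp(E_{\cat B})$ is immediate: it is \cref{lem:Supph-inclusion} applied to $t = E_{\cat B}$, using $\Supph(E_{\cat B}) = \{\cat B\}$. So all the work is in the reverse inclusion $\Supp(E_{\cat B}) \subseteq \{\cat P\}$.

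The key step will be to prove that $g_W \otimes E_{\cat B} \cong E_{\cat B}$ for every weakly visible subset $W = U \cap V^c$ of $\Spc(\cat T^c)$ that contains $\cat P$. Granting this, two applications of \cref{prop:Supp-prop}(f) (to $e_U$ and to $f_V$) give $\Supp(E_{\cat B}) = \Supp(e_U \otimes f_V \otimes E_{\cat B}) = \Supp(E_{\cat B}) \cap U \cap V^c \subseteq W$. Since $\cat P$ lies in $\supp(a) \cap \gen(\cat P)$ for every compact $a \notin \cat P$, and each such set is weakly visible, intersecting the inclusion just obtained over the family $\{\supp(a) \cap \gen(\cat P)\}_{a \in \cat T^c,\, a \notin \cat P}$ — whose intersection is $\{\cat P\}$, as recorded just before \cref{def:Supp} — yields $\Supp(E_{\cat B}) \subseteq \{\cat P\}$, as desired.

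To prove the key step I would write $g_W = e_U \otimes f_V$ and reduce to showing $f_U \otimes E_{\cat B} = 0$ and $e_V \otimes E_{\cat B} = 0$: given these, the localization triangles $e_U \otimes E_{\cat B} \to E_{\cat B} \to f_U \otimes E_{\cat B}$ and $e_V \otimes E_{\cat B} \to E_{\cat B} \to f_V \otimes E_{\cat B}$ degenerate to isomorphisms $e_U \otimes E_{\cat B} \cong E_{\cat B} \cong f_V \otimes E_{\cat B}$, and hence $g_W \otimes E_{\cat B} \cong e_U \otimes f_V \otimes E_{\cat B} \cong E_{\cat B}$. Now for any Thomason subset $Y$ one has $e_Y = g_Y$ and $f_Y = g_{Y^c}$, so \cref{lem:Supph-gW} together with the tensor-product formula for homological support gives
\[
\Supph(f_U \otimes E_{\cat B}) = \phi\inv(U)^c \cap \{\cat B\}, \qquad \Supph(e_V \otimes E_{\cat B}) = \phi\inv(V) \cap \{\cat B\}.
\]
Both sets are empty, because $\cat P \in W \subseteq U$ forces $\cat B \in \phi\inv(U)$ while $\cat P \in W \subseteq V^c$ forces $\cat B \notin \phi\inv(V)$. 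Since the homological support detects vanishing, it follows that $f_U \otimes E_{\cat B} = 0$ and $e_V \otimes E_{\cat B} = 0$.

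The one input here that is not spelled out in the excerpt is that $\Supph(t) = \emptyset$ implies $t = 0$, and this is the main point to be careful about; it is a standard property of the homological support and I would cite it from \cite{Balmer20_bigsupport} (see also \cite{BarthelHeardSander23a}). Everything else is a formal computation with the idempotents $e_Y$, $f_Y$, $g_W$ and their homological supports, so I anticipate no further difficulty beyond bookkeeping with the specialization conventions (so that ``$\cat P \in W \subseteq V^c$'' genuinely yields ``$\cat B \notin \phi\inv(V)$'').
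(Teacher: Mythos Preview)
Your proof is correct and follows essentially the same approach as the paper's: both rely on the tensor-product formula for $\Supph$, the detection property of the homological support (the paper cites \cite[Theorem~1.8]{Balmer20_bigsupport} for this), and the description of $\{\cat P\}$ as the intersection $\bigcap_{a\notin\cat P}\supp(a)\cap\gen(\cat P)$. The only organizational differences are that you dispatch the inclusion $\cat P\in\Supp(E_{\cat B})$ immediately via \cref{lem:Supph-inclusion}, whereas the paper deduces it at the end from the isomorphisms $E_{\cat B}\simeq e_{\supp(a)}\otimes f_{Y_{\cat P}}\otimes E_{\cat B}$ already established; and the paper proves $\cat P$-locality by checking $[x,E_{\cat B}]=0$ for compact $x\in\cat P$ directly from the definition of $\Supph$, rather than invoking the detection property for $f_U\otimes E_{\cat B}$.
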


\begin{proof}
Let $\cat P\coloneqq\phi(\cat B)$. First we show that $E_{\cat B}$ is $\cat P$-local, \ie $E_{\cat B}\simeq E_{\cat B}\otimes f_{Y_{\cat P}}$. For any object $x\in \cat P$ we have $\cat P\notin\supp(x)$. By \cref{lem:Supph-inclusion} we have $\cat B\notin\Supph(x)$, that is, $[x,E_{\cat B}]=0$. This holds for every $x\in\cat P$, so $E_{\cat B}\in\Loc{\cat P}^{\perp}=f_{Y_{\cat P}}\otimes\cat T$ is $\cat P$-local. Thus $\Supp(E_{\cat B})\subseteq \gen(\cat P)$. On the other hand, we claim that $E_{\cat B}\in\Loco{e_{\supp(a)}}$ for any object $a\notin\cat P$. Indeed, we have
\begin{align*}
a\notin\cat P & \iff \cat P\in \supp(a) \\
& \iff \cat B\notin \phi\inv(\supp(a)^c)= \Supph(f_{\supp(a)}) && \text{by \cref{lem:Supph-gW}} \\
& \iff E_{\cat B} \otimes f_{\supp(a)}=0 && \\
& \iff E_{\cat B} \in \Loco{e_{\supp(a)}} && \text{by \eqref{eq:kerfY}}
\end{align*}
where the second-to-last equivalence is due to \cite[Theorem~1.8]{Balmer20_bigsupport} and the fact that $f_{\supp(a)}$ is a (weak) ring object. Therefore we have $E_{\cat B}\otimes e_{\supp(a)}\simeq E_{\cat B}$ for any $a\notin\cat P$. Invoking the half-tensor formula (\cref{lem:half-tensor}) we obtain
\[
\Supp(E_{\cat B})\subseteq \bigcap_{a\notin\cat P}\supp(a)=\overline{\{\cat P\}}.
\]
Therefore
\[
\Supp(E_{\cat B})\subseteq \overline{\{\cat P\}}\cap\gen(\cat P)=\{\cat P\}.
\]
It remains to prove $\cat P\in\Supp(E_{\cat B})$. From what we have shown it follows that $0\neq E_{\cat B}\simeq E_{\cat B}\otimes f_{Y_{\cat P}}\simeq E_{\cat B}\otimes e_{\supp(a)} \otimes f_{Y_{\cat P}}	$ for any $a\notin \cat P$. By \cref{rem:Supp-def} we conclude that $\cat P\in\Supp(E_{\cat B})$.
\end{proof}

\begin{Exa}
By \cite[Corollary~3.6]{BalmerCameron21}, the Morava K-theory spectrum $K(p,n)$, for a prime $p$ and $0\le n\le \infty$, is isomorphic to $E_{\cat B_{p,n}}$, where $\cat B_{p,n}\in\Spch(\SHc)$ is the homological prime corresponding to the Balmer prime $\cat P_{p,n}\in\Spc(\SHc)$. We then have $\Supp(K(p,n))=\{\cat P_{p,n}\}$ by \cref{lem:Supp-EB}. In particular, the mod-$p$ Eilenberg-Maclane spectrum $\HFp=K(p,\infty)$ is supported on the singleton $\{\cat P_{p,\infty}\}$.
\end{Exa}

\begin{Cor}\label{cor:surjectivity}
If $\cat T$ satisfies the detection property then the map in \cref{def:strat} is surjective.
\end{Cor}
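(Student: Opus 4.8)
The plan is to produce, for each localizing closed subset $Y\subseteq\Spc(\cat T^c)$, a localizing ideal $\cat L$ with $\Supp(\cat L)=Y$; the natural candidate is
\[
\cat L_Y\coloneqq\SET{t\in\cat T}{\Supp(t)\subseteq Y}.
\]
The first task is to check that $\cat L_Y$ is a localizing ideal of $\cat T$. Closure under (de)suspension is \cref{prop:Supp-prop}(b); the two-out-of-three property for exact triangles follows from \cref{prop:Supp-prop}(c) after rotating the triangle so that the object in question appears as the cone of the other two; and the ideal property is immediate from $\Supp(s\otimes t)\subseteq\Supp(t)$ in \cref{prop:Supp-prop}(e). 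The only step that genuinely uses the hypothesis is closure under arbitrary coproducts. Given objects $t_i\in\cat L_Y$ and a point $\cat P\notin Y$, the fact that $Y$ is localizing closed and contains each $\Supp(t_i)$ produces a weakly visible subset $W\ni\cat P$ with $W\cap\Supp(t_i)=\emptyset$ for all $i$; then $\Supp(g_W\otimes t_i)=W\cap\Supp(t_i)=\emptyset$ by two applications of \cref{prop:Supp-prop}(f), so the detection property forces $g_W\otimes t_i=0$. Hence $g_W\otimes\coprod_i t_i=\coprod_i(g_W\otimes t_i)=0$, so $\cat P\notin\Supp(\coprod_i t_i)$, and therefore $\Supp(\coprod_i t_i)\subseteq Y$.

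It then remains to compute $\Supp(\cat L_Y)$. The inclusion $\Supp(\cat L_Y)\subseteq Y$ holds by construction. For the reverse inclusion I would invoke the pure-injectives $E_{\cat B}$: given $\cat P\in Y$, surjectivity of $\phi\colon\Spch(\cat T^c)\to\Spc(\cat T^c)$ furnishes some $\cat B\in\phi\inv(\cat P)$, and \cref{lem:Supp-EB} gives $\Supp(E_{\cat B})=\{\cat P\}\subseteq Y$; thus $E_{\cat B}\in\cat L_Y$ and consequently $\cat P\in\Supp(E_{\cat B})\subseteq\Supp(\cat L_Y)$. This yields $\Supp(\cat L_Y)=Y$, exhibiting $Y$ in the image of the map in \cref{def:strat}, so that map is surjective.

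The one non-formal step is the closure of $\cat L_Y$ under coproducts; this is precisely where the detection hypothesis is indispensable, entering through the identity $\Supp(t\otimes g_W)=\Supp(t)\cap W$ combined with the vanishing criterion $\Supp(-)=\emptyset\Rightarrow(-)=0$. (Equivalently, one can package that step as the formula $\Supp(\coprod_i t_i)=\overline{\bigcup_i\Supp(t_i)}^{\loc}$, valid whenever $\cat T$ has the detection property, together with the localizing closedness of $Y$.) Everything else is bookkeeping with the formal properties assembled in \cref{prop:Supp-prop} and \cref{prop:Supp-loc-closed}, plus the single geometric input \cref{lem:Supp-EB} producing objects with a prescribed one-point support.
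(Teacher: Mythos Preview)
Your proof is correct. The paper takes a slightly more direct route: rather than defining the maximal candidate $\cat L_Y=\SET{t}{\Supp(t)\subseteq Y}$ and verifying it is a localizing ideal, it simply \emph{generates} a localizing ideal from the pure-injectives,
\[
\cat L=\Loco{E_{\cat B_{\cat P}}\mid \cat P\in Y},
\]
and then computes $\Supp(\cat L)=\overline{\bigcup_{\cat P\in Y}\Supp(E_{\cat B_{\cat P}})}^{\loc}=\overline{Y}^{\loc}=Y$ using the formula $\Supp(\Loco{\cat E})=\overline{\Supp(\cat E)}^{\loc}$ (valid under the detection property; this is the analogue of \cref{prop:suppbikloc}(c), quoted from \cite[Theorem~4.7(4)]{BillySanders17pp}). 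Both arguments rest on exactly the same two ingredients---\cref{lem:Supp-EB} to produce objects with prescribed one-point support, and the detection property to control supports of coproducts/localizing ideals---so the difference is purely organizational. Your version has the minor advantage of making explicit where detection enters (closure of $\cat L_Y$ under coproducts), while the paper's version avoids the bookkeeping of checking the localizing-ideal axioms by hand. You even anticipate the paper's packaging in your final parenthetical remark.
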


\begin{proof}
Let $F\subseteq\Spc(\cat T^c)$ be a localizing closed subset. Choosing a homological prime $\cat B_{\cat P}\in\phi\inv(\{\cat P\})$ for every $\cat P\in F$, by \cref{lem:Supp-EB} and \cite[Theorem~4.7(4)]{BillySanders17pp} we have
\[
\Supp(\Loco{E_{\cat B_{\cat P}}\mid \cat P\in F})=\overline{\bigcup_{\cat P\in F}\Supp(E_{\cat B_{\cat P}})}^{\loc}=\overline{F}^{\loc}=F. \qedhere
\]
\end{proof}

\begin{Rem}
In \cite[Lemma~3.4]{BarthelHeardSander23b} it was proved that if $\Spc(\cat T^c)$ is weakly Noetherian then the map above is surjective (without assuming the detection property).
\end{Rem}

\begin{Exa}
The Balmer spectrum $\Spc(\SHcp)$ is not weakly Noetherian (\cref{exa:SH-non-weakly-noetherian}) but $\SHp$ satisfies the detection property (\cref{prop:SHp-detection}). Thus every localizing closed subset of $\Spc(\SHcp)$ can be realized as the support of some localizing ideal of $\SHcp$. Moreover, a subset $S\subseteq\Spc(\SHcp)$ is localizing closed if and only if either $\cat P_{p,\infty}\in S$ or $\cat P_{p,\infty}\notin S$ and $S$ is finite. This follows from that fact that the Thomason subsets of $\Spc(\SHcp)$ are of the form $\overline{\{\cat P_{p,i}\}}$ for $0\le i < \infty$; see \cref{exa:SH-non-weakly-noetherian}.
\end{Exa}

\begin{Rem}
The following result indicates that the weakly Noetherian hypothesis in \cite[Theorem~4.7]{BarthelHeardSander23a} is unnecessary.
\end{Rem}

\begin{Prop}\label{prop:stratified-supph}
If $\cat T$ is stratified then $\phi(\Supph(t))=\Supp(t)$ for any $t\in\cat T$.
\end{Prop}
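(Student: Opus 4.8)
The inclusion $\phi(\Supph(t)) \subseteq \Supp(t)$ is already \cref{lem:Supph-inclusion}, so the plan is to establish the reverse inclusion $\Supp(t) \subseteq \phi(\Supph(t))$ using the stratification hypothesis. I would fix $\cat P \in \Supp(t)$, use surjectivity of $\phi$ to choose a homological prime $\cat B \in \Spch(\cat T^c)$ with $\phi(\cat B) = \cat P$, and then aim to show $\cat B \in \Supph(t)$, \ie $[t,E_{\cat B}] \neq 0$.

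The first step is to prove that $E_{\cat B}$ already lies in $\Loco{t}$. Here I would exploit stratification through the following identification: the full subcategory $\cat N \coloneqq \SET{s \in \cat T}{\Supp(s) \subseteq \Supp(\Loco{t})}$ is a localizing ideal. Indeed, $\Supp(\Loco{t})$ is localizing closed by \cref{prop:Supp-loc-closed}(b), so closure under coproducts follows from \cite[Theorem~4.7(4)]{BillySanders17pp}, closure under triangles from \cref{prop:Supp-prop}(c), and the ideal property from \cref{prop:Supp-prop}(e). Since $t \in \cat N$ and $\Loco{t} \subseteq \cat N$, these two localizing ideals have the same support, so injectivity of the support assignment forces $\cat N = \Loco{t}$. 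As $\Supp(E_{\cat B}) = \{\cat P\} \subseteq \Supp(t) \subseteq \Supp(\Loco{t})$ by \cref{lem:Supp-EB}, I conclude $E_{\cat B} \in \cat N = \Loco{t}$.

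The second step is a formal argument with the internal hom. Assuming, for contradiction, that $[t,E_{\cat B}] = 0$, I would consider $\cat K \coloneqq \SET{s \in \cat T}{[s,E_{\cat B}] = 0}$. Because $[-,E_{\cat B}]$ is exact and sends coproducts to products, $\cat K$ is a localizing subcategory; because $[s \otimes x, E_{\cat B}] \cong [x,[s,E_{\cat B}]]$ for every $x \in \cat T$, it is an ideal. Since $t \in \cat K$ we get $\Loco{t} \subseteq \cat K$, hence $E_{\cat B} \in \cat K$, \ie $[E_{\cat B},E_{\cat B}] = 0$. But $E_{\cat B} \neq 0$ (for instance $\Supp(E_{\cat B}) = \{\cat P\} \neq \emptyset$ while $\Supp(0) = \emptyset$ by \cref{prop:Supp-prop}(a)), so its identity morphism yields a nonzero map $\unit \to [E_{\cat B},E_{\cat B}]$, a contradiction. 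Therefore $\cat B \in \Supph(t)$, and $\cat P = \phi(\cat B) \in \phi(\Supph(t))$, which finishes the argument.

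The only place where anything beyond bookkeeping occurs is the identification $\Loco{t} = \SET{s}{\Supp(s) \subseteq \Supp(\Loco{t})}$, and this is where I expect the (mild) obstacle to lie: one must carefully verify that the candidate subcategory is genuinely a localizing ideal and that its support is exactly $\Supp(\Loco{t})$ before invoking injectivity. Everything else — the tensor–hom manipulation and the inputs from \cref{lem:Supph-inclusion}, \cref{lem:Supp-EB}, and \cref{prop:Supp-prop} — is routine.
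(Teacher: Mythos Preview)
Your proof is correct and follows essentially the same route as the paper's own argument: use \cref{lem:Supph-inclusion} for one inclusion, then for $\cat P\in\Supp(t)$ lift to $\cat B$, invoke \cref{lem:Supp-EB} and stratification to place $E_{\cat B}$ in $\Loco{t}$, and deduce $[t,E_{\cat B}]\neq 0$ from $[E_{\cat B},E_{\cat B}]\neq 0$. The paper compresses your two steps into a single sentence (``$\Supp(E_{\cat B})\subseteq\Supp(t)$ and thus $E_{\cat B}\in\Loco{t}$ due to stratification, which implies $[t,E_{\cat B}]\neq 0$ since $[E_{\cat B},E_{\cat B}]\neq 0$''); your version simply unpacks the two implications via the auxiliary subcategories $\cat N$ and $\cat K$.
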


\begin{proof}
By \cref{lem:Supph-inclusion} the inclusion $\phi(\Supph(t)) \subseteq \Supp(t)$ always holds. To prove the other inclusion, let $\cat P\in\Supp(t)$ and choose any $\cat B\in\phi\inv(\{\cat P\})$. In light of \cref{lem:Supp-EB}, we have $\Supp(E_{\cat B})=\{\cat P\}\subseteq\Supp(t)$ and thus $E_{\cat B}\in\Loco{t}$ due to stratification, which implies $[t,E_{\cat B}]\neq0$ since $[E_{\cat B},E_{\cat B}]\neq0$. This completes the proof.
\end{proof}

\begin{Thm}\label{thm:stratified-weakly-noetherian}
If $\cat T$ is stratified then $\Spc(\cat T^c)$ is weakly Noetherian.
\end{Thm}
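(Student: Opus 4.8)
The plan is to show directly that every point $\cat P\in\Spc(\cat T^c)$ is weakly visible, which is exactly the assertion that $\Spc(\cat T^c)$ is weakly Noetherian. I would do this by realizing the complement $\Spc(\cat T^c)\setminus\{\cat P\}$ as the tensor triangular support of a suitable localizing ideal. Since the support of a localizing ideal is always localizing closed (\cref{prop:Supp-loc-closed}), this makes $\{\cat P\}$ localizing open; and as the weakly visible subsets form a basis for the localizing topology, any nonempty localizing-open singleton must itself be weakly visible, so we would be done.

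The crux is the choice of ideal. For a fixed $\cat P$ I would take
\[
\cat L_{\cat P}\coloneqq\SET{t\in\cat T}{\Supph(t)\cap\phi\inv(\cat P)=\emptyset},
\]
using the homological support $\Supph$ and the comparison map $\phi\colon\Spch(\cat T^c)\to\Spc(\cat T^c)$. It is important that this is phrased via $\Supph$ rather than via $\Supp$ directly: the naive candidate $\SET{t\in\cat T}{\cat P\notin\Supp(t)}$ need not be closed under coproducts, since $\cat P$ can lie in the localizing closure of $\bigcup_i\Supp(t_i)$ without lying in any individual $\Supp(t_i)$. By contrast $\cat L_{\cat P}$ is visibly a localizing ideal: $\Supph$ is suspension-invariant, satisfies $\Supph(c)\subseteq\Supph(a)\cup\Supph(b)$ for an exact triangle $a\to b\to c\to\Sigma a$, and satisfies $\Supph(\coprod_i t_i)=\bigcup_i\Supph(t_i)$ — all because $[-,E_{\cat B}]$ is exact and sends coproducts to products — while the tensor-product formula $\Supph(t_1\otimes t_2)=\Supph(t_1)\cap\Supph(t_2)$ gives closure under $\otimes$.

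It then remains to check $\Supp(\cat L_{\cat P})=\Spc(\cat T^c)\setminus\{\cat P\}$. For the inclusion ``$\subseteq$'', stratification lets me apply \cref{prop:stratified-supph}: if $t\in\cat L_{\cat P}$ then $\Supp(t)=\phi(\Supph(t))$, and $\cat P\in\phi(\Supph(t))$ would force $\Supph(t)\cap\phi\inv(\cat P)\neq\emptyset$, contradicting $t\in\cat L_{\cat P}$; hence $\cat P\notin\Supp(t)$. For ``$\supseteq$'', take any $\cat Q\neq\cat P$; surjectivity of $\phi$ gives some $\cat C\in\phi\inv(\cat Q)$, and since $\phi\inv(\cat Q)$ and $\phi\inv(\cat P)$ are disjoint we get $\Supph(E_{\cat C})=\{\cat C\}\subseteq\Spch(\cat T^c)\setminus\phi\inv(\cat P)$, so $E_{\cat C}\in\cat L_{\cat P}$; by \cref{lem:Supp-EB} we have $\Supp(E_{\cat C})=\{\cat Q\}$, so $\cat Q\in\Supp(\cat L_{\cat P})$. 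Now \cref{prop:Supp-loc-closed} tells us that $\Supp(\cat L_{\cat P})=\Spc(\cat T^c)\setminus\{\cat P\}$ is localizing closed, so $\{\cat P\}$ is localizing open and therefore weakly visible, completing the proof. The only real obstacle is finding the right ideal — in particular recognizing that it must be defined through $\Supph$ in order to remain closed under coproducts — after which every step is routine, with stratification entering only through \cref{prop:stratified-supph}.
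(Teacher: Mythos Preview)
Your proof is correct and rests on the same ingredients as the paper's: the objects $E_{\cat B}$, the fact that $\Supph$ respects coproducts, \cref{lem:Supp-EB}, and the key identification $\Supp=\phi\circ\Supph$ from \cref{prop:stratified-supph}. The execution is slightly different, though. The paper goes straight for the characterization of \cref{rem:loc-top}: given an \emph{arbitrary} subset $W\subseteq\Spc(\cat T^c)$, it builds a single object $t_W\coloneqq\coprod_{\cat P\in W}E_{\cat B_{\cat P}}$, observes that $\Supph(t_W)=\{\cat B_{\cat P}\mid\cat P\in W\}$ (again because $\Supph$ respects coproducts), and concludes via \cref{prop:stratified-supph} that $\Supp(t_W)=W$ is localizing closed. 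This avoids the localizing-ideal machinery entirely and handles all subsets at once, whereas you treat singletons by constructing $\cat L_{\cat P}$ and verifying it is a localizing ideal before computing its support. Your insight that one must pass through $\Supph$ to control coproducts is exactly right; the paper simply exploits the same insight more directly by taking the coproduct of the $E_{\cat B}$ themselves rather than carving out an ideal.
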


\begin{proof}
Let $W$ be any subset of $\Spc(\cat T^c)$. It suffices to show that $W$ is localizing closed by \cref{rem:loc-top}. To see this, we choose a $\cat B_{\cat P}\in\phi\inv(\{\cat P\})$ for every $\cat P\in W$ and consider the object $t_W\coloneqq \coprod_{\cat P\in W}E_{\cat B_{\cat P}}$. By \cite[Proposition~4.3(b)]{Balmer20_bigsupport} we have $\Supph(t_W)=\SET{\cat B_{\cat P}}{\cat P \in W}$. It then follows from \cref{prop:stratified-supph} and \cref{prop:Supp-loc-closed}(a) that $W=\Supp(t_W)$ is localizing closed.
\end{proof}

\begin{Rem}
The theorem above shows that the generalization of the Balmer--Favi support to the tensor triangular support does not broaden the scope of the stratification theory developed in \cite{BarthelHeardSander23b}.
\end{Rem}

\begin{Exa}\label{exa:SH-non-stratified}
The Balmer spectrum $\Spc(\SH^c)$ is not weakly Noetherian and therefore the stable homotopy category $\SH$ is not stratified; \cf \cref{rem:SH-ltg}.
\end{Exa}

\section{Comparison of support theories}\label{sec:comparison}
Our final goal is to study the relation between the canonical BIK support and the tensor triangular support for a rigidly-compactly generated tensor triangulated category $\cat T$, via the comparison map $\rho:\Spc(\cat T^c)\to\Spech(\End^*_{\cat T}(\unit))$ introduced in \cite{Balmer10a}.

\begin{Rem}\label{rem:BIK-finite}
Recall from \cref{exa:graded-algebraic-local} that for a prime ideal $\frakp\in\Spech(\End^*_{\cat T}(\unit))$ the BIK localization functor $L_{\cal Z(\frakp)}$ is the finite localization functor associated to the Thomason subset $\rho\inv(\cal Z(\frakp))$. Let $\fraka=(x_1,\ldots,x_n)$ be a finitely generated homogeneous ideal of $\End^*_{\cat T}(\unit)$ with homogeneous generators $\{x_i\}_{1\le i\le n}$. We then have
\[
\cat T_{\cal V(\fraka)}=\Loc{\kos x{\fraka}\mid x\in\cat T^c}=\Loco{\kos \unit{\fraka}}=\Loco{\bigotimes_{1\le i\le n}\cone(x_i)}
\]
where the first equality is due to \cref{prop:bikalgebraic}(a) and the second equality is explained in the proof of \cref{prop:suppbiksmash}. It follows that the BIK colocalization functor $\varGamma_{\cal V(\fraka)}$ is the finite colocalization functor associated to the Thomason subset
\[
\bigcap_{1\le i\le n}\supp(\cone(x_i))=\bigcap_{1\le i\le n}\rho\inv(\cal V(x_i))=\rho\inv(\cal V(\fraka)).
\]
Therefore
\begin{equation}\label{eq:BIK-finite}
\varGamma_{\cal V(\fraka)}L_{\cal Z(\frakp)}t\simeq e_{\rho\inv(\cal V(\fraka))}\otimes f_{\rho\inv(\cal Z(\frakp))}\otimes t
\end{equation}
for every $t\in \cat T$.
\end{Rem}

\begin{Thm}\label{thm:bikcomparison}
Let $\cat T$ be a rigidly-compactly generated tensor triangulated category. Consider the comparison map $\rho:\Spc(\cat T^c)\to\Spech(\End^*_{\cat T}(\unit))$. Then:
\begin{enumerate}
\item $\rho(\Supp(t))\subseteq\SuppBIK(t)$ for every $t\in\cat T$.
\item If $\rho$ is a homeomorphism then $\rho(\Supp(t))=\SuppBIK(t)$ for every $t\in\cat T$.
\end{enumerate}
\end{Thm}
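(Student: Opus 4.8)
The plan is to translate both $\SuppBIK(t)$ and $\Supp(t)$ into statements about the Balmer idempotents $e_{\rho\inv(\cal V(\fraka))}$ and $f_{\rho\inv(\cal Z(\frakp))}$ via the identification \eqref{eq:BIK-finite}, and then read off the comparison from the characterization of $\Supp(t)$ in \cref{rem:Supp-def}. Throughout write $R=\End^*_{\cat T}(\unit)$. The ingredients I would lean on are: $\rho$ is continuous and, by \cref{exa:graded-algebraic-local}, $\rho\inv(\cal V(\fraka))$ is Thomason closed for every finitely generated homogeneous ideal $\fraka$ while $\rho\inv(\cal Z(\frakp))$ is Thomason; the formula \eqref{eq:BIK-finite}, which identifies $\varGamma_{\cal V(\fraka)}L_{\cal Z(\frakp)}t$ with $g_W\otimes t$ for $W=\rho\inv(\cal V(\fraka))\cap\rho\inv(\cal Z(\frakp))^c$; and \cref{rem:Supp-def}, namely that $\cat P\in\Supp(t)$ if and only if $e_{\supp(a)}\otimes f_{Y_{\cat P}}\otimes t\neq 0$ for every compact object $a$ with $\cat P\in\supp(a)$, where $Y_{\cat P}=\gen(\cat P)^c$.

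For part (a), I would fix $\cat P\in\Supp(t)$, put $\frakp=\rho(\cat P)$, and verify the defining condition of \cref{def:SuppBIK}. Given any finitely generated homogeneous $\fraka\subseteq\frakp$, one has $\frakp\in\cal V(\fraka)$ and $\frakp\in\gen(\frakp)$, so $\cat P\in W:=\rho\inv(\cal V(\fraka))\cap\rho\inv(\cal Z(\frakp))^c$, which is weakly visible by the first ingredient above. Since $\cat P\in\Supp(t)$ forces $g_W\otimes t\neq 0$, the formula \eqref{eq:BIK-finite} gives $\varGamma_{\cal V(\fraka)}L_{\cal Z(\frakp)}t\neq 0$; as $\fraka$ was arbitrary, $\frakp\in\SuppBIK(t)$, proving $\rho(\Supp(t))\subseteq\SuppBIK(t)$.

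For part (b), I would use that a homeomorphism $\rho$ is simultaneously a homeomorphism of Hochster duals, hence restricts to an order isomorphism of specialization orders and carries Thomason (closed) subsets to Thomason (closed) subsets. Fix $\frakp\in\SuppBIK(t)$ and let $\cat P=\rho\inv(\frakp)$; by \cref{rem:Supp-def} it suffices to show $e_{\supp(a)}\otimes f_{Y_{\cat P}}\otimes t\neq 0$ for every compact $a$ with $\cat P\in\supp(a)$. For such $a$ the set $\rho(\supp(a))$ is Thomason closed, hence equals $\cal V(\fraka)$ for some finitely generated homogeneous $\fraka$ by \cref{rem:BIK-Thomason-closed}, and $\cat P\in\supp(a)$ translates to $\fraka\subseteq\frakp$; moreover $\rho$ being an order isomorphism gives $\rho(Y_{\cat P})=\gen(\frakp)^c=\cal Z(\frakp)$, i.e.\ $Y_{\cat P}=\rho\inv(\cal Z(\frakp))$. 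Hence $e_{\supp(a)}\otimes f_{Y_{\cat P}}\otimes t\simeq\varGamma_{\cal V(\fraka)}L_{\cal Z(\frakp)}t$ by \eqref{eq:BIK-finite}, and this is nonzero because $\frakp\in\SuppBIK(t)$ and $\fraka\subseteq\frakp$. Therefore $\cat P\in\Supp(t)$ and $\frakp\in\rho(\Supp(t))$, which together with (a) yields the equality.

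The genuine difficulty here is modest — it is essentially the topological bookkeeping. One must be careful that $\rho\inv$ of a Thomason (closed) subset is again Thomason (closed) even without the homeomorphism hypothesis, which is exactly where \cref{exa:graded-algebraic-local} (ultimately \cite{Balmer10a}) is used; and, in part (b), that the homeomorphism really induces a bijection between the Thomason closed subsets of the two spectra and between generalization closures, and that every Thomason closed subset of $\Spc(\cat T^c)$ has the form $\supp(a)$ for a single compact $a$ so that \cref{rem:Supp-def} applies verbatim. Once this dictionary is in place, both inclusions fall out of \eqref{eq:BIK-finite}.
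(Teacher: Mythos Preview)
Your proposal is correct and takes essentially the same approach as the paper: both arguments reduce to the identification \eqref{eq:BIK-finite} and then use that $\rho\inv(\cal V(\fraka))\cap\rho\inv(\cal Z(\frakp))^c$ is a weakly visible neighborhood of $\cat P$ for part (a), and that a homeomorphism $\rho$ induces a bijection of Thomason closed subsets together with $\rho\inv(\cal Z(\frakp))=Y_{\cat P}$ for part (b). The only cosmetic difference is that the paper phrases (a) by contrapositive and (b) as a chain of equivalences $\frakp\notin\SuppBIK(t)\iff\cat P\notin\Supp(t)$, whereas you argue each inclusion directly.
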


\begin{proof}
Let $\cat P\in\Supp(t)$. If $\frakp\coloneqq\rho(\cat P)\notin\SuppBIK(t)$ then by \eqref{eq:BIK-finite} there exists a finitely generated homogeneous ideal $\fraka\subseteq\frakp$ such that
\[
0=\varGamma_{\cal V(\fraka)}L_{\cal Z(\frakp)}t=e_{\rho\inv(\cal V(\fraka))}\otimes f_{\rho\inv(\cal Z(\frakp))}\otimes t.
\]
Since the point $\cat P$ is contained in the weakly visible subset $\rho\inv(\cal V(\fraka))\cap\rho\inv(\cal Z(\frakp))^c$, we have $\cat P\notin\Supp(t)$, which establishes (a). To show (b), let $\frakp\in\Spech(\End^*_{\cat T}(\unit))$ correspond to some $\cat P\in\Spc(\cat T^c)$. Note that $\rho\inv(\cal Z(\frakp))=Y_{\cat P}$ because $\rho$ is a homeomorphism. Then observe that
\begin{align*}
\frakp\notin\SuppBIK(t) & \iff \exists \text{ a Thomason closed }\cal V\ni\frakp: e_{\rho\inv(\cal V)}\otimes f_{\rho\inv(\cal Z(\frakp))}\otimes t=0 \\
& \iff \exists \text{ a Thomason closed }V\ni\cat P: e_{V}\otimes f_{Y_{\cat P}}\otimes t=0 \\
& \iff \cat P\notin\Supp(t). \qedhere
\end{align*}
\end{proof}

\begin{Cor}
For any compact object $x\in\cat T^c$ we have
\[
\rho(\supp(x))\subseteq\SuppBIK(x)\subseteq\Supp_{\End^*_{\cat T}(\unit)} \End^*_{\cat T}(x).
\]
Moreover, if $\rho$ is a homeomorphism then these inclusions are equalities. 
\end{Cor}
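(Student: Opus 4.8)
The plan is to treat the two inclusions separately and then upgrade each to an equality under the homeomorphism hypothesis. Throughout I would write $R\coloneqq\End^*_{\cat T}(\unit)$ and $A\coloneqq\End^*_{\cat T}(x)=\Hom^*_{\cat T}(x,x)$, viewed as an $R$-algebra via the canonical ring map $R\to A$.

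\emph{First inclusion.} This is immediate from results already in hand: since $x$ is compact, \cref{lem:half-tensor} gives $\Supp(x)=\supp(x)$, so $\rho(\supp(x))=\rho(\Supp(x))\subseteq\SuppBIK(x)$ by \cref{thm:bikcomparison}(a). If $\rho$ is a homeomorphism, \cref{thm:bikcomparison}(b) turns this into the equality $\rho(\supp(x))=\SuppBIK(x)$.

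\emph{Second inclusion.} I would start from \cref{thm:suppbik}: choosing any set $\cat G$ of compact generators for $\cat T$ (for instance a skeleton of $\cat T^c$), one has $\SuppBIK(x)\subseteq\bigcup_{y\in\cat G}\Supp_R\Hom^*_{\cat T}(y,x)$. The key observation is that each $\Hom^*_{\cat T}(y,x)$ is a module over $A$ by postcomposition, and that its $R$-module structure is the restriction of this $A$-module structure along $R\to A$. A one-line localization argument then gives $\Supp_R\Hom^*_{\cat T}(y,x)\subseteq\Supp_R A$: if $\Hom^*_{\cat T}(y,x)_{\frakp}\neq 0$, then it is a nonzero module over $A_{\frakp}$, whence $A_{\frakp}\neq 0$. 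Taking the union over $\cat G$ yields $\SuppBIK(x)\subseteq\Supp_R A=\Supp_{\End^*_{\cat T}(\unit)}\End^*_{\cat T}(x)$. (No rigidity of $x$ is needed here.)

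\emph{Equalities when $\rho$ is a homeomorphism.} After the first part it remains to show $\Supp_R A\subseteq\SuppBIK(x)$. Since $x$ is compact, $\supp(x)$ is a (Thomason) closed subset of $\Spc(\cat T^c)$, so its image $\SuppBIK(x)=\rho(\supp(x))$ is closed in $\Spech(R)$; write it as $\cal V(\frakb)$ for the homogeneous radical ideal $\frakb$ it defines. For every homogeneous $s\in\frakb$ we have $\cal V(s)\supseteq\cal V(\frakb)$, hence
\[
\supp(\cone(s))=\rho\inv(\cal V(s))\supseteq\rho\inv(\cal V(\frakb))=\supp(x),
\]
using $\rho\inv(\cal V(s))=\supp(\cone(s))$ from \cite[Theorem~5.3]{Balmer10a} and the bijectivity of $\rho$. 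Thus $x$ lies in the kernel on compacts of the finite localization $\cat T\to\cat T[s\inv]$ inverting $s$ — that is, $x[s\inv]=0$ — and compactness of $x$ then forces $s^n\cdot\mathrm{id}_x=0$ for some $n\geq 1$, so $s\in\sqrt{\Ann_R A}$. Therefore $\frakb\subseteq\sqrt{\Ann_R A}$, and \cref{lem:suppproperties}(a) gives
\[
\Supp_R A\subseteq\cal V(\Ann_R A)=\cal V\bigl(\sqrt{\Ann_R A}\bigr)\subseteq\cal V(\frakb)=\SuppBIK(x),
\]
which combined with the second inclusion yields $\SuppBIK(x)=\Supp_R A$.

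\emph{Main obstacle.} The genuinely non-formal point is the last paragraph, and within it the passage from $\supp(x)\subseteq\supp(\cone(s))$ to ``$s$ acts nilpotently on $x$'': this relies on the tt-geometric fact that $\cat T\to\cat T[s\inv]$ is the finite localization associated to the Thomason subset $\supp(\cone(s))=\rho\inv(\cal V(s))$, together with compactness of $x$ to convert $x[s\inv]=0$ into $s^n\cdot\mathrm{id}_x=0$. Everything else reduces to direct citations and the routine ``module over an $R$-algebra'' localization manipulation.
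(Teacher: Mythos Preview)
Your proof is correct and self-contained, but it differs from the paper's approach in both the second inclusion and the equality step. The paper outsources these to \cite{Lau21pp}: for the second inclusion it invokes the equivalence $\frakp\notin\Supp_R\End^*_{\cat T}(x)\Leftrightarrow L_{\cal Z(\frakp)}(x)=0$ from \cite[(2.2)]{Lau21pp} and then observes that $L_{\cal Z(\frakp)}(x)=0$ forces $\frakp\notin\SuppBIK(x)$; for the equalities under the homeomorphism hypothesis it simply cites \cite[Proposition~2.10]{Lau21pp}. Your route instead stays inside the paper: \cref{thm:suppbik} for the second inclusion (plus the elementary ``module over the $R$-algebra $A=\End^*_{\cat T}(x)$'' localisation argument), and for the equality you argue directly that each homogeneous $s$ in the defining ideal of $\rho(\supp(x))$ acts nilpotently on $A$. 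What your approach buys is self-containment---no appeal to \cite{Lau21pp}---and it makes explicit that the non-formal step reduces to \cref{prop:algebraic-local} (which is what justifies ``$x[s\inv]=0\Rightarrow s^n\cdot\mathrm{id}_x=0$'' for compact $x$; you should make that citation explicit). What the paper's approach buys is brevity: the relevant facts are already packaged in \cite{Lau21pp}.
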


\begin{proof}
The first inclusion is a special case of \cref{thm:bikcomparison}(a). For the second inclusion, let $\frakp\notin\Supp_{\End^*_{\cat T}(\unit)} \End^*_{\cat T}(x)$. By \cite[(2.2)]{Lau23} we have $L_{\cal Z(\frakp)}x=0$ and thus $\frakp\notin\SuppBIK(x)$. This establishes the second inclusion. The equalities follow from \cite[Proposition~2.10]{Lau23}.
\end{proof}

\begin{Rem}
We now give a notion of stratification with respect to the canonical BIK support function which takes values in $\SuppBIK(\cat T) \subseteq \Spech(\End_{\cat T}^*(\unit))$:
\end{Rem}

\begin{Def}\label{def:coh-strat}
We say that $\cat T$ is \emph{cohomologically stratified} if the map
\[
\begin{split}
\big\{ \text{localizing ideals of $\cat T$}\big\} &\to \big\{\text{closed subsets of $\SuppBIK(\cat T)$}\big\} \\
\cat L &\mapsto \SuppBIK(\cat L)
\end{split}
\]
is a bijection, where $\SuppBIK(\cat T)$ is equipped with the subspace topology of the localizing topology on $\Spech(\End_{\cat T}^*(\unit))$.
\end{Def}

\begin{Rem}\label{rem:coh-strat}
The map above is well-defined by \cref{prop:suppbikloc}(b). Moreover, if the comparison map $\rho\colon\Spc(\cat T^c)\to\Spech(\End_{\cat T}^*(\unit))$ is surjective then we have $\SuppBIK(\cat T)=\SuppBIK(\unit)=\Spech(\End_{\cat T}^*(\unit))$ by part (a) of \cref{thm:bikcomparison}.
\end{Rem}

\begin{Exa}\label{exa:rho-surjective}
If $\End_{\cat T}^*(\unit)$ is Noetherian then $\rho$ is surjective by \cite[Theorem~7.3]{Balmer10a} and hence $\SuppBIK(\cat T)=\Spech(\End_{\cat T}^*(\unit))$.
\end{Exa}

\begin{Exa}\label{exa:comparison-DA}
For a commutative ring $A$, the unbounded derived category $\Der(A)$ is rigidly-compactly generated and the derived category $\Derperf(A)$ of perfect complexes is its subcategory of rigid-compact objects. The associated comparison map $\Spc(\Derperf(A)) \to \Spec(A)$ is a homeomorphism by \cite[Proposition~8.1]{Balmer10a}, so we have $\SuppBIK(\Der(A))=\Spec(A)$.
\end{Exa}

\begin{Cor}\label{cor:coh-strat}
If the comparison map $\rho$ is a homeomorphism then $\cat T$ is cohomologically stratified if and only if it is stratified.
\end{Cor}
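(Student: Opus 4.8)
The plan is to deduce the corollary formally from \cref{thm:bikcomparison}(b) by transporting the stratification map along the homeomorphism $\rho$. First I would note that, being a homeomorphism of spectral spaces, $\rho$ carries Thomason subsets bijectively to Thomason subsets (the condition of being a union of closed subsets with quasi-compact complement is purely topological), hence weakly visible subsets to weakly visible subsets, and therefore restricts to a homeomorphism $\Spc(\cat T^c)_{\loc}\xra{\sim}\Spech(\End^*_{\cat T}(\unit))_{\loc}$ of the associated localizing topologies. In particular $S\mapsto\rho(S)$ defines an inclusion-preserving bijection $\Phi$ from the localizing closed subsets of $\Spc(\cat T^c)$ onto the localizing closed subsets of $\Spech(\End^*_{\cat T}(\unit))$.

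Next I would record that the hypothesis on $\rho$ forces $\SuppBIK(\cat T)=\Spech(\End^*_{\cat T}(\unit))$: indeed $\SuppBIK(\cat T)=\SuppBIK(\unit)=\rho(\Supp(\unit))=\rho(\Spc(\cat T^c))$ by \cref{thm:bikcomparison}(b) and \cref{prop:Supp-prop}(a), and $\rho$ is surjective (see also \cref{rem:coh-strat}). Consequently the target of the cohomological stratification map is precisely the set of localizing closed subsets of $\Spech(\End^*_{\cat T}(\unit))$, that is, the codomain of $\Phi$.

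The key point is then the identity
\[
\SuppBIK(\cat L)=\bigcup_{t\in\cat L}\SuppBIK(t)=\bigcup_{t\in\cat L}\rho(\Supp(t))=\rho\Big(\bigcup_{t\in\cat L}\Supp(t)\Big)=\rho(\Supp(\cat L))=\Phi(\Supp(\cat L))
\]
for every localizing ideal $\cat L$ of $\cat T$, where the second equality uses \cref{thm:bikcomparison}(b). Thus the cohomological stratification map $\cat L\mapsto\SuppBIK(\cat L)$ is the composite of the stratification map $\cat L\mapsto\Supp(\cat L)$ with the bijection $\Phi$, so the former is a bijection if and only if the latter is; that is, $\cat T$ is cohomologically stratified if and only if it is stratified.

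There is no essential obstacle here, as the statement is a bookkeeping consequence of \cref{thm:bikcomparison}(b). The only points demanding (mild) care are the two compatibilities used above: that $\rho$ transports the localizing topology to the localizing topology, so that the property of being localizing closed is preserved in both directions, and that $\SuppBIK(\cat T)$ is the whole homogeneous spectrum, so that the subspace topology appearing in \cref{def:coh-strat} is genuinely the localizing topology and $\Phi$ has the right codomain.
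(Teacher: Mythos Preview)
Your argument is correct and follows the same approach as the paper, which simply invokes \cref{rem:coh-strat} to identify $\SuppBIK(\cat T)$ with $\Spech(\End^*_{\cat T}(\unit))$ and then appeals to \cref{thm:bikcomparison}(b). You have merely spelled out explicitly the bookkeeping the paper leaves to the reader, namely that $\rho$ transports the localizing topology and that the BIK stratification map factors through the tensor triangular one via the bijection $\Phi$.
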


\begin{proof}
By \cref{rem:coh-strat} the BIK space of supports $\SuppBIK(\cat T)$ is $\Spech(\End^*_{\cat T}(\unit))$. The result thus follows from \cref{thm:bikcomparison}(b).
\end{proof}

\begin{Exa}
Let $A$ be a commutative ring. By \cref{exa:comparison-DA} we can identify $\Spc(\Derperf(A))$ with $\Spec(A)$ via the comparison map, under which the tensor triangular support and the canonical BIK support coincide, according to \cref{thm:bikcomparison}. For any prime $\frakp=\rho(\cat P)\in\Spec(A)$ and any finitely generated ideal $\fraka\subseteq\frakp$, it follows from \cite[Lemma~5.1]{BillySanders17pp} that
\[
f_{\rho\inv(\cal Z(\frakp))} = f_{Y_{\cat P}} \simeq A_{\frakp} \text{ and } e_{\rho\inv(\cal V(\fraka))} = e_{\supp(\kos \unit{\fraka})}\simeq K^{\infty}(\fraka)
\]
where $K^{\infty}(\fraka)$ is the stable Koszul complex of $\fraka$. Hence for a complex $X\in\Der(A)$ we have
\[
\Supp(X)= \left\{ \frakp\in\Spec(A) \left| 
{\begin{gathered}
K^{\infty}(\fraka)\otimes X_{\frakp} \neq 0 \text{ for any finitely} \\
\text{generated ideal $\fraka$ contained in $\frakp$}
\end{gathered}} \right. \right\}.
\]
This notion of support for complexes over a (non-Noetherian) commutative ring was first proposed and studied in \cite{BillySanders17pp}. The condition $K^{\infty}(\fraka)\otimes X_{\frakp} \neq 0$ is equivalent to $A/\fraka\otimes X_{\frakp} \neq 0$ since $\Loco{K^{\infty}(\fraka)}=\Loco{A/\fraka}$ by \cite[Proposition~5.6]{Greenlees01}. If the ideal $\frakp$ itself is finitely generated then this condition amounts to $\kappa(\frakp)\otimes X \neq 0$. Therefore, when $A$ is Noetherian the tensor triangular support recovers the support defined in \cite{Foxby79}.
\end{Exa}

\begin{Rem}\label{rem:Neeman-D(A)}
Neeman proved that $\Der(A)$ is (cohomologically) stratified whenever $A$ is Noetherian; see \cite[Theorem~2.8]{Neeman92a}. This result was extended to the absolutely flat approximations of topologically Noetherian commutative rings by Stevenson; see \cite[Theorem~4.23]{Stevenson14a}. On the other hand, Neeman \cite{Neeman00} gave an example of a non-Noetherian commutative ring such that the stratification fails. It remains an open question to determine for which commutative rings stratification holds. However, our \cref{cor:coh-strat} and \cref{thm:stratified-weakly-noetherian} show that for any commutative ring $A$, if $\Der(A)$ is stratified then $\Spec(A)$ is necessarily weakly Noetherian.
\end{Rem}

\begin{Rem}
In \cite[Theorem~5.5(3)]{BillySanders17pp} it was shown that if the prime ideals of a commutative ring $A$ satisfy the descending chain condition then $\Der(A)$ has the detection property. This can also be deduced from \cref{thm:bikcomparison} and \cref{cor:dccbik}. In fact, our \cref{cor:dccbik} generalizes \cite[Theorem~5.5(2)]{BillySanders17pp}.
\end{Rem}

\begin{Exa}
For a Noetherian scheme $X$, the derived category $\Derqc(X)$ of complexes of $\cat O_X$-modules with quasi-coherent cohomology is stratified; see \cite[Corollary~5.10]{BarthelHeardSander23b}. However, $\Derqc(X)$ is not cohomologically stratified in general, since the graded endomorphism ring of the unit object in this category, that is, the sheaf cohomology ring $H(X,\cat O_X)$, may not have enough prime ideals when $X$ is nonaffine; see \cite[Remark~8.2]{Balmer10a}, for example.
\end{Exa}

\begin{Exa}\label{exa:stmod}
Let $G$ be a finite group and $k$ a field of characteristic $p>0$ such that $p$ divides the order of $G$. The big stable module category $\StMod(kG)$ is BIK-stratified by $H^*(G,k)$ (\cite[Theorem~10.3]{BensonIyengarKrause11a}). The associated Balmer spectrum $\Spc(\stmod(kG))$ is homeomorphic to $\Proj(H^*(G,k))$, which is a Noetherian space since $H^*(G,k)$ is a Noetherian ring by the Evens-Venkov theorem \cite[II(3.10)]{Benson98}. Note that this BIK-stratification for $\StMod(kG)$ is not canonical since the graded endomorphism ring of the unit for $\StMod(kG)$ is not $H^*(G,k)$ but rather the Tate cohomology ring $\hat{H}^*(G,k)$; see \cite[page~26]{BensonKrause02}. The original statement of Benson--Iyengar--Krause cannot be applied to the canonical action of $\hat{H}^*(G,k)$ on $\StMod(kG)$ since $\hat{H}^*(G,k)$ is rarely Noetherian. In fact, $\hat{H}^*(G,k)$ is Noetherian if and only if the $p$-rank of $G$ is $1$ if and only if $\hat{H}^*(G,k)$ is periodic; see \cite[Lemma~10.1]{BensonIyengarKrause08}.

On the other hand, $\StMod(kG)$ is stratified in the sense of \cref{def:strat} by \cite[Example~7.12]{BarthelHeardSander23b}. In the following we will show that $\StMod(kG)$ is also cohomologically stratified in the sense of \cref{def:coh-strat}. In other words, it is canonically stratified by $\hat{H}^*(G,k)$. Note that this does not follow directly from \cref{thm:bikcomparison}(b) because in this example the comparison map $\rho$ is not a homeomorphism in general, as we shall see below.
\end{Exa}

\begin{Thm}\label{thm:stmod}
Let $G$ be a finite group and $k$ a field of characteristic $p>0$ such that $p$ divides the order of $G$. The stable module category $\StMod(kG)$ is cohomologically stratified.
\end{Thm}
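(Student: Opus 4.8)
The plan is to transport the already-established tensor triangular stratification of $\StMod(kG)$ across the comparison map $\rho\colon\Spc(\stmod(kG))\to\Spech(\hat H^*(G,k))$, circumventing the fact that $\rho$ is not a homeomorphism once the $p$-rank of $G$ exceeds $1$. First I would record the structural input. Since $\Spc(\stmod(kG))$ is homeomorphic to $\Proj(H^*(G,k))$, it is Noetherian, so the tensor triangular support coincides with the Balmer--Favi support and $\StMod(kG)$ is stratified by \cite[Example~7.12]{BarthelHeardSander23b}; thus $\cat L\mapsto\Supp(\cat L)$ is a bijection from localizing ideals onto the collection of \emph{all} subsets of $\Spc(\stmod(kG))$ (recall that the localizing topology of a Noetherian space is discrete). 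The inclusion of graded rings $H^*(G,k)=\hat H^{\ge0}(G,k)\hookrightarrow\hat H^*(G,k)$ induces a map $\Spech(\hat H^*(G,k))\to\Spech(H^*(G,k))$ through which $\rho$ factors the standard open immersion $\Proj(H^*(G,k))\hookrightarrow\Spech(H^*(G,k))$; it follows that $\rho$ is injective, reflects specialization, and satisfies $\rho\inv(\cal Z(\frakp))=Y_{\cat P}$ whenever $\frakp=\rho(\cat P)$. Using that $H^*(G,k)$ is Noetherian and that $\hat H^{>0}(G,k)=H^{>0}(G,k)$, I would check that the Thomason closed subsets of $\Spc(\stmod(kG))$ are exactly the subsets $\rho\inv(\cal V(\fraka))=\supp(\kos\unit{\fraka})$ for $\fraka$ a finitely generated homogeneous ideal of $\hat H^*(G,k)$. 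Finally, one needs that $\rho$ induces a bijection $\Spc(\stmod(kG))\xra{\sim}\SuppBIK(\StMod(kG))=\SuppBIK(\unit)$; this requires the analysis of the homogeneous prime spectrum of the Tate cohomology ring (see \cite{BensonKrause02}), whose point is that the non-Noetherian behaviour of $\hat H^*(G,k)$ lives ``at infinity'' over $\Proj(H^*(G,k))$ and creates neither extra points outside $\im\rho$ nor extra Thomason subsets.

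With this input in hand, I would prove that $\SuppBIK(t)=\rho(\Supp(t))$ for every $t\in\StMod(kG)$. The inclusion $\rho(\Supp(t))\subseteq\SuppBIK(t)$ is \cref{thm:bikcomparison}(a). For the reverse inclusion, the proof of \cref{thm:bikcomparison}(b) carries over: by \eqref{eq:BIK-finite} the condition $\frakp\notin\SuppBIK(t)$ means $e_{\rho\inv(\cal V(\fraka))}\otimes f_{\rho\inv(\cal Z(\frakp))}\otimes t=0$ for some finitely generated homogeneous $\fraka\subseteq\frakp$, and using $\rho\inv(\cal Z(\frakp))=Y_{\cat P}$ together with the fact that $\rho\inv(\cal V(\fraka))$ ranges over all Thomason closed subsets of $\Spc(\stmod(kG))$ containing $\cat P$, \cref{rem:Supp-def} identifies this with the condition $\cat P\notin\Supp(t)$. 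In short, the three places in the proof of \cref{thm:bikcomparison}(b) that invoked ``$\rho$ is a homeomorphism'' are replaced by injectivity of $\rho$, the identity $\rho\inv(\cal Z(\frakp))=Y_{\cat P}$, and the correspondence of Thomason closed subsets, all established above.

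Third, I would conclude. Injectivity of $\rho$ together with the Thomason correspondence shows that each singleton $\{\frakp\}$ with $\frakp=\rho(\cat P)\in\SuppBIK(\StMod(kG))$ is open in the subspace localizing topology: writing $\{\cat P\}=U\cap V^c$ with $U,V$ Thomason in $\Spc(\stmod(kG))$, choosing Thomason $U',V'$ in $\Spech(\hat H^*(G,k))$ with $\rho\inv(U')=U$ and $\rho\inv(V')=V$, and using that $\rho$ maps onto $\SuppBIK(\StMod(kG))$, one gets $U'\cap(V')^c\cap\SuppBIK(\StMod(kG))=\{\frakp\}$. Hence the localizing topology on $\SuppBIK(\StMod(kG))$ is discrete (so this space is weakly Noetherian) and $\rho$ restricts to a bijection between the subsets of $\Spc(\stmod(kG))$ and the closed subsets of $\SuppBIK(\StMod(kG))$ for the localizing topology. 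Since $\cat L\mapsto\Supp(\cat L)$ is a bijection and $\SuppBIK(\cat L)=\rho(\Supp(\cat L))$ by the second step (applied to a coproduct of objects of $\cat L$), the map $\cat L\mapsto\SuppBIK(\cat L)$ of \cref{def:coh-strat} is a bijection, which is the assertion.

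The hard part will be the structural claim in the first step --- describing $\Spech(\hat H^*(G,k))$ accurately enough to see that $\rho$ maps onto the BIK space of supports and that Thomason subsets pull back and push forward correctly. This is the only place requiring genuinely new input beyond the formal machinery of the preceding sections, and it is also what underlies the remark that $\rho$ fails to be a homeomorphism: its source carries the Noetherian Zariski topology of $\Proj(H^*(G,k))$, whereas its target carries a strictly coarser Zariski topology.
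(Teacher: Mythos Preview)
Your approach is essentially the paper's: prove $\SuppBIK(t)=\rho(\Supp(t))$ by salvaging the argument of \cref{thm:bikcomparison}(b) with weaker hypotheses on $\rho$, then transport the known stratification of $\StMod(kG)$ across $\rho$. The paper makes the ``structural input'' you gesture at precise via a case split on the $p$-rank of $G$. When the $p$-rank is $1$, $\hat H^*(G,k)$ is Noetherian and $\rho$ is a homeomorphism of singletons, so \cref{cor:coh-strat} applies directly. When the $p$-rank is at least $2$, the crucial fact (\cite[Proposition~2.4]{BensonKrause02}) is that $\hat H^-(G,k)$ is \emph{nilpotent}, so $\Spech(\iota)\colon\Spech(\hat H^*(G,k))\to\Spech(H^*(G,k))$ is a homeomorphism and in particular $\Spech(\hat H^*(G,k))$ is itself Noetherian. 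Then $\rho$ is an open embedding missing only the unique closed point $\mathfrak m$, whose singleton $\{\mathfrak m\}=\cal V(\mathfrak m)$ is Thomason closed with $\rho^{-1}(\cal V(\mathfrak m))=\emptyset$; this immediately gives $\mathfrak m\notin\SuppBIK(t)$ for all $t$, hence $\SuppBIK(\StMod(kG))=\im\rho$, and the Noetherianity of the target delivers your Thomason correspondence and the identity $\rho^{-1}(\cal Z(\frakp))=Y_{\cat P}$ for free.

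Your closing remark that the target ``carries a strictly coarser Zariski topology'' is a misdiagnosis: in both cases $\rho$ is a homeomorphism onto its image, and the only obstruction to $\rho$ being a homeomorphism is non-surjectivity (one missing point). This does not break your argument, but it means the ``hard part'' you anticipate is simpler than you suggest once the nilpotence of $\hat H^-(G,k)$ is invoked.
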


\begin{proof}
By Rickard \cite{Rickard89} the small stable module category $\stmod(kG)$ is equivalent to the quotient $\Derb(\mmod{kG})/\Derperf(kG)$. Note that the graded endomorphism ring of the unit object of $\cat K\coloneqq\Derb(\mmod{kG})$ is isomorphic to the group cohomology ring. Therefore, we can identify $H^*(G,k)$ with $\End_{\cat K}^*(\unit)$ and $\hat{H}^*(G,k)$ with $\End_{\stmod(kG)}^*(\unit)$; see \cref{exa:stmod}.

Now consider the functor $q\colon \cat K \to \cat K/\Derperf(kG) \simeq \stmod(kG)$. The naturality of the comparision map gives us the following commutative diagram:
\begin{equation}\label{eq:diagram1}
\begin{tikzcd}
\Spc(\stmod(kG)) \ar[d, hook, "\rho"] \ar[r, hook,"\Spc(q)"] & \Spc(\cat K) \ar[d,"\rho_{\cat K}","\simeq"'] \\
\Spech(\hat{H}^*(G,k)) \ar[r,"\Spech(\iota)"] &\Spech(H^*(G,k))
\end{tikzcd}
\end{equation}
where $\Spc(q)$ is an open embedding and $\rho_{\cat K}$ is a homeomorphism by \cite[Proposition~8.5]{Balmer10a}. Moreover, $\iota\colon H^*(G,k) \hook \hat{H}^*(G,k)$ is the first map that appears in \cite[(10.2)]{BensonIyengarKrause11a}, which views $H^*(G,k)$ as a subring of $\hat{H}^*(G,k)$; see also \cite[(2.1)]{BensonKrause02}. It follows that $\rho$ is injective. On the other hand, we have the following commutative diagram from the proof of \cite[Proposition~8.5]{Balmer10a}:
\begin{equation}\label{eq:diagram2}
\begin{tikzcd}
\Spc(\stmod(kG)) \ar[d, "\varphi\inv","\simeq"'] \ar[r,hook,"\Spc(q)"] & \Spc(\cat K) \ar[d,"\rho_{\cat K}","\simeq"'] \\
\Proj(H^*(G,k)) \ar[r,hook] &\Spech(H^*(G,k))
\end{tikzcd}
\end{equation}
in which $\varphi \colon \Proj(H^*(G,k)) \xra{\sim} \Spc(\stmod(kG))$ is the homeomorphism described in \cite[Corollary~5.10]{Balmer05a} and $\Proj(H^*(G,k)) \hook \Spech(H^*(G,k))$ is the canonical open embedding which misses the unique closed point $H^+(G,k)$ in $\Spech(H^*(G,k))$. Combining \eqref{eq:diagram1} and \eqref{eq:diagram2} we obtain a commutative diagram:
\begin{equation}\label{eq:diagram3}
\begin{tikzcd}
\Spc(\stmod(kG)) \ar[d, hook, "\rho"] \ar[r,"\varphi\inv","\simeq"'] & \Proj(H^*(G,k)) \ar[d,hook] \\
\Spech(\hat{H}^*(G,k)) \ar[r,"\Spech(\iota)"] &\Spech(H^*(G,k)).
\end{tikzcd}
\end{equation}

If $\hat{H}^*(G,k)$ is Noetherian then \cite[Theorem~7.3]{Balmer10a} implies that $\rho$ is surjective and hence a bijection. By \cite[Lemma~10.1]{BensonIyengarKrause08} $\hat{H}^*(G,k)$ being Noetherian is equivalent to that the $p$-rank of $G$ equals $1$, which implies that the Krull dimension of $\Spech(H^*(G,k))$ is equal to $1$ by Quillen stratification theorem \cite{Quillen71}. It follows that $\Spc(\stmod(kG))\cong \Proj(H^*(G,k))$ has zero Krull dimension, that is, $\Spc(\stmod(kG))$ is a discrete space. Moreover, since the trivial representation is indecomposable, $\Spc(\stmod(kG))$ is a singleton by \cite[Theorem~2.11]{Balmer07}, which forces $\rho$ to be a homeomorphism. Therefore, $\StMod(kG)$ is cohomologically stratified by \cref{cor:coh-strat}.

If $\hat{H}^*(G,k)$ is not Noetherian (\ie the $p$-rank of $G$ is at least $2$) then the negative part $\hat{H}^-(G,k)$ is nilpotent by \cite[Proposition~2.4]{BensonKrause02}. It follows that
\[
\begin{aligned}
\Spech(\iota) \colon \Spech(\hat{H}^*(G,k)) & \to \Spech(H^*(G,k)) \\
\frakp=\hat{H}^-(G,k)\oplus \frakp^{\ge0} & \mapsto \frakp^{\ge0}
\end{aligned}
\]
is a homeomorphism where $\frakp^{\ge0}$ denotes the nonnegative part of a graded prime $\frakp$. On the other hand, by \eqref{eq:diagram3} the map $\rho \colon \Spc(\stmod(kG)) \hook \Spech(\hat{H}^*(G,k))$ is an open embedding which misses the unique closed point $\mathfrak{m}\coloneqq\hat{H}^{i\neq0}(G,k)$. Since $\Spech(\hat{H}^*(G,k)) \cong \Spech(H^*(G,k))$ is Noetherian, $\cal V(\mathfrak{m})=\{\mathfrak{m}\}$ is Thomason closed. We thus have $e_{\rho\inv(\cal V(\mathfrak{m}))}\otimes f_{\rho\inv(\cal Z(\mathfrak{m}))} = e_{\emptyset}\otimes f_{\Spc(\stmod(kG))} \simeq 0\otimes \unit = 0$ and hence $\mathfrak{m}\notin\SuppBIK(t)$ for every $t\in \StMod(kG)$. It then follows from \cref{thm:bikcomparison}(a) that
\begin{equation}\label{eq:imrho}
\SuppBIK(\cat T)=\SuppBIK(\unit)=\im\rho.
\end{equation}
Now suppose that $\frakp=\rho(\cat P)\in\im\rho$ is a nonclosed point in $\Spech(\hat{H}^*(G,k))$. Note that $\{\frakp\}=\cal V(\frakp)\cap \cal Z(\frakp)^c$, where $\cal V(\frakp)$ is Thomason closed since $\Spech(\hat{H}^*(G,k))$ is Noetherian. It follows that for every $t\in \StMod(kG)$ we have
\[
\frakp\in\SuppBIK(t) \iff e_{\rho\inv(\cal V(\frakp))} \otimes f_{\rho\inv(\cal Z(\frakp))} \otimes t \neq 0 \iff \cat P \in \Supp(t).
\]
Therefore $\rho(\Supp(t))=\SuppBIK(t)$ for all $t\in\StMod(kG)$. From \eqref{eq:imrho} and the fact that $\StMod(kG)$ is stratified, we conclude that $\StMod(kG)$ is cohomologically stratified.
\end{proof}

\begin{Rem}
Our definition of cohomological stratification (\cref{def:coh-strat}) generalizes the one given by \cite[Definition~2.21]{BCHNP25} which requires $\cat T$ to be Noetherian (\cite[Definition~2.9]{BCHNP25}) and the comparison map $\rho$ to be a homeomorphism. Indeed, if $\cat T$ is Noetherian then $\End_{\cat T}^*(\unit)$ is Noetherian and thus every subset of $\SuppBIK(\cat T)$ is localizing closed. Moreover, if $\rho$ is a homeomorphism then $\SuppBIK(\cat T)=\Spech(\End_{\cat T}^*(\unit))$ by \cref{rem:coh-strat}. Note, however, that our \cref{def:coh-strat} does not put any restriction on $\rho$. As \cref{thm:stmod} shows, requiring~$\rho$ to be a homeomorphism would eliminate interesting examples of cohomologically stratified categories in the non-Noetherian context.
\end{Rem}

\raggedbottom 

\bibliographystyle{alpha}\bibliography{supp}

\end{document}